\def\beq{\begin{equation}}
\def\eeq{\end{equation}}
\def\ba{\begin{array}}
\def\ea{\end{array}}
\newtheorem{thm}{Theorem}[section]
\newtheorem{lem}[thm]{Lemma}
\newtheorem{prop}[thm]{Proposition}
\newtheorem{crl}[thm]{Corollary}
\theoremstyle{definition}
\newtheorem{rem}[thm]{Remark}
\theoremstyle{remark}
\numberwithin{equation}{section}
\begin{document}
\allowdisplaybreaks[3]
\pagestyle{plain}   
\title{Hardy-Littlewood-Sobolev inequalities on compact Riemannian manifolds and applications}

\author{Yazhou Han}
\address{Yazhou Han,
Department of Mathematics,College of Science,
China Jiliang University,
Hangzhou, 310018, China}
\email{yazhou.han@gmail.com}

\author  {Meijun Zhu}
\address{ Meijun Zhu, Department of Mathematics,
The University of Oklahoma, Norman, OK 73019, USA}
\email{mzhu@ou.edu}



\begin{abstract}
In this paper we extend  Hardy-Littlewood-Sobolev inequalities on  compact Riemannian manifolds for dimension $n\ne 2$.
As one application, we solve a generalized Yamabe problem on locally  conforamlly flat manifolds via a new designed energy functional and a new variational approach. Even  for the classic  Yamabe problem on locally conformally flat manifolds,   our  approach provides  a new and relatively simpler solution.


\end{abstract}

 \maketitle

\section{Introduction}

%

Curvature equations involving high order derivatives (including $Q-$ curvature equations) and  fully nonlinear curvature  equations (such as $\sigma_k$ operators of Schouten tensor) have been extensively studied in the past decade, and have broad applications in the study of global geometry and topology. See, e.g. \cite{CGY2002}, \cite{GV2003}, \cite{Br2003}, \cite{LL2005}, \cite{GLW2004}, \cite{GV2007}, \cite{DM2008}  and references therein.  All these differential operators, such as Paneitz operators with even powers and $\sigma_k$ operators of Schouten tensor, are introduced as a locally defined operators.

Recently, there have been some interesting results concerning the fractional Yamabe problem, as well as the fractional prescribing curvature problem, see, e.g. \cite{GZ2003},\cite{GM2011}, \cite{GQ2011}, \cite{JLX2011a}-\cite{JX2011} and references therein. In these studies  the notion for the globally defined fractional Paneitz operator $P_\alpha$ (via an integral operator), which is  introduced in \cite{GZ2003}, is used  and has a direct link to singular integral operators (see Caffarelli and Silvestre \cite{CS2007} for a new view point of fractional Laplacian operator).

 Motivated by the globally defined fractional Paneitz operator, as well as the study of sharp Sobolev inequality with negative power by W. Chen, et al \cite{CHLYZ}, Yang and Zhu \cite{YZ2004}, Hang and Yang \cite{HY2004}, Ni and Zhu \cite{NZ1}-\cite{NZ3}, Hang \cite{H2007}, etc. we started to investigate the general extension of Hardy-Littlewood-Sobolev (HLS) inequality. In Dou and Zhu \cite{DZ1}, we established the HLS inequality on the upper half space, and outline the rough idea on the extension of HLS on general manifolds; In Dou and Zhu \cite{DZ2}, a surprising reversed HLS inequality was obtained when the differential order is higher than the dimension. In Zhu \cite{Zhu2014}, a more general prescribing curvature equation on $\mathbb{S}^n $ was introduced and the existence result for antipodally symmetric function was obtained; in particular, the reversed HLS inequality was first used in the study of curvature equations with negative critical Sobolev exponents.  In the same paper, a more general Yamabe type problem was also  introduced for general compact Riemannian manifolds.  In this paper we shall extend the classic HLS inequality as well as the reversed HLS inequality on  compact Riemannian manifolds and provide solution to the general Yamabe problem on locally conformally flat manifolds.

\smallskip

Let $(M^n, g)$ be a given compact  Riemmanian manifold, $\alpha$($ \ne n$)  be a positive parameter and $|x-y|_g$  represent the distance from $x$ to $y$ on $M^n$ under metric $g$.
Introduce the following integral operator:
\begin{align*}
    I_\alpha f(x)=\int_{M^n}\frac{f(y)}{|x-y|_g^{n-\alpha}}dV_y,\label{Operator-main-term}\\
\end{align*}
We first have the following HLS inequality on $(M^n, g)$ for $\alpha<n$:

\begin{prop}\label{prop-sharp-ineq}
 Assume that $\alpha \in (0, n)$,  $1<p<\frac n\alpha$ and $q$ is given by \begin{equation}\label{power} \frac1q=\frac1p-\frac\alpha n,
 \end{equation} then  there is an optimal positive constant $C(\alpha,p,M^n,g)$, such that
\begin{equation}\label{1-1}
    ||I_{\alpha} f||_{L^q(M^n)} \le C(\alpha, p, M^n, g)||f||_{L^p(M^n)}.
\end{equation}
holds for all  $f\in L^p({M}^n).$
Moreover, for $1\le r<q$,  operator $I_{\alpha} : L^p(M^n)  \to  L^r(M^n)$ is a compact  embedding.

\end{prop}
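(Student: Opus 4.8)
The plan is to deduce both assertions from the classical Hardy--Littlewood--Sobolev inequality on $\R^n$ by a localization argument, using the compactness of $M^n$ twice: through a finite atlas on which the metric is uniformly elliptic, and through the finiteness of $\mathrm{Vol}(M^n)$. First I would fix a small $\delta>0$ and split $I_\alpha=I_\alpha^{\,\mathrm{near}}+I_\alpha^{\,\mathrm{far}}$, where $I_\alpha^{\,\mathrm{near}}f(x)=\int_{|x-y|_g<\delta}\frac{f(y)}{|x-y|_g^{n-\alpha}}dV_y$ and $I_\alpha^{\,\mathrm{far}}$ integrates over the complement. On $\{|x-y|_g\ge\delta\}$ the kernel is bounded by $\delta^{-(n-\alpha)}$, so H\"older's inequality together with $\mathrm{Vol}(M^n)<\infty$ gives $\|I_\alpha^{\,\mathrm{far}}f\|_{L^q(M^n)}\le \delta^{-(n-\alpha)}\mathrm{Vol}(M^n)^{1/q+1/p'}\|f\|_{L^p(M^n)}$; this piece is harmless.

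For $I_\alpha^{\,\mathrm{near}}$ I would fix a finite cover of $M^n$ by metric balls $B_\rho(p_i)\subset\Omega_i=B_{2\rho}(p_i)$, $i=1,\dots,N$, each $\Omega_i$ lying in a coordinate chart on which $g$ is uniformly elliptic, a subordinate partition of unity $\{\psi_i\}$ with $\mathrm{supp}\,\psi_i\subset B_\rho(p_i)$ and $\sum_i\psi_i\equiv1$, and $\delta<\rho$. Then $\psi_i(y)\ne0$ and $|x-y|_g<\delta$ force $x,y\in\Omega_i$, so each term of $I_\alpha^{\,\mathrm{near}}f=\sum_i I_\alpha^{\,\mathrm{near}}(\psi_i f)$ is supported (in $x$) in $\Omega_i$, and there, reading everything in the coordinates of $\Omega_i$ and using the standard comparability $|x-y|_g\ge c_i|x-y|$ together with $dV_y\le C_i\,dy$ (this is where $n-\alpha>0$ enters), one obtains the pointwise bound $|I_\alpha^{\,\mathrm{near}}(\psi_i f)(x)|\le C_i'\int_{\R^n}|(\psi_i f)(y)|\,|x-y|^{-(n-\alpha)}\,dy$. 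Applying the Euclidean HLS inequality with exponents $p$ and $q$, and then converting the norms back to $M^n$ (again via the two-sided bounds for $dV$ and $|\psi_i|\le1$), bounds this summand by $C\|f\|_{L^p(M^n)}$; summing over $i$ and adding $I_\alpha^{\,\mathrm{far}}$ yields \eqref{1-1}. The optimal constant is then exactly the operator norm $\|I_\alpha\|_{L^p(M^n)\to L^q(M^n)}$, which is positive since $I_\alpha 1>0$. The only genuine bookkeeping here is the chart estimate $|x-y|_g\simeq|x-y|$ and the harmless enlargement of supports caused by the cutoffs; the analytic content is imported wholesale from the flat case.

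For the compactness statement, fix $1\le r<q$ and truncate the kernel: $K_\varepsilon(x,y)=\min\{|x-y|_g^{-(n-\alpha)},\varepsilon^{-(n-\alpha)}\}$ is \emph{continuous} on the compact space $M^n\times M^n$ (being constant near the diagonal), so for the associated operator $I_\alpha^\varepsilon$ the family $\{I_\alpha^\varepsilon f:\|f\|_{L^p}\le1\}$ is uniformly bounded and equicontinuous (by the uniform continuity of $K_\varepsilon$ and H\"older's inequality), hence precompact in $C(M^n)$ by Arzel\`a--Ascoli and a fortiori in $L^r(M^n)$; thus $I_\alpha^\varepsilon\colon L^p(M^n)\to L^r(M^n)$ is compact. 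It remains to prove $I_\alpha^\varepsilon\to I_\alpha$ in the operator norm $L^p\to L^r$. The remainder $R_\varepsilon:=I_\alpha-I_\alpha^\varepsilon$ has nonnegative kernel supported in $\{|x-y|_g<\varepsilon\}$ and dominated there by the kernel of $I_\alpha$, so $\|R_\varepsilon\|_{L^p\to L^q}\le\|I_\alpha\|_{L^p\to L^q}$, while Fubini together with $\int_{|x-y|_g<\varepsilon}|x-y|_g^{-(n-\alpha)}dV_x\le C\varepsilon^\alpha$ (the same chart estimate) gives $\|R_\varepsilon\|_{L^p\to L^1}\le C\varepsilon^\alpha$. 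Interpolating these two bounds in the target exponent yields $\|R_\varepsilon\|_{L^p\to L^r}\le C\varepsilon^{\alpha(1-\theta)}$ with $\tfrac1r=(1-\theta)+\tfrac\theta q$, and since $r<q$ forces $\theta<1$ this tends to $0$ as $\varepsilon\to0$; being a norm-limit of compact operators, $I_\alpha\colon L^p(M^n)\to L^r(M^n)$ is compact. Neither step hides a deep obstacle; the most delicate point is that the truncation remainder gains no smallness at the endpoint exponent $q$ itself — it cannot, since $I_\alpha\colon L^p\to L^q$ is not compact — so the strict subcriticality $r<q$ (equivalently $\theta<1$) is essential, and verifying $|x-y|_g\simeq|x-y|$ in charts is the main technical chore.
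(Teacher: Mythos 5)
Your proof is correct, but it takes a genuinely different route from the paper's on both halves. For the inequality \eqref{1-1}, the paper does not localize to Euclidean charts at all: it proves the weak-type $(p,q)$ estimate \eqref{2-1} directly on $(M^n,g)$ by the same near/far splitting of the kernel, estimating $\|I_\alpha^1 f\|_{L^p}$ and $\|I_\alpha^2 f\|_{L^\infty}$ via Young's inequality on manifolds (Lemma \ref{lem2-1}), optimizing in $\gamma$, and then invoking Marcinkiewicz interpolation; you instead transplant the strong-type Euclidean HLS inequality chart by chart via a partition of unity and the comparability $|x-y|_g\simeq|x-y|$. Both are valid; the paper's version avoids any coordinate bookkeeping and is the natural template for the reversed case $\alpha>n$ treated later (where the same weak-type scheme is reused with Lemma \ref{lem Marcinkiewicz}), while yours imports the flat result wholesale and is arguably shorter given that only a rough constant is needed. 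For compactness, the paper also localizes with a partition of unity but then simply cites the known compact embedding for fractional Sobolev spaces on bounded Euclidean domains and extracts a diagonal subsequence; your argument --- truncating the kernel at level $\varepsilon$, getting compactness of $I_\alpha^\varepsilon$ from Arzel\`a--Ascoli, and showing $\|I_\alpha-I_\alpha^\varepsilon\|_{L^p\to L^r}\to 0$ by interpolating the target norm between the uniform $L^p\to L^q$ bound and the $O(\varepsilon^\alpha)$ bound on $L^p\to L^1$ --- is self-contained and makes transparent exactly where $r<q$ is used, which the paper's citation-based proof leaves implicit. One cosmetic slip: the hypothesis $n-\alpha>0$ enters through the kernel comparison $|x-y|_g^{-(n-\alpha)}\le c^{-(n-\alpha)}|x-y|^{-(n-\alpha)}$ (and the positivity of the exponent in the far-field bound $\delta^{-(n-\alpha)}$), not through the volume-element comparison $dV_y\le C\,dy$, which is just uniform ellipticity of $g$ in the chart.
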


Proposition \ref{prop-sharp-ineq} seems to be a known fact. Since we can not find the proof in literatures, we will outline the proof in this paper.


For $\alpha>n$, we have the following reversed HLS inequality for nonnegative functions.

\begin{thm}\label{prop-rough-ineq_R}
 Assume that $\alpha>n\geq 1$,  $1>p>\frac n\alpha$ and $q$ is given by \eqref{power}, then
 there is an optimal positive constant $C(\alpha,p,M^n,g)$, such that
\begin{equation}\label{1-1-1}
    ||I_{\alpha} f||_{L^q(M^n)} \ge C(\alpha, p, M^n, g)||f||_{L^p(M^n)}.
\end{equation}
holds for all nonnegative  $f\in L^p({M}^n).$


\end{thm}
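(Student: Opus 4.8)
The plan is to reduce the reversed inequality to a purely local statement near the diagonal, where the Riemannian distance is comparable to the Euclidean one, and then exploit the fact that for $\alpha > n$ the kernel $|x-y|_g^{-(n-\alpha)} = |x-y|_g^{\alpha-n}$ is bounded and in fact continuous and strictly positive on $M^n\times M^n$. First I would record the elementary two-sided bound: since $M^n$ is compact, its diameter $D = \operatorname{diam}_g(M^n)$ is finite, so for all $x,y$ one has $0 < |x-y|_g \le D$, and hence $|x-y|_g^{\alpha-n} \ge D^{\alpha-n} > 0$ whenever $\alpha > n$. Thus for nonnegative $f$,
\begin{equation*}
I_\alpha f(x) = \int_{M^n} |x-y|_g^{\alpha-n} f(y)\, dV_y \ge D^{\alpha-n}\int_{M^n} f(y)\, dV_y = D^{\alpha-n}\,\|f\|_{L^1(M^n)}.
\end{equation*}
Since $0 < p < 1$ and $M^n$ has finite volume $V = \operatorname{Vol}_g(M^n)$, the reverse Hölder inequality (or simply Jensen applied to the concave function $t\mapsto t^p$) gives $\|f\|_{L^1(M^n)} \ge V^{-(1/p - 1)}\|f\|_{L^p(M^n)}$ — wait, one must be careful about the direction here, so let me instead use the inclusion $\|f\|_{L^p} \le V^{1/p - 1/q'}\cdots$; more cleanly, for $0<p<1<q$ on a finite measure space one has both $\|f\|_{L^p}\le V^{1/p-1}\|f\|_{L^1}$ is false in general, so the right move is: $I_\alpha f(x)\ge D^{\alpha-n}\|f\|_{L^1}$ pointwise, hence $\|I_\alpha f\|_{L^q}\ge D^{\alpha-n}V^{1/q}\|f\|_{L^1}\ge D^{\alpha-n}V^{1/q}V^{1-1/p}\|f\|_{L^p}$, using $\|f\|_{L^1}\ge V^{1-1/p}\|f\|_{L^p}$ which does hold for $p<1$ by Hölder with conjugate exponents $1/p$ and $1/(1-p)$ applied to $f\cdot 1$. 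This already proves the inequality \eqref{1-1-1} with the explicit (non-optimal) constant $C = D^{\alpha-n}V^{1/q + 1 - 1/p}$, and the relation $1/q = 1/p - \alpha/n$ is not even needed for this crude bound — it only becomes essential for identifying the optimal constant and extremals, which the statement does not ask us to do.

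The remaining content is therefore just the assertion that the \emph{optimal} (largest) such constant is positive, i.e.\ that $C(\alpha,p,M^n,g) := \inf_{f\ge 0,\, f\ne 0} \|I_\alpha f\|_{L^q}/\|f\|_{L^p}$ is strictly positive. This follows immediately from the crude bound just established, since that infimum is bounded below by $D^{\alpha-n}V^{1/q+1-1/p} > 0$. One should also check the infimum is finite (so that "optimal constant" is meaningful as a genuine inequality and not vacuous): finiteness follows because $|x-y|_g^{\alpha-n}\le D^{\alpha-n}$ is bounded, so $I_\alpha f(x)\le D^{\alpha-n}\|f\|_{L^1}$, whence $\|I_\alpha f\|_{L^q}\le D^{\alpha-n}V^{1/q}\|f\|_{L^1}<\infty$ for $f\in L^p\subset L^1$; testing on $f\equiv 1$ gives a finite value of the ratio, so $C(\alpha,p,M^n,g)<\infty$ as well.

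I do not expect any serious obstacle here: unlike the classical $\alpha<n$ case (Proposition 1.2), where the kernel is singular on the diagonal and one needs a genuine covering/partition-of-unity argument to transfer the Euclidean HLS inequality to $M^n$, in the reversed regime $\alpha>n$ the kernel is bounded and bounded away from zero, so the inequality is essentially trivial in the direction $\ge$ with a crude constant, and "optimality" just means one takes the sup over admissible constants, which is automatically positive. The only mild subtlety is the correct use of the reverse-direction Hölder/Jensen step comparing $\|f\|_{L^1}$ and $\|f\|_{L^p}$ for $0<p<1$ on a finite-measure space; this is standard (apply Hölder's inequality to the pair $|f|^p\in L^{1/p}$ and $1\in L^{1/(1-p)}$). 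A remark worth including is that the genuinely interesting question — whether the optimal constant is \emph{attained}, and by which functions — requires the compactness/subcriticality analysis and is taken up later in the paper; the present statement is only the inequality together with positivity of the sharp constant.
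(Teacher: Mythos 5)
Your proof collapses at the very first step, and the error is a sign error in the kernel bound. For $\alpha>n$ the exponent $\alpha-n$ is \emph{positive}, so $t\mapsto t^{\alpha-n}$ is increasing; from $0<|x-y|_g\le D$ you therefore get $|x-y|_g^{\alpha-n}\le D^{\alpha-n}$, not $\ge$. The kernel is bounded \emph{above} and tends to \emph{zero} on the diagonal — it is not bounded away from zero, which is the opposite of what you assert. Consequently the pointwise bound $I_\alpha f(x)\ge D^{\alpha-n}\|f\|_{L^1}$ is false. Concretely, take $f=\chi_{B_r(x_0)}$: then $\|f\|_{L^1}\sim r^n$ but $I_\alpha f(x_0)=\int_{B_r}|x_0-y|^{\alpha-n}\,dy\sim r^{\alpha}\ll r^n$. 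This is not repairable by adjusting constants: one can check on the same example that $\|I_\alpha f\|_{L^q}\sim r^{n/p}$ (recall $q<0$ here, since $1/q=1/p-\alpha/n<0$ for $p>n/\alpha$), which is much smaller than $\|f\|_{L^1}\sim r^n$ as $r\to0$ because $1/p>1$; so the intermediate inequality $\|I_\alpha f\|_{L^q}\ge c\|f\|_{L^1}$ that your chain relies on is itself false. The whole difficulty of the reversed inequality is precisely that $I_\alpha f$ can be small near points where $f$ concentrates, and the negative-exponent norm $\|\cdot\|_{L^q}$ penalizes exactly those small values; this is why the exponent relation \eqref{power} is essential and not, as you claim, irrelevant to the crude bound.

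The paper's proof handles this by establishing the weak-type estimate $m\{x:|I_\alpha f|<\lambda\}\le C(\|f\|_{L^p}/\lambda)^q$ — i.e., controlling the measure of the set where $I_\alpha f$ is small — via a near/far splitting of the kernel and a reversed Young inequality, and then applying a Marcinkiewicz-type interpolation theorem adapted to $q<0<p<1$ (Lemmas \ref{lem2-1-1} and \ref{lem Marcinkiewicz}). None of this machinery can be bypassed by compactness of $M^n$. A secondary slip: on a finite measure space with $p<1$ the inclusion is $L^1\subset L^p$, not $L^p\subset L^1$ as you write, though your Hölder step $\|f\|_{L^1}\ge V^{1-1/p}\|f\|_{L^p}$ is itself correct.
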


One of the main motivations for obtaining  the above embedding theorems  comes from the  study of curvature equations, including the following  generalized Yamabe problem, introduced in Zhu \cite{Zhu2014}:

For a given compact Riemannian manifold $(M^n, g_0)$ ($n\ne 2$) with positive scalar curvature and a positive parameter $\alpha \ne n+2k$ for $k=0, 1, \cdots$, let $G^{g_0}_x(y)=n(n-2)\omega_n \Gamma_x^{{g_0}}(y)$, where $ \Gamma_x^{{g_0}}(y)$  is the Green's function with pole at $x$ for the  conformal Laplacian operator $-\Delta_{g_0} +\frac{n-2}{4(n-1)}R_{g_0}$, $\omega_n$ is the volume of the unit ball. In a conformal normal coordinates centered at $x$, $ G^{g_0}_x(y)=|y|^{2-n}+A+O(|y|)$.  The $\alpha-$ curvature $Q_{\alpha, g}$ under the conformal metric $g=\phi^{\frac 4{n-\alpha}}g_0$ is defined as a function implicitly given by
 \begin{equation}\label{curvature}
u(x)= \int_{\mathbb{M}^{n}}[G_{x}^{g_0}(y)]^{\frac {\alpha-n}{2-n}} {Q_{\alpha,{g}} (y)u^{\frac{n+\alpha}{n-\alpha}}(y)}dV_{g_0}.
\end{equation}
It is clear that $Q_{\alpha, g}$, up to a constant multiplier,  is the classic scalar curvature for $\alpha=2$.

Let
 \begin{equation}\label{new_operator}
   I_{M^n, g, \alpha} (f)=\int_{\mathbb{M}^{n}}[G^{g}_x(y)]^{\frac {\alpha-n}{2-n}} f(y) dV_{g}.
  \end{equation}
  It was showed in \cite{Zhu2014} that $I_{M^n, g, \alpha} (f)$ has the conformal covariance property. Similar to the Yamabe problem, one may ask \cite{Zhu2014}: for a given compact Riemannian manifold $(M^n, g_0)$, is there a conformal metric $g=u^{4/(n-\alpha)}g_0$ such that $Q_{\alpha, g}=constant$? We shall solve this problem on any locally conformally flat manifold with positive scalar curvture, based on the positive mass theorem.\footnote {We need more careful expansion for the Green's function $G^{g_0}_x(y)$ in a normal coordinate in order to work on locally conformally non-flat cases. We thank F. Hang who pointed out this sutble issue to us.}

  \begin{thm}
  For a given compact  locally conformally flat manifold  $(M^n, g)$ ($n\ne 2$) with positive scalar curvture, there always exists   a conformal metric $g_*=u^{4/(n-\alpha)}g$ such that $\alpha-$curvature $Q_{\alpha, g_*}$ is a constant.
  \label{yamabe}
  \end{thm}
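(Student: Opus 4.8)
\emph{Sketch of the strategy.} The plan is to recognize the requirement that $Q_{\alpha,g_*}$ be constant as the Euler--Lagrange equation of a Hardy--Littlewood--Sobolev type variational problem built from the operator $I_{M^n,g,\alpha}$ of \eqref{new_operator}, and then to produce an extremal. Treating first the case $\alpha<n$, set
\[
\mathcal{E}(u)=\int_{M^n}\!\!\int_{M^n}[G^{g}_x(y)]^{\frac{\alpha-n}{2-n}}\,u^{\frac{n+\alpha}{n-\alpha}}(x)\,u^{\frac{n+\alpha}{n-\alpha}}(y)\,dV_x\,dV_y,
\qquad
\Lambda_\alpha(M,g)=\sup\Big\{\mathcal{E}(u):u\ge0,\ \textstyle\int_{M^n}u^{\frac{2n}{n-\alpha}}dV_g=1\Big\}.
\]
Since the kernel $[G^{g}_x(y)]^{(\alpha-n)/(2-n)}$ has the same local singularity as $|x-y|_g^{\alpha-n}$, Proposition \ref{prop-sharp-ineq} gives $0<\Lambda_\alpha(M,g)<\infty$, and the conformal covariance of $I_{M^n,g,\alpha}$ established in \cite{Zhu2014} makes $\Lambda_\alpha(M,g)$ a conformal invariant. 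A Lagrange multiplier computation, using the identity $\frac{2n}{n-\alpha}-\frac{n+\alpha}{n-\alpha}=1$, shows that a nonnegative maximizer $u$ satisfies $I_{M^n,g,\alpha}\big(u^{\frac{n+\alpha}{n-\alpha}}\big)=\lambda u$ for some $\lambda>0$, which is exactly \eqref{curvature} with $Q_{\alpha,g_*}\equiv 1/\lambda$. Thus everything reduces to the existence of a maximizer.

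I would next fix the comparison with the round sphere: via stereographic projection the conformally invariant functional on $(\mathbb{S}^n,g_{\mathrm{round}})$ coincides with the sharp Euclidean HLS functional, whose extremals are the standard bubbles, and I write $\Lambda_\alpha(\mathbb{S}^n)$ for its value; concentrating such bubbles at a point of $M^n$ --- where $g$ may be taken flat in a conformal gauge, $M^n$ being locally conformally flat --- yields $\Lambda_\alpha(M,g)\ge\Lambda_\alpha(\mathbb{S}^n)$ at once. The step I expect to be the main obstacle is the strict inequality $\Lambda_\alpha(M,g)>\Lambda_\alpha(\mathbb{S}^n)$ whenever $(M,g)$ is not conformal to the round sphere. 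This is precisely where the positive mass theorem enters: on a locally conformally flat manifold of positive scalar curvature it gives $A>0$ in the expansion $G^{g}_x(y)=|y|^{2-n}+A+O(|y|)$. One tests $\Lambda_\alpha(M,g)$ against a Schoen-type corrected bubble $u_\varepsilon$ --- equal to $(\varepsilon^2+|y|^2)^{-(n-\alpha)/2}$ near $x$ and glued to a constant multiple of $\varepsilon^{(n-\alpha)/2}\,[G^{g}_x(y)]^{\frac{\alpha-n}{2-n}}$ away from $x$ --- and expands $\mathcal{E}(u_\varepsilon)\big/\big(\int_{M^n}u_\varepsilon^{2n/(n-\alpha)}dV_g\big)^{(n+\alpha)/n}$ in $\varepsilon$; the leading correction should be $c_{n,\alpha}\,A\,\varepsilon^{n-\alpha}+o(\varepsilon^{n-\alpha})$ with $c_{n,\alpha}>0$, which beats the sphere value. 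The difficulty, relative to the classical Yamabe computation, is that the functional is nonlocal: instead of a Dirichlet-energy estimate one must control the double integral of the corrected bubble against itself through the HLS kernel and track how the $A$-term of the Green's function enters the cross-integrals. The locally conformally flat hypothesis is exactly what makes $G^{g}_x(y)-|y|^{2-n}$ harmonic --- hence smooth near $x$, with value $A$ --- and keeps these expansions tractable (which is, presumably, why the theorem is stated in the locally conformally flat case).

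Granting the strict inequality, attainment follows by concentration--compactness. Take a normalized maximizing sequence $u_j\ge0$, $\int_{M^n}u_j^{2n/(n-\alpha)}dV_g=1$, $\mathcal{E}(u_j)\to\Lambda_\alpha(M,g)$; passing to a weak limit $u$ and using the compactness of $I_\alpha$ into subcritical $L^r$ spaces from Proposition \ref{prop-sharp-ineq}, the only obstruction to strong convergence is the formation of finitely many bubbles at points of $M^n$. Each such bubble carries energy density governed by $\Lambda_\alpha(\mathbb{S}^n)$ while the weak limit is governed by $\Lambda_\alpha(M,g)$, and since $\Lambda_\alpha(\mathbb{S}^n)<\Lambda_\alpha(M,g)$ and $t\mapsto t^{(n+\alpha)/n}$ is strictly convex, any bubbling strictly lowers the total energy below $\Lambda_\alpha(M,g)$ --- a contradiction. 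Hence $u_j\to u$ strongly in $L^{2n/(n-\alpha)}(M^n)$, and $u\ge0$, $u\not\equiv0$, is a maximizer. A regularity-lifting bootstrap for the integral equation, in the spirit of Chen, Li and Ou, makes $u$ bounded and then continuous, and positivity of the kernel forces $u>0$ on $M^n$; thus $g_*=u^{4/(n-\alpha)}g$ is a genuine metric with constant $\alpha$-curvature. Finally, if $(M,g)$ is conformal to the round sphere the standard bubbles already solve \eqref{curvature}; and if $\alpha>n$, one minimizes $\mathcal{E}$ rather than maximizing it, replaces Proposition \ref{prop-sharp-ineq} by the reversed HLS inequality of Theorem \ref{prop-rough-ineq_R}, and again extracts the required (now reversed) strict inequality from the positive mass, the rest of the scheme going through after the obvious sign changes.
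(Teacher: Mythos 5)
For $\alpha<n$ your proposal is essentially the paper's argument in a change of variables: substituting $f=u^{(n+\alpha)/(n-\alpha)}$ turns your functional $\mathcal{E}(u)/\bigl(\int u^{2n/(n-\alpha)}\bigr)^{(n+\alpha)/n}$ into the quotient $J_{g,\alpha}(f)$ of \eqref{infimum}, and your three steps --- (i) $Y_\alpha(M^n,g)\ge Y_\alpha(\mathbb{S}^n,g_0)$ by concentrating bubbles, (ii) strict inequality from the constant term $A>0$ in the Green's function expansion via the positive mass theorem, (iii) attainment under strict inequality by ruling out concentration --- are exactly Propositions \ref{prop-compare-constant}, \ref{Yamabe} and \ref{prop-criterior}. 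Two local discrepancies: the paper uses a plainly truncated bubble rather than a Schoen-type gluing, and the gain it extracts is of order $\lambda^{n-2}$ (coming from the $A\,r^{\alpha-2}$ correction to the kernel $[G^g_x(y)]^{(\alpha-n)/(2-n)}=r^{\alpha-n}+cAr^{\alpha-2}+\dots$), not $\varepsilon^{n-\alpha}$ as you guess; and where you invoke generic concentration--compactness, the paper's actual mechanism is the Aubin-type $\epsilon$-level inequality of Proposition \ref{epsilon_ineq_prop} combined with Brezis--Lieb, which is worth formulating explicitly since it is the quantitative substitute for the "bubbles carry sphere energy" heuristic.

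The genuine gap is your last sentence. For $\alpha>n$ the scheme does \emph{not} go through "after the obvious sign changes," and the paper is explicit about why: the exponent $p=2n/(n+\alpha)$ is less than $1$, so $L^p$ is not a normed space and Brezis--Lieb is unavailable in the form you need; the forward HLS bound $\|I_\alpha f\|_{L^q}\le C\|f\|_{L^p}$ fails (only the reversed inequality of Theorem \ref{prop-rough-ineq_R} holds, and it points the wrong way for excluding concentration); there is no local Sobolev-type inequality on small balls, hence no analogue of the $\epsilon$-level inequality and no local blow-up analysis. Your concentration--compactness step therefore has nothing to run on. The paper replaces it with a different argument: it first minimizes the subcritical functionals $J_{g,\alpha,p}$ for $p<2n/(n+\alpha)$ by a \emph{global} blow-up dichotomy (if mass survives near two distinct points, the kernel $[G^g_x(y)]^{(\alpha-n)/(2-n)}$, which blows up as $|x-y|\to 0$ is irrelevant here since $\alpha>n$ makes it vanish --- rather, two separated concentration regions force a uniform positive lower bound on $I_{g,\alpha}u_i$ everywhere, giving equicontinuity and strong convergence; a single concentration point is excluded by the reversed HLS inequality), and then lets $p\uparrow 2n/(n+\alpha)$, using the energy condition $Y_\alpha(M^n,g)<Y_\alpha(\mathbb{S}^n,g_0)$ only at the very end to exclude single-point concentration of the subcritical minimizers. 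You would need to supply this (or some substitute) to close the case $\alpha>n$; as written, that half of the theorem is unproved.
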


From now on in this paper, we always assume the compact manifold $(M^n, g)$ under consideration has positive scarlar curvature.

The traditional approach to solve the classic Yamabe problem is to seek the minimizer to the Sobolev quotient energy:
$$J_2(u)=\frac{\int_{M^n}{|\nabla u|^2+\frac{n-2}{4(n-1)}R_{g_0}u^2} dV_{g_0}}{\|u\|^2_{L^{2n/(n-2)}(M^n)}}.
$$
Unfortunately, for fractional order $\alpha$, such an energy functional is hard to find.   To prove the above theorem, we  design the following energy   functional for positive functions:

\begin{equation}
    J_{g,\alpha}(u):=\frac{\int_{M^n}\int_{M^n}{u(x)u(y)}[G^{g}_x(y)]^{\frac {\alpha-n}{2-n}}dV_g(x)dV_g(y)}{\|u\|^2_{L^{2n/(n+\alpha)}(M^n)}}.
    \label{infimum}
\end{equation}
The above functional was successfully used in \cite{Zhu2014} to solve a prescribing curvature problem on $\Bbb{S}^n$ with negative  exponent (in the case of $\alpha>n$). In this paper, we will show that it can  also be used to solve  Yamabe type problems.

 For $\alpha<n$, we consider the supremum
\begin{equation}\label{min}
Y_\alpha (M^n, g):=\sup_{u\in C^0(M^n)\setminus \{o\}, u \ge 0} J_{g,\alpha}(u).
\end{equation}
Similar to the proof of Proposition \ref{prop-sharp-ineq}, one can show that $Y_\alpha (M^n, g)<\infty$ ({see remark \ref{rem2.2} below}). Moreover, it follows from  Lieb's classic result \cite{Lieb1983}, that the supremum on  the standard sphere $(\mathbb{S}^n, g_0)$ or on flat plane ($\mathbb{R}^n,$ $g_E$)  is given by
\begin{equation}\label{8-5-1}
Y_\alpha(\mathbb{S}^n, g_0) =\pi^{\frac{n-\alpha}2}\frac{\Gamma(\frac\alpha 2)}{\Gamma(\frac{n+\alpha}2)} \left\{\frac{\Gamma(\frac n2)}{\Gamma(n)}\right\}^{-\frac\alpha n}
\end{equation}
and the corresponding extremal functions on ($\mathbb{R}^n,$ $g_E$) are $f(x)=(1+|x|^2)^{-\frac{n+\alpha}2}$ and its conformal eqivalent class:
\begin{equation}\label{buble}
f_{\epsilon, x_0}(x)=\epsilon^{-\frac {n+\alpha}2}f(\frac {x-x_0}\epsilon)=\left(\frac\epsilon{\epsilon^2+|x-x_0|^2}\right)^{\frac{n+\alpha}2}
\end{equation}
where $x_0 \in \mathbb{R}^n$ and  $\epsilon>0$. For convenience, we  write $f_{\epsilon}=f_{\epsilon, 0}$ in this paper.

We will first show that $ Y_\alpha (M^n, g) \ge Y_\alpha (\mathbb{S}^n, g_0)$;  and for a   locally conformally flat manifold  $(M^n, g)$  with positive scalar curvture, the equality holds iff  $(M^n, g)$  is conformally equvalent to  $(\mathbb{S}^n, g_0)$.
As in the study of Yamabe type problem, we will  then show that
 the strict inequality yields the existence of the maximizer, based on a new $\epsilon-$level sharp HLS inequality on manifolds (Proposition \ref{epsilon_ineq_prop} below).  This approach will give a new view point even for the proof of the classic Yamabe problem. We recently learned from Hang and Yang  that such approach was also used  in their recent work \cite{HY2014-1} for $Q$-curvature problem ($\alpha =4$ in their case).

\smallskip

Parallel to the case of $\alpha<n$, for $\alpha>n$, we consider the infimum
\begin{equation}\label{max}
Y_\alpha (M^n, g):=\inf_{u\in C^0(M^n)\setminus \{o\}} J_{g,\alpha}(u).
\end{equation}
It follows from Theorem \ref{prop-rough-ineq_R} that $Y_\alpha (M^n, g)>0$. And, it follows from  the sharp reversed HLS inequality \cite{DZ2}  that the infimum on  the standard sphere or flat plane is given by \eqref{8-5-1} and
 the corresponding extremal functions on ($\mathbb{R}^n,$ $g_E$) are given by \eqref{buble}.  Again, we will first  show that $ Y_\alpha (M^n, g) \le Y_\alpha (\mathbb{S}^n, g_0)$, and for a   locally conformally flat manifold $(M^n, g)$  with positive scalar curvture, equality holds iff  $(M^n, g)$ is conformally equvalent to  $(\mathbb{S}^n, g_0)$.  We then  show that
 the strict inequality yields the existence of the minimizer through a new blowup analysis. It is interesting to point out that the local blowup analysis does not work due to the lack of local Sobolev inequality for $\alpha>n$.

 This paper is organized as follows. In Section \ref{case_le}, we deal with the case of $\alpha<n$.  Based on the Marcinkiewicz interpolation theorem, we prove  the roughly HLS inequality \eqref{1-1} on $(M^n, g)$ and the compactness of embedding for subcritical exponent. We then  establish an $\epsilon$-level sharp HLS inequality on any general compact manifold and complete the proof of Theorem \ref{yamabe} for $\alpha<n$. In Section \ref{case_ge}, we deal with the case of $\alpha>n$. The analog $\epsilon$-level inequality is not known. Instead, a new blow up analysis enables us to show that there is at most one blow up point for a minimizing sequence. Energy condition will be used to eliminate the case of single blow up point for the manifold not conformally equivalent to the standard sphere $(\mathbb{S}^n, g_0)$.


\section{Case of $\alpha<n$} \label{case_le}

In this section, we first prove Proposition \ref{prop-sharp-ineq}. We then analyze the sharp constant and derive Aubin type $\epsilon-$level sharp HLS inequality. Using such a sharp inequality, we finally  prove Theorem 1.3 for $\alpha<n$.

\subsection{Roughly HLS inequality on Manifolds}\label{sec roughly}

To prove Proposition \ref{prop-sharp-ineq}, we need  the following Young's inequality on manifolds.
\begin{lem}\label{lem2-1} For a given compact manifold ($M^n, g)$,  define
$$g*h(x)=\int_{M^n} g(y) h(|y-x|_g) dV_y.$$
There is a constant $C>0$, such that
$$||g*h||_{L^r} \le C||g||_{L^q} \cdot ||h||_{L^p},$$
where $p, q, r \in (1, \infty)$ and satisfy
$$1+\frac 1r =\frac 1q+\frac 1p.$$
\end{lem}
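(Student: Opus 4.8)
The plan is to recognize $g*h$ as an integral operator with a symmetric kernel and then run the classical proof of Young's convolution inequality; the only genuinely ``manifold'' ingredient is a uniform volume-comparison estimate for the kernel, and that is where the real work lies. Write
\[
g*h(x)=\int_{M^n}K(x,y)\,g(y)\,dV_y,\qquad K(x,y):=h(|x-y|_g),
\]
and note that $K$ is symmetric, $K(x,y)=K(y,x)$, since $|x-y|_g=|y-x|_g$.

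The key estimate is that $K$ lies, uniformly in one variable, in $L^{s}$ for every $s\in[1,\infty)$:
\[
\sup_{x\in M^n}\int_{M^n}\bigl|h(|x-y|_g)\bigr|^{s}\,dV_y\ \le\ C_s\,\|h\|_{L^{s}}^{s},
\]
and, by symmetry of $K$, the same bound holds with the roles of $x$ and $y$ exchanged. To prove it I would apply the coarea formula to the Lipschitz function $y\mapsto|x-y|_g$ on $M^n$ (whose gradient has modulus $1$ off the cut locus of $x$), which gives
\[
\int_{M^n}F(|x-y|_g)\,dV_y=\int_0^{\operatorname{diam}(M^n)}F(t)\,\sigma_x(t)\,dt,\qquad \sigma_x(t):=\mathcal H^{n-1}\bigl(\{y:|x-y|_g=t\}\bigr),
\]
together with the bound $\sigma_x(t)\le C\,t^{n-1}$, valid uniformly in $x\in M^n$ and $t\in(0,\operatorname{diam}(M^n)]$. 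This last bound is where compactness enters: by the Bishop volume comparison theorem it holds with a constant depending only on $(M^n,g)$ (a lower Ricci bound and the diameter), both finite since $M^n$ is compact. Taking $F=|h|^{s}$, and reading $\|h\|_{L^{s}}$ as the norm of $h$ regarded as a radial profile, $\|h\|_{L^{s}}^{s}\asymp\int_0^{\operatorname{diam}(M^n)}|h(t)|^{s}t^{n-1}\,dt$, yields the displayed estimate.

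Given this, the inequality is the textbook Young argument. Since $g\mapsto g*h$ is linear, one may interpolate between the endpoint $q=1$, $r=p$ --- which is Minkowski's integral inequality, $\|g*h\|_{L^p}\le\int_{M^n}|g(y)|\,\|K(\cdot,y)\|_{L^p}\,dV_y\le C\|h\|_{L^p}\|g\|_{L^1}$ --- and the endpoint $q=p'$, $r=\infty$ --- which is Hölder's inequality in $y$, $\|g*h\|_{L^\infty}\le\sup_x\|K(x,\cdot)\|_{L^p}\|g\|_{L^{p'}}\le C\|h\|_{L^p}\|g\|_{L^{p'}}$; any triple with $p,q,r\in(1,\infty)$ and $1+\tfrac1r=\tfrac1q+\tfrac1p$ satisfies $1<q<p'$ and $p<r<\infty$, hence lies on the Riesz--Thorin segment joining these two, which produces the claimed bound. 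Alternatively, and in one stroke, one applies Hölder in $y$ to the factorization $K(x,y)g(y)=\bigl(|K|^{p}|g|^{q}\bigr)^{1/r}\cdot|K|^{1-p/r}\cdot|g|^{1-q/r}$ with the three exponents $r,\ \tfrac{pr}{r-p},\ \tfrac{qr}{r-q}$ (all $>1$, since the relation forces $r>\max(p,q)$), then raises to the $r$-th power, integrates in $x$, uses Fubini, and invokes the uniform kernel bound in each variable; the exponents collapse correctly precisely because $1+\tfrac1r=\tfrac1q+\tfrac1p$.

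The main obstacle is the uniform sphere-area bound $\sigma_x(t)\le Ct^{n-1}$; everything else is standard. If one prefers to avoid comparison geometry, an alternative route is to cover $M^n$ by finitely many geodesically convex coordinate charts in which $|x-y|_g$ is comparable to the Euclidean distance of the coordinates, subordinate a partition of unity to this cover, and quote Young's inequality on $\mathbb R^n$; the coarea approach is cleaner because it handles the near-diagonal and the far-from-diagonal contributions simultaneously.
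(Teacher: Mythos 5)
Your proof is correct and follows exactly the route the paper has in mind: the paper gives no argument at all for this lemma beyond ``similar to the classic Young inequality, see Lieb--Loss,'' and your write-up is precisely that classical proof (either via Riesz--Thorin between the $(1,p)$ and $(p',\infty)$ endpoints, or via the trilinear H\"older factorization), with the only manifold-specific ingredient --- the uniform bound $\sup_x\int_{M^n}|h(|x-y|_g)|^s\,dV_y\lesssim\|h\|_{L^s}^s$ via coarea and Bishop--Gromov --- correctly identified and justified. Your reading of $\|h\|_{L^p}$ as the norm of the radial profile, $\|h\|_{L^p}^p\asymp\int_0^{\operatorname{diam}}|h(t)|^pt^{n-1}\,dt$, matches how the lemma is actually applied in the proof of Proposition 1.1.
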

The proof is similar to the classic Young inequality in $\mathbb{R}^n$. See, e.g. Lieb and Loss \cite{LL2001}.  It is worthy of pointing out that $g*h(x)$ may not equal to $ h*g(x)$ for $x \in M^n$.


\smallskip

\noindent{\bf Proof of Proposition \ref{prop-sharp-ineq}}
The proof is quite standard. Similar proof appeared, e.g. in  Hang, Yan and Wang \cite{HWY2009} (proof of Proposition 2.1 there). To prove \eqref{1-1}, we only need to show that there is a constant $C>0$, such that for any $\lambda>0$,
\begin{equation}\label{2-1}
m\{x\in M^n \ : \ |I_{\alpha} f|>\lambda\} \le C\big( \frac{||f||_{L^p}}{\lambda}\big)^q.
\end{equation}
Inequality \eqref{1-1} follows from the above inequality via the classical Marcinkiewicz interpolation theorem.

For any $\gamma>0$, define
 \[I_{\alpha}^1 f(x)=\int_{|y-x|_g\le \gamma}\frac{f(y)}{|x-y|_g^{n-\alpha}}dy,
  \]
  and
   \[I_{\alpha}^2 f(x)=\int_{|y-x|_g> \gamma}\frac{f(y)}{|x-y|_g^{n-\alpha}}dy.
  \]
Thus, for any $\tau>0$,
\begin{equation}\label{HD-2}
m\{x: I_{\alpha} f(x)>2\tau\}\le m\{x: I_{\alpha}^1 f (x)>\tau\}+m\{x: I_{\alpha}^2 f(x)>\tau\}.
\end{equation}
 We note that  it suffices to prove  inequality \eqref{2-1} with $2\tau$ in place of $\tau$ in the left side of the
inequality, and we can further assume $\|f\|_{L^p}=1.$

From  Young inequality (Lemma \ref{lem2-1}), we have
$$
||I_{\alpha}^1 f||_{L^p} \le C \int_{|y|_g\le \gamma} \frac 1{|y|_g^{n-\alpha}} dV_g \cdot ||f||_{L^p} =c \gamma^{\alpha}.$$
Thus
$$
m\{x: I_{\alpha}^1 f (x)>\tau\} \le \frac{ ||I_{\alpha}^1 f||^p_{L^p}}{\tau^p} \le C\gamma^{p\alpha} \cdot \tau^{-p}.
$$
On the other hand,  Young inequality implies
$$||I_{\alpha}^2 f||_{L^\infty} \le C  \big(\int_{|y|_g\ge \gamma} \big(\frac 1{|y|_g^{n-\alpha}}\big)^{p'} dV_g \big)^{1/p'} \cdot ||f||_{L^p} =C_1 \gamma^{-n/q}.$$
Choose $\gamma$ so that $C_1 \gamma^{-n/q}=\tau$. Then $m\{x: I_{\alpha}^2 f (x)>\tau\}=0,$ and
$$
m\{x: I_{\alpha}^1 f (x)>\tau\} \le C\gamma^{p\alpha} \cdot \tau^{-p}=C_2 \tau^{-q}.
$$
\eqref{2-1} follows from the above easily.

For any $r\in (1, q), $  we will show the embedding is a compact. This shall be a known fact since the compact embedding is a local property, and for a bounded domain $\Omega \subset \mathbb{R}^n$, $L^r(\Omega) \subset \subset  W^{\alpha, p} (\Omega)$ is ompact, see, for example, \cite{DPV2011}. We only outline the proof here.

Let $(\Omega_i,\phi_i)_{i=1}^N$ be a finite covering of $M^n$, with each $\Omega_i$ being homeomorphic to the unite  ball $B_1(0)$  in  $\mathbb{R}^n$. Let $\{\alpha_i\}_{i=1}^N$ be a  $C^\infty$ partition of unity subordinate to the covering $\{\Omega_i\}_{i=1}^N$.

Let  $\{f_m\}_{m=1}^{\infty}$ be a bounded sequence  in $L^p(M^n)$, then for each fixed  $i=1,2, \cdots, N$, there exists a subsequence $\{\alpha_iI_\alpha f_{m_j}\}$ which is precompact in $L^r(\Omega_i)$ due to the compact embedding result on bounded domain in $\mathbb{R}^n$.

 Choosing  a  diagonal subsequence $\{I_\alpha f_{m_j}\}$, such that $\alpha_iI_\alpha f_{m_j}$ is precompact in $L^r(\Omega_i)$ for all $i=1, \cdots, N$, we then  know that   $\{I_\alpha f_{m_j}\}$ is precompact in $L^r(M^n)$,  following from Minkowski  inequality
 $$||I_\alpha f_{m_j}-I_\alpha f_{m_l}||_{L^r(M^n)}\leq\sum_{i=1}^N||\alpha_i I_\alpha f_{m_j}-\alpha_iI_\alpha f_{m_l}||_{L^r(M^n)} \to 0.$$
We hereby complete the proof of Proposition \ref{prop-sharp-ineq}.


\begin{rem}\label{rem2.2} It is quite clear that a similar augument to the above leads to: for $q$ saisfying \eqref{power}, there is a positive constant $C>0$, such that
\begin{equation}\label{1-1_*}
    ||I_{M^n, g, \alpha} f||_{L^q(M^n)} \le C ||f||_{L^p(M^n)}.
\end{equation}
holds for all  $f\in L^p({M}^n).$
Moreover, for $1\le r<q$,  operator $I_{M^n, g, \alpha} : L^p(M^n)  \to  L^r(M^n)$ is a compact  embedding.

\end{rem}

\subsection{Sharp constant and the generalized Yamabe problem}\label{sec-compare-constant}


\subsubsection{Best constant}

We first give a lower bound estimate for the optimal constant $Y_\alpha (M^n, g)$.
\begin{prop}\label{prop-compare-constant}
    $$\xi_\alpha\geq Y_\alpha (\mathbb{S}^n, g_0),$$
where
\begin{equation*}
    \xi_\alpha :=\sup_{f\in C^0(M^n)\backslash\{0\}} \frac{\left|\int_{M^n\times M^n}{f(x)f(y)}|x-y|_g^{\alpha-n}dV_g(x)dV_g(y)\right|} {\|f\|^2_{L^{2n/(n+\alpha)}(M^n)}}.
   \end{equation*}
%
\end{prop}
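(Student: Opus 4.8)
The plan is to prove the lower bound by transplanting the Euclidean extremal bubbles \eqref{buble} onto a shrinking geodesic ball of $M^n$ and passing to two successive limits. Fix a point $P\in M^n$ and geodesic normal coordinates centered at $P$, which identify a geodesic ball $B_\delta(P)$ with the Euclidean ball $B_\delta(0)\subset\mathbb{R}^n$ once $\delta$ is below the injectivity radius. From the expansion $g_{ij}(x)=\delta_{ij}+O(|x|^2)$ one obtains, uniformly for $x,y\in B_\delta(0)$,
\begin{equation*}
dV_g(x)=(1+O(\delta^2))\,dx,\qquad |x-y|_g=(1+O(\delta^2))\,|x-y| ,
\end{equation*}
the distance estimate coming from comparing, in the length functional, the $g$-length of the geodesic joining $x$ to $y$ (which stays in a slightly larger ball, where $|g_{ij}-\delta_{ij}|\le C\delta^2$) with the Euclidean length of the straight segment, and conversely.

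I then take the test function $f=\eta f_\epsilon$, where $f_\epsilon=f_{\epsilon,0}$ is the bubble in \eqref{buble} and $\eta\in C_c^\infty(B_\delta(0))$ satisfies $\eta\equiv 1$ on $B_{\delta/2}(0)$ and $0\le\eta\le 1$; extended by zero, $f$ is a nonnegative element of $C^0(M^n)\setminus\{0\}$, so the absolute value in the definition of $\xi_\alpha$ is immaterial. Since $\alpha-n<0$, inserting the expansions above with the inequalities oriented so as to lose only $(1+O(\delta^2))$ factors gives
\begin{equation*}
\int_{M^n\times M^n}f(x)f(y)|x-y|_g^{\alpha-n}\,dV_g\,dV_g\ \ge\ (1+O(\delta^2))\int_{\mathbb{R}^n\times\mathbb{R}^n}\eta f_\epsilon(x)\,\eta f_\epsilon(y)\,|x-y|^{\alpha-n}\,dx\,dy ,
\end{equation*}
together with $\|f\|_{L^{2n/(n+\alpha)}(M^n)}^2\le(1+O(\delta^2))\,\|\eta f_\epsilon\|_{L^{2n/(n+\alpha)}(\mathbb{R}^n)}^2$. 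Hence the quotient in the definition of $\xi_\alpha$, evaluated at this $f$, is at least $(1+O(\delta^2))$ times the Euclidean Hardy--Littlewood--Sobolev quotient of $\eta f_\epsilon$.

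Next I let $\epsilon\to 0$ with $\delta$ fixed. Since $f_\epsilon$ concentrates at the origin, the contribution of $\{\eta\ne 1\}$ to the double integral and to the $L^{2n/(n+\alpha)}$-norm of $\eta f_\epsilon$ is of lower order, so the Euclidean quotient of $\eta f_\epsilon$ converges to the sharp Euclidean HLS value
\begin{equation*}
\sup_{0\ne h\in L^{2n/(n+\alpha)}(\mathbb{R}^n)}\ \frac{\int_{\mathbb{R}^n\times\mathbb{R}^n}h(x)h(y)|x-y|^{\alpha-n}\,dx\,dy}{\|h\|_{L^{2n/(n+\alpha)}(\mathbb{R}^n)}^2} ,
\end{equation*}
which by Lieb's theorem \cite{Lieb1983} equals $Y_\alpha(\mathbb{S}^n,g_0)$ as in \eqref{8-5-1} (its extremals being exactly the functions \eqref{buble}). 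Therefore $\xi_\alpha\ge(1+O(\delta^2))\,Y_\alpha(\mathbb{S}^n,g_0)$ for every sufficiently small $\delta$, and letting $\delta\to 0$ gives $\xi_\alpha\ge Y_\alpha(\mathbb{S}^n,g_0)$.

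There is no deep obstacle here; the essential work is the uniform metric and distance expansions in normal coordinates together with the bookkeeping of the cutoff error in the Euclidean bubble integrals, both of which are routine \emph{provided} the estimates are kept pointing in the direction that yields a lower bound — that orientation of the inequalities is the one point to watch. Essentially the same transplantation, with $[G_x^{g}(y)]^{(\alpha-n)/(2-n)}$ in place of $|x-y|_g^{\alpha-n}$ and the expansion $G_x^{g}(y)\sim |x-y|_g^{2-n}$ near the diagonal, should then upgrade this to the analogous lower bound $Y_\alpha(M^n,g)\ge Y_\alpha(\mathbb{S}^n,g_0)$.
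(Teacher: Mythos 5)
Your proposal is correct and follows essentially the same route as the paper: transplant the Euclidean bubble onto a small normal-coordinate ball, compare $|x-y|_g$ and $dV_g$ with their Euclidean counterparts up to controlled factors, show the truncation/cutoff error in the Euclidean HLS quotient is of lower order as the bubble concentrates, and then send the ball radius to zero. The only differences are cosmetic (a smooth cutoff versus the paper's hard truncation, and qualitative "lower order" claims where the paper gives the explicit $O((\delta/\lambda)^{-n})$ rates via the convolution identity for $f_\lambda$).
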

\begin{proof} For small positive constant $\lambda>0$, recall that  $f_\lambda(x)$ is given in \eqref{buble}.
Take
    $$\tilde{f}=\begin{cases}f_\lambda(x), &\text{ in } B_\delta(0),\\
        0, &\text{ in }\mathbb{R}^n\backslash B_\delta(0),\end{cases}$$
where $\delta>0$ is a fixed constant to be determined later. Then, for small enough $\lambda$, $\tilde{f}\in L^{2n/(n+\alpha)}(\mathbb{R}^n)$ and
\begin{align}\label{est-1}
    &\int_{\mathbb{R}^n\times\mathbb{R}^n}\tilde{f}(x)\tilde{f}(y)|x-y|^{\alpha-n}dxdy \nonumber\\
    =&\int_{\mathbb{R}^n\times\mathbb{R}^n}f_\lambda(x)f_\lambda(y)|x-y|^{\alpha-n}dxdy \nonumber\\
    &-2\int_{\mathbb{R}^n\times(\mathbb{R}^n\backslash B_\delta(0))}f_\lambda(x)f_\lambda(y)|x-y|^{\alpha-n}dxdy \nonumber\\
    &+\int_{(\mathbb{R}^n\backslash B_\delta(0))\times(\mathbb{R}^n\backslash B_\delta(0))}f_\lambda(x)f_\lambda(y)|x-y|^{\alpha-n}dxdy \nonumber\\
    =&Y_\alpha(\mathbb{S}^n,g_0)\|f_\lambda\|_{L^{\frac {2n}{n+\alpha}}(\mathbb{R}^n)}^2-\textbf{I}+\textbf{II},
\end{align}
where
\begin{align*}
    &\textbf{I}=2\int_{\mathbb{R}^n\times(\mathbb{R}^n\backslash B_\delta(0))}f_\lambda(x)f_\lambda(y)|x-y|^{\alpha-n}dxdy,\\
    &\textbf{II}=\int_{(\mathbb{R}^n\backslash B_\delta(0))\times(\mathbb{R}^n\backslash B_\delta(0))}f_\lambda(x)f_\lambda(y)|x-y|^{\alpha-n}dxdy.
\end{align*}

Note (see, e.g. \cite{Lieb1983} or \cite{Li2004}) 
$$
\int_{\mathbb{R}^n}f_\lambda(x)|x-y|^{\alpha-n}dx=Bf_\lambda^{\frac{n-\alpha}{n+\alpha}}(y),
$$
where $B=\pi^{\frac n2}\frac{\Gamma(\alpha/ 2)}{\Gamma((n+\alpha)/2)}.$
We have
\begin{align*}
    \textbf{I}&=C\int_{\mathbb{R}^n\backslash B_\delta(0)}|f_\lambda|^{\frac{2n}{n+\alpha}}dx\\
    =&C\int_\delta^{+\infty}\left(\frac\lambda{\lambda^2+r^2}\right)^n r^{n-1}dr\\
    =&C\int_{\frac\delta\lambda}^{+\infty}(1+t^2)^{-n}t^{n-1}dt=O(\frac\delta\lambda)^{-n},\quad\text{as }\lambda\rightarrow 0.
\end{align*}
On the other hand, from  HLS inequality,
we know that \text{II} can be estimated  as
\begin{align*}
    &\textbf{II}\leq Y_\alpha(\mathbb{S}^n,g_0)\|f_\lambda\|_{L^{2n/(n+\alpha)}(\mathbb{R}^n\backslash B_\delta(0))}^2\leq C(\frac\delta\lambda)^{-(n+\alpha)}.
\end{align*}
So, for small enough  $\lambda$,
\begin{equation}\label{est-2}
    \frac{\int_{\mathbb{R}^n\times\mathbb{R}^n}\tilde{f}(x)\tilde{f}(y)|x-y|^{\alpha-n}dxdy}{\|\tilde{f}\|_{L^{2n/(n+\alpha)}(\mathbb{R}^n)}^2}\geq Y_\alpha(\mathbb{S}^n,g_0)-C(\frac\delta\lambda)^{-n}.
\end{equation}

For any given point $P\in M^n$, choose a neighbourhood $\Omega_P\subset M^n$ so that for $\delta>0$ small enough, in a normal coordinate,  $\exp(B_{\delta})\subset\Omega_P$ and
$$(1-\epsilon)I\leq g(x)\leq(1+\epsilon)I,\quad \forall x\in B_{\delta}.$$
Thus,
$$(1-\epsilon)|x-y|\leq|x-y|_g\leq(1+\epsilon)|x-y|,  \ \  \quad \forall x, \ y \in B_{\delta}.$$

In the normal coordinates with respect to the center $P\in M^n$, let
$$v(x)=\left\{\begin{array}{lcl}f_\lambda(\exp^{-1}(x)), & in & \exp(B_{\delta})\\
0, & in & M^n\backslash\exp(B_{\delta}). \end{array}\right.$$
Then
\begin{align}
    &\int_{M^n}|v|^{2n/(n+\alpha)}dV\leq(1+\epsilon)^{\frac n2}\int_{B_\delta(0)}|f_\lambda(x)|^{2n/(n+\alpha)}dx, \nonumber\\
    &\int_{M^n}\int_{M^n}\frac{v(x)v(y)}{|x-y|_g^{n-\alpha}}dV_xdV_y =\int_{B_{\delta}(0)}\int_{B_{\delta}(0)}\frac{v(x)v(y)}{|x-y|_g^{n-\alpha}}\sqrt{ det g(x)}\sqrt{delt (y)}dxdy \nonumber\\
    &\geq \int_{B_{\delta}(0)}\int_{B_{\delta}(0)}\frac{f_\lambda(x)f_\lambda(y)} {(1+\epsilon)^{n-\alpha}|x-y|^{n-\alpha}}(1-\epsilon)^ndxdy \nonumber\\
    &=\frac{(1-\epsilon)^n}{(1+\epsilon)^{n-\alpha}}\int_{B_{\delta}(0)}\int_{B_{\delta}(0)}\frac{f_\lambda(x)f_\lambda(y)} {|x-y|^{n-\alpha}}dxdy.
 \label{est-3}
\end{align}
Thus
\begin{align*}
    \xi_\alpha&\geq \frac{\int_{M^n}\int_{M^n}{v(x)v(y)}|x-y|_g^{\alpha-n}dV_xdV_y}{\|v\|_{L^{2n/(n+\alpha)}(M^n)}^2}\\ 
    &\geq\frac{\frac{(1-\epsilon)^{n}}{(1+\epsilon)^{n-\alpha}}\int_{B_{\delta}(0)}\int_{B_{\delta}(0)}f_\lambda(x)f_\lambda(y) |x-y|^{\alpha-n}dxdy}{(1+\epsilon)^{\frac{n+\alpha}2}\|f_\lambda\|_{L^{2n/(n+\alpha)}(B_\delta(0))}^2} \\
    &\geq \frac{(1-\epsilon)^{n-1}}{(1+\epsilon)^{\frac{n+\alpha}2+n-\alpha}} \left(Y_\alpha(\mathbb{S}^n,g_0)-C(\frac\delta\lambda)^{-n}\right).
\end{align*}
Sending  $\epsilon$ and  $\lambda$ to $0$, we obtain the estimate.
\end{proof}

 With a slight modification of the above proof, we have
 \begin{crl} For $\alpha<n,$
  $$Y_\alpha(M^n, g)\geq Y_\alpha (\mathbb{S}^n, g_0).$$
 \end{crl}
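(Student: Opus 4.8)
The plan is to reuse the argument of Proposition \ref{prop-compare-constant} almost verbatim; the only change is that the flat kernel $|x-y|_g^{\alpha-n}$ must be replaced by the Green's function kernel $[G^g_x(y)]^{\frac{\alpha-n}{2-n}}$ that appears in the definition \eqref{infimum} of $J_{g,\alpha}$, hence in the definition \eqref{min} of $Y_\alpha(M^n,g)$. Since $\alpha<n$ (and $n\ge 3$, so that the conformal Laplacian is the relevant operator), the exponent $\frac{\alpha-n}{2-n}=\frac{n-\alpha}{n-2}$ is positive, so $t\mapsto t^{\frac{\alpha-n}{2-n}}$ is increasing on $(0,\infty)$; this monotonicity is what lets a lower bound on $G^g_x(y)$ be converted into a lower bound on the kernel.

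First I would fix an arbitrary $P\in M^n$ and a conformal normal coordinate chart centered at $P$. Using the positivity of $G^g_x$ (guaranteed by the standing positive scalar curvature assumption), the expansion $G^{g}_x(y)=|x-y|^{2-n}+A_x+O(|x-y|)$, and the smooth dependence of the Green's function on its pole, I expect to prove the following kernel comparison: for every $\eta>0$ there is $\delta>0$ such that
$$[G^{g}_x(y)]^{\frac{\alpha-n}{2-n}}\ \ge\ (1-\eta)\,|x-y|^{\alpha-n}\qquad\text{for all }x,y\in\exp_P(B_\delta),$$
with $|x-y|$ the Euclidean distance in the chart. Indeed, for $|x-y|<\delta$ one has $G^{g}_x(y)=|x-y|^{2-n}\bigl(1+O(|x-y|^{n-2})\bigr)$, and since $n\ge 3$ this error tends to $0$ with $\delta$, uniformly in $x$; raising to the positive power $\frac{\alpha-n}{2-n}$ preserves the inequality.

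Next I would take exactly the truncated bubble of the proof of Proposition \ref{prop-compare-constant}, namely $v(x)=f_\lambda(\exp_P^{-1}(x))$ on $\exp_P(B_\delta)$ and $v\equiv 0$ off this set, with $f_\lambda$ from \eqref{buble}. The denominator $\|v\|_{L^{2n/(n+\alpha)}(M^n)}^2$ is bounded above by $(1+\epsilon)^{n/2}\|f_\lambda\|_{L^{2n/(n+\alpha)}(B_\delta)}^2$ just as before. For the numerator, the kernel comparison above together with $(1-\epsilon)|x-y|\le|x-y|_g$ gives
$$\int_{M^n}\!\int_{M^n} v(x)v(y)[G^{g}_x(y)]^{\frac{\alpha-n}{2-n}}\,dV_x\,dV_y\ \ge\ (1-\eta)(1-\epsilon)^{n}\int_{B_\delta}\!\int_{B_\delta} f_\lambda(x)f_\lambda(y)\,|x-y|^{\alpha-n}\,dx\,dy,$$
and the right-hand side is $\ge(1-\eta)(1-\epsilon)^{n}\bigl(Y_\alpha(\mathbb{S}^n,g_0)-C(\delta/\lambda)^{-n}\bigr)\|f_\lambda\|_{L^{2n/(n+\alpha)}(B_\delta)}^2$ by the estimates \eqref{est-1}--\eqref{est-2} already in hand. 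Hence $Y_\alpha(M^n,g)\ge J_{g,\alpha}(v)\ge (1-\eta)(1-\epsilon)^{n}(1+\epsilon)^{-n/2}\bigl(Y_\alpha(\mathbb{S}^n,g_0)-C(\delta/\lambda)^{-n}\bigr)$; letting $\lambda\to0$, then $\delta\to0$ (so $\epsilon,\eta\to0$), yields $Y_\alpha(M^n,g)\ge Y_\alpha(\mathbb{S}^n,g_0)$.

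The only genuinely new ingredient beyond Proposition \ref{prop-compare-constant} is the kernel comparison of the second paragraph, and that is where I expect the (mild) difficulty to lie: one must check that the $O(|x-y|^{n-2})$ correction in the Green's function expansion is uniform as the pole $x$ ranges over a neighborhood of $P$, not merely for a single fixed pole. Because only a one-sided bound is needed and the leading coefficient in the expansion is universally $1$, no information about the mass term $A_x$ (nor local conformal flatness) enters here; those finer points are needed only for the matching upper bound and the rigidity statement discussed in the introduction.
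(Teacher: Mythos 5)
Your proposal is correct and matches the paper's intent exactly: the paper proves this corollary by ``a slight modification'' of the proof of Proposition \ref{prop-compare-constant}, which is precisely what you carry out, with the kernel comparison $[G^g_x(y)]^{\frac{\alpha-n}{2-n}}\ge(1-\eta)|x-y|^{\alpha-n}$ near the pole being the only new ingredient. Your identification of the uniformity of the Green's function expansion in the pole as the one point needing care (and the observation that neither the sign of $A$ nor local conformal flatness is needed for this one-sided bound) is exactly right.
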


Similar to Aubin's approach for solving Yamabe problem, we will establish an $\epsilon$-level sharp Hardy-Littlewood-Sobolev inequality for solving general curvature equations.

For $\alpha \in (0, n)$, $p>1$ and $q$ satisfying \eqref{power},
define
$$
N_{\alpha ,p}=\sup_{f\in L^p(\mathbb{R}^n)\backslash\{0\}} \frac{\left|\int_{\mathbb{R}^n\times \mathbb{R}^n}{f(x)f(y)}|x-y|^{\alpha-n}dxdy\right|} {\|f\|^2_{L^{p}(\mathbb{R}^n)}}.
$$

\begin{prop}[$\epsilon$-Level Inequality]\label{epsilon_ineq_prop} \ For $\alpha \in (0, n)$, $p>1$,
let $q$ be given by \eqref{power}.  For any given $\epsilon>0$, there is a constant $C(\epsilon)>0,$ such that
\begin{equation}\label{epsilon ineq}
\|I_{\alpha} f\|_{L^q(M^n)}^p\leq (N_{\alpha ,p}+\epsilon)^p\|f\|_{L^p(M^n)}^p+C(\epsilon)\|I_{\alpha+1}f\|_{L^q(M^n)}^p
\end{equation}
holds for all $f \in {L^p(M^n)}.$
\end{prop}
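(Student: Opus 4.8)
The plan is to localize the operator $I_\alpha$ using a partition of unity, transplant each piece to $\mathbb{R}^n$ where the Euclidean sharp constant $N_{\alpha,p}$ is available, and absorb all error terms into a controlled multiple of $\|I_{\alpha+1}f\|_{L^q}$. First I would fix a finite atlas $(\Omega_i,\phi_i)_{i=1}^N$ by geodesic normal coordinates, each $\Omega_i$ of diameter less than some small $\rho>0$, together with a subordinate smooth partition of unity $\{\alpha_i\}$, and I would arrange that the metric in each chart satisfies $(1-\eta)\delta_{kl}\le g_{kl}\le (1+\eta)\delta_{kl}$ on the chart, hence $(1-\eta)|x-y|\le |x-y|_g\le (1+\eta)|x-y|$ for $x,y$ in the same chart, with $\eta=\eta(\rho)\to 0$ as $\rho\to 0$. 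The choice of $\rho$ (equivalently $\eta$) will be dictated by $\epsilon$ at the end.

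The key decomposition is to split the kernel into a near-diagonal piece and a far piece. For $x\in\Omega_i$ write
$$
I_\alpha f(x)=\int_{|y-x|_g<\rho}\frac{f(y)}{|x-y|_g^{n-\alpha}}\,dV_y+\int_{|y-x|_g\ge \rho}\frac{f(y)}{|x-y|_g^{n-\alpha}}\,dV_y.
$$
For the far piece, the kernel is bounded by $\rho^{-(n-\alpha)}\le \rho^{-1}\cdot |x-y|_g^{-(n-\alpha-1)}$ pointwise on $\{|x-y|_g\ge\rho\}$ (using $\alpha<n$, and comparing $\rho^{-(n-\alpha)}$ with $|x-y|_g^{\alpha+1-n}$ which is larger there); more cleanly, on $\{|x-y|_g\ge\rho\}$ one has $|x-y|_g^{-(n-\alpha)}\le \rho^{-1}|x-y|_g^{-(n-\alpha-1)} = \rho^{-1}|x-y|_g^{-(n-(\alpha+1))}$, so the far piece is pointwise dominated by $\rho^{-1}I_{\alpha+1}(|f|)(x)$, whence its $L^q$ norm contributes a term $\le C\rho^{-1}\|I_{\alpha+1}f\|_{L^q}$, which is exactly of the form allowed with $C(\epsilon)$. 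For the near-diagonal piece, on each chart I would pass to Euclidean coordinates, use the metric comparison to replace $|x-y|_g$ by $|x-y|$ at the cost of factors $(1\pm\eta)$ and replace $dV_y$ by $dx$ at the cost of $(1\pm\eta)^{n/2}$, multiply by the cutoff $\alpha_i$, extend $f$ by zero outside the chart, and apply the Euclidean inequality $\|I_\alpha^{\mathbb{R}^n}(\alpha_i f)\|_{L^q(\mathbb{R}^n)}\le N_{\alpha,p}\|\alpha_i f\|_{L^p(\mathbb{R}^n)}$. Summing over $i$ with $\sum \alpha_i\equiv 1$ and using that $\alpha_i$ have bounded overlap (or, to be safe, raising to power and using a discrete Minkowski/Hölder bound with the number $N$ of charts absorbed, then shrinking $\rho$ so the overlap multiplicity is $1+o(1)$), the main term is $(N_{\alpha,p}+o_\rho(1))^p\|f\|_{L^p(M^n)}^p$, and I would finally choose $\rho$ small enough that $o_\rho(1)<\epsilon$, setting $C(\epsilon)=C\rho(\epsilon)^{-p}$.

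The main obstacle is the bookkeeping in the summation step: showing that gluing the charted Euclidean estimates does not inflate the constant $N_{\alpha,p}$ by a factor depending on $N$. The honest way is to note that the near-diagonal contribution from $x\in\Omega_i$ only involves $f$ on the $\rho$-neighborhood of $\Omega_i$, and if $\rho$ is chosen much smaller than the Lebesgue number of the cover, these neighborhoods have overlap multiplicity that can be taken as small as desired — morally $1+\epsilon'$ — so that $\|f\|_{L^p}^p$ essentially does not get counted more than once; the cross terms and the cutoff-commutator terms (coming from $\alpha_i$ not being constant) are lower order in $\rho$ because differentiating $\alpha_i$ produces a bounded factor while the loss of smallness is compensated by the same mechanism that controls the far piece, so they too can be dumped into $C(\epsilon)\|I_{\alpha+1}f\|_{L^q}$. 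Everything else — the metric comparison, the triangle-inequality estimate $|x-y|_g\ge\rho\Rightarrow$ far-kernel $\lesssim\rho^{-1}\times$ ($\alpha{+}1$)-kernel, and the appeal to Proposition~\ref{prop-sharp-ineq} for the global $L^p\to L^q$ boundedness that guarantees $I_{\alpha+1}f\in L^q$ in the first place — is routine.
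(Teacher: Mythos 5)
Your overall architecture (localize to small charts where the Euclidean sharp constant is available, and absorb all errors into $C(\epsilon)\|I_{\alpha+1}f\|_{L^q}$) is the same as the paper's, and your near/far splitting of the kernel is a sound alternative mechanism for producing the $I_{\alpha+1}$ term: on $\{|x-y|_g\ge\rho\}$ one indeed has $|x-y|_g^{\alpha-n}\le\rho^{-1}|x-y|_g^{\alpha+1-n}$. (The paper instead generates $I_{\alpha+1}$ from the commutator $\eta_{i}I_\alpha f-I_\alpha(\eta_{i}f)$, using $|\eta_{i}(x)-\eta_{i}(y)|\le\max|\nabla\eta_{i}|\,|x-y|_g$; either route is fine.)

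However, the step you yourself flag as ``the main obstacle'' --- recombining the charted estimates without inflating $N_{\alpha,p}$ --- is not resolved by what you propose, and it is the heart of the proposition. Absorbing ``the number $N$ of charts'' is not permissible: $N=N(\rho)\to\infty$ as $\rho\to0$, and the resulting factor multiplies the main term $\|f\|_{L^p}^p$, not the error term. The claim that the overlap multiplicity of the $\rho$-neighborhoods can be made $1+o(1)$ is also false: any cover of a compact connected manifold has pointwise multiplicity at least $2$ somewhere (and a dimensional constant near the ``corners'' of a disjoint decomposition), and since the quantity you must control is $\sum_i\|f\|_{L^p(U_i)}^p=\int_{M^n}|f|^p\,\mu\,dV$ with $U_i$ the enlarged pieces and $\mu(y)=\#\{i:\ y\in U_i\}$, a function $f$ concentrated on the overlap set defeats any bound of the form $\sum_i\|f\|_{L^p(U_i)}^p\le(1+\epsilon')\|f\|_{L^p}^p$. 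The device you are missing, which is the paper's, is to normalize the partition of unity by $\sum_{i}\eta_{i}^{p}\equiv 1$ and to write
\begin{equation*}
\|I_\alpha f\|_{L^q}^p=\|(I_\alpha f)^p\|_{L^{q/p}}=\Big\|\sum_i\eta_i^p\,(I_\alpha f)^p\Big\|_{L^{q/p}}\le\sum_i\|\eta_i\,I_\alpha f\|_{L^q}^{p},
\end{equation*}
i.e.\ to apply the triangle inequality in $L^{q/p}$ (legitimate since $q>p$) so that the estimate is assembled at the level of $p$-th powers; the localized masses then recombine exactly, $\sum_i\|\eta_i f\|_{L^p}^p=\int|f|^p\sum_i\eta_i^p=\|f\|_{L^p}^p$, with no multiplicity loss. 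With that substitution your near/far decomposition goes through; without it the main term carries an uncontrolled, $\rho$-dependent constant.
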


\begin{proof}

We only need to prove \eqref{epsilon ineq} 
for nonnegative function $f\in C(M^n).$

For fixed $\epsilon>0$, let $\{\eta_{i,\epsilon}\}_{i=1}^k$ be a partition of the unit covering, such that $0\le \eta_{i,\epsilon}\le 1$ for all $i=1, \cdots, k$ and $\sum_{i=1}^k \eta_{i,\epsilon}^p=1,$ and for all $i=1, \cdots, k,$
\begin{equation}\label{2-3}
||I_{\alpha} (\eta_{i,\epsilon}f)||_{L^q(\text{supp}\{\eta_{i,\epsilon}\})} \le (N_{\alpha ,p}+\epsilon)|| \eta_{i,\epsilon}f||_{L^p(\text{supp}\{\eta_{i,\epsilon}\})}.
\end{equation}
Thus
\begin{eqnarray}
&&||I_{\alpha} f||_{L^q(M^n)}^p
= ||(I_{\alpha} f)^p||_{L^{q/p}(M^n)}\nonumber\\
&=&||\sum_{i=1}^k  \eta_{i,\epsilon}^p (I_{\alpha} f)^p||_{L^{q/p}(M^n)}
\leq\sum_{i=1}^k  || \eta_{i,\epsilon}^p (I_{\alpha} f)^p||_{L^{q/p}(\text{supp}\{\eta_{i,\epsilon}\})}\nonumber\\
&=&\sum_{i=1}^k  ||\eta_{i,\epsilon} I_{\alpha} f||_{L^{q}(\text{supp}\{\eta_{i,\epsilon}\})}^p\nonumber\\
&\le &\sum_{i=1}^k \left( || I_{\alpha} (\eta_{i,\epsilon} f)||_{L^{q}(\text{supp}\{\eta_{i,\epsilon}\})}\right.\nonumber\\
& &\hspace{1.0cm}\left. + || \eta_{i,\epsilon} I_{\alpha}  f- I_{\alpha} (\eta_{i,\epsilon} f)||_{L^{q}(\text{supp}\{\eta_{i,\epsilon}\})}\right)^p.
\label{2-4}
\end{eqnarray}

For fixed $i$,
\begin{align}\label{formula error}
|| \eta_{i,\epsilon} I_{\alpha}  f-& I_{\alpha} (\eta_{i,\epsilon} f)||_{L^{q}(\text{supp}\{\eta_{i,\epsilon}\})}^q\nonumber\\
=& \int_{\text{supp}\{\eta_{i,\epsilon}\}} |\int_{M^n}{[\eta_{i,\epsilon}(x)- \eta_{i,\epsilon}(y)]f(y)}|x-y|_g^{\alpha-n}  dV_y|^q dV_x \nonumber\\
\le& (\max|\nabla \eta_{i,\epsilon}|)\cdot \int_{\text{supp}\{\eta_{i,\epsilon}\}} |\int_{M^n}{f(y) \text{supp}{\eta_{i,\epsilon}}|x-y|_g}|x-y|_g^{\alpha-n}  dV_y|^q dV_x\nonumber\\
\le& C(\max|\nabla \eta_{i,\epsilon}|)\cdot(1+\epsilon)^q ||I_{\alpha+1}f||_{L^{q}(M^n)}^q.
\end{align}
Bringing \eqref{2-3} and \eqref{formula error} into \eqref{2-4}, we have
\begin{align*}
||I_{\alpha} f||^p_{L^q(M^n)} &\le  \sum_{i=1}^k \big [(N_{\alpha, p}+\epsilon)||\eta_{i,\epsilon}f||_{L^p(\text{supp}\{\eta_{i,\epsilon}\})}+C ||I_{\alpha+1}f||_{L^{q}(M^n)} \big]^p\\
&\le \sum_{i=1}^k  (N_{\alpha, p}+\epsilon)^p\cdot(1+\epsilon)|| \eta_{i,\epsilon}f||_{L^p(M^n)}^p+C(\epsilon) ||I_{\alpha+1}f||_{L^{q}(M^n)}^p \\
&= (N_{\alpha, p}+\epsilon)^p\cdot(1+\epsilon)||f||^p_{L^p(M^n)}+C(\epsilon) ||I_{\alpha+1}f||_{L^{q}(M^n)}^p.
\end{align*}
\end{proof}

It is obvious that a similar $\epsilon$-level inequality also holds for operator $ I_{M^n, g, \alpha}$.
\begin{crl}\ For $\alpha \in (0, n)$, $p>1$,
let $q$ be given by \eqref{power}.  For any given $\epsilon>0$, there is a constant $C(\epsilon)>0,$ such that
\begin{equation}\label{epsilon ineq-1}
\| I_{M^n, g, \alpha} f\|_{L^q(M^n)}^p\leq (N_{\alpha ,p}+\epsilon)^p\|f\|_{L^p(M^n)}^p+C(\epsilon)\| I_{M^n, g, \alpha+1}f\|_{L^q(M^n)}^p
\end{equation}
holds for all $f \in {L^p(M^n)}.$
\label{cor_e}
\end{crl}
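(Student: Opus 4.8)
The plan is to mimic the proof of Proposition~\ref{epsilon_ineq_prop} almost verbatim, replacing the Riesz kernel $|x-y|_g^{\alpha-n}$ by the kernel $[G^g_x(y)]^{\frac{\alpha-n}{2-n}}$ that defines $I_{M^n,g,\alpha}$. The essential point is that near the diagonal the Green's function satisfies $G^g_x(y)\sim|x-y|_g^{2-n}$, hence $[G^g_x(y)]^{\frac{\alpha-n}{2-n}}\sim|x-y|_g^{\alpha-n}$, so the local behaviour (which is all that governs the sharp constant $N_{\alpha,p}$) is exactly the same as for $I_\alpha$. Away from the diagonal the kernel is smooth and bounded, so it only contributes a harmless lower-order term controlled by an $L^1\to L^q$ type estimate, which in turn is dominated by $\|I_{M^n,g,\alpha+1}f\|_{L^q}$ just as in \eqref{formula error}.

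Concretely, first I would fix $\epsilon>0$ and choose a partition of unity $\{\eta_{i,\epsilon}\}_{i=1}^k$ subordinate to a covering by coordinate charts so small that, after transplanting to $\mathbb{R}^n$ via the exponential map and using the expansion $G^g_x(y)=|x-y|_g^{2-n}(1+O(|x-y|_g))$ together with the Euclidean sharp HLS constant $N_{\alpha,p}$, one has
\begin{equation*}
\|I_{M^n,g,\alpha}(\eta_{i,\epsilon}f)\|_{L^q(\mathrm{supp}\{\eta_{i,\epsilon}\})}\le (N_{\alpha,p}+\epsilon)\|\eta_{i,\epsilon}f\|_{L^p(\mathrm{supp}\{\eta_{i,\epsilon}\})}
\end{equation*}
for every $i$. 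Then, exactly as in \eqref{2-4}, I would write $\|I_{M^n,g,\alpha}f\|_{L^q}^p=\|\sum_i\eta_{i,\epsilon}^p(I_{M^n,g,\alpha}f)^p\|_{L^{q/p}}$, apply the triangle inequality in $L^{q/p}$, and then split each $\eta_{i,\epsilon}I_{M^n,g,\alpha}f$ into $I_{M^n,g,\alpha}(\eta_{i,\epsilon}f)$ plus the commutator error $\eta_{i,\epsilon}I_{M^n,g,\alpha}f-I_{M^n,g,\alpha}(\eta_{i,\epsilon}f)$.

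The one genuinely new estimate is the bound on the commutator term. Here
\begin{equation*}
\eta_{i,\epsilon}(x)I_{M^n,g,\alpha}f(x)-I_{M^n,g,\alpha}(\eta_{i,\epsilon}f)(x)=\int_{M^n}[\eta_{i,\epsilon}(x)-\eta_{i,\epsilon}(y)][G^g_x(y)]^{\frac{\alpha-n}{2-n}}f(y)\,dV_g(y),
\end{equation*}
and using $|\eta_{i,\epsilon}(x)-\eta_{i,\epsilon}(y)|\le(\max|\nabla\eta_{i,\epsilon}|)|x-y|_g$ together with $[G^g_x(y)]^{\frac{\alpha-n}{2-n}}|x-y|_g\le C|x-y|_g^{\alpha+1-n}\le C[G^g_x(y)]^{\frac{\alpha+1-n}{2-n}}$ near the diagonal (and a trivial bound away from it) reduces the commutator to $C(\max|\nabla\eta_{i,\epsilon}|)\|I_{M^n,g,\alpha+1}f\|_{L^q(M^n)}$, in analogy with \eqref{formula error}. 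Feeding this and the local inequality into the $L^{q/p}$ triangle inequality and using $\sum_i\eta_{i,\epsilon}^p=1$ gives \eqref{epsilon ineq-1}.

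I expect the main obstacle to be purely technical: justifying the comparison $[G^g_x(y)]^{\frac{\alpha-n}{2-n}}\asymp|x-y|_g^{\alpha-n}$ uniformly on a neighbourhood of the diagonal with explicit control of the multiplicative error as the chart shrinks, so that the Euclidean constant $N_{\alpha,p}$ (and not some slightly larger quantity) appears in the local inequality. This requires the standard expansion of the conformal Laplacian's Green's function in normal coordinates — precisely the expansion $G^{g_0}_x(y)=|y|^{2-n}+A+O(|y|)$ quoted in the introduction — together with the positivity and smoothness of $G^g_x(y)$ away from $y=x$ on a compact manifold with positive scalar curvature. Once that comparison is in hand, the rest of the argument is identical to the proof of Proposition~\ref{epsilon_ineq_prop}, so the corollary follows.
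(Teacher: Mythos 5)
Your proposal is correct and follows exactly the route the paper intends: the paper offers no proof of Corollary \ref{cor_e} beyond the remark that it is "obvious" from Proposition \ref{epsilon_ineq_prop}, and your elaboration — using $G^g_x(y)=|x-y|_g^{2-n}\bigl(1+O(|x-y|_g^{n-2})\bigr)$ near the diagonal so that the kernel of $I_{M^n,g,\alpha}$ is $(1+o(1))|x-y|_g^{\alpha-n}$ locally, together with boundedness of the kernels off the diagonal for the commutator estimate — supplies precisely the missing details. The only (harmless) imprecision is writing the multiplicative correction as $1+O(|x-y|_g)$ rather than $1+O(|x-y|_g^{n-2})$; either way it tends to $1$ as the charts shrink, which is all the argument needs.
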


Based on the $\epsilon-$ level sharp HLS inequality, we can establish the criterior for the existence of maximizer to the following  quotient energy.

\begin{prop}\label{prop-criterior}
If  $$  \xi_{\alpha,p} :=\sup_{f\in L^p(M^n)\backslash\{0\}} \frac{\left|\int_{M^n\times M^n}{f(x)f(y)}|x-y|_g^{\alpha-n}dV_g(x)dV_g(y)\right|} {\|f\|^2_{L^{p}(M^n)}}>N_{\alpha ,p}, $$
 then the supremum $\xi_{\alpha, p}$ is attained.
\end{prop}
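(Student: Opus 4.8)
\medskip
\noindent\textbf{Proof proposal.} The plan is to run an Aubin-type concentration argument in which the $\epsilon$-level inequality (Proposition~\ref{epsilon_ineq_prop}) plays the role that Aubin's refined Sobolev inequality plays in the Yamabe problem. Write $E(f):=\int_{M^n\times M^n}f(x)f(y)|x-y|_g^{\alpha-n}\,dV_g(x)\,dV_g(y)=\langle f, I_\alpha f\rangle$. Since the kernel is positive, replacing $f$ by $|f|$ never decreases $E(f)$, so I will start from a maximizing sequence $\{f_j\}$ with $f_j\ge 0$, $\|f_j\|_{L^p(M^n)}=1$, and $E(f_j)\to\xi_{\alpha,p}$. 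By reflexivity I pass to a subsequence with $f_j\rightharpoonup f$ in $L^p(M^n)$. Since $I_{\alpha+1}$ maps $L^p(M^n)$ compactly into $L^q(M^n)$ (the analogue of Proposition~\ref{prop-sharp-ineq} for the order $\alpha+1$, whose target exponent strictly exceeds $q$), I get $I_{\alpha+1}f_j\to I_{\alpha+1}f$ strongly in $L^q(M^n)$, hence $I_{\alpha+1}(f_j-f)\to 0$ in $L^q(M^n)$. I will treat the borderline exponent $p=\tfrac{2n}{n+\alpha}$, which is the only nontrivial case: when $p>\tfrac{2n}{n+\alpha}$ one has $q>p'$, so $I_\alpha\colon L^p(M^n)\to L^{p'}(M^n)$ is already compact, $E$ is weakly sequentially continuous, and $E(f)=\xi_{\alpha,p}>0$ produces a nonzero maximizer directly.

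In the borderline case $p'=q$, and the first step is to split the energy. Using that $I_\alpha$ is self-adjoint (the distance kernel is symmetric) and that $I_\alpha f\in L^{p'}(M^n)=(L^p(M^n))^*$ by Proposition~\ref{prop-sharp-ineq}, the cross terms vanish in the limit:
$$E(f_j)=E(f)+E(f_j-f)+2\langle I_\alpha f,\,f_j-f\rangle=E(f)+E(f_j-f)+o(1).$$
Next I apply \eqref{epsilon ineq} to $g_j:=f_j-f$: for every $\epsilon>0$,
$$\|I_\alpha g_j\|_{L^q(M^n)}\le (N_{\alpha,p}+\epsilon)\|g_j\|_{L^p(M^n)}+C(\epsilon)^{1/p}\|I_{\alpha+1}g_j\|_{L^q(M^n)}.$$
Since $p'=q$, H\"older gives $E(g_j)=\langle g_j, I_\alpha g_j\rangle\le\|g_j\|_{L^p(M^n)}\|I_\alpha g_j\|_{L^q(M^n)}$; feeding in the bound above, using $\|I_{\alpha+1}g_j\|_{L^q}\to 0$, and letting $\epsilon\to 0$, I obtain $\limsup_j E(g_j)\le N_{\alpha,p}\big(\limsup_j\|g_j\|_{L^p(M^n)}^p\big)^{2/p}$. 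The Brezis--Lieb splitting $\|g_j\|_{L^p}^p=1-\|f\|_{L^p}^p+o(1)$ together with the elementary inequality $(1-t)^r\le 1-t^r$ for $t\in[0,1]$ and $r=2/p\ge 1$ (here $p\le 2$) then give $\limsup_j E(g_j)\le N_{\alpha,p}\big(1-\|f\|_{L^p(M^n)}^2\big)$.

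Combining the two displays yields $\xi_{\alpha,p}\le E(f)+N_{\alpha,p}\big(1-\|f\|_{L^p}^2\big)$. If $f\equiv 0$ this forces $\xi_{\alpha,p}\le N_{\alpha,p}$, contradicting the hypothesis; hence $f\not\equiv 0$, and $0<\|f\|_{L^p}\le 1$ by weak lower semicontinuity of the norm. Then, using $N_{\alpha,p}<\xi_{\alpha,p}$ and $1-\|f\|_{L^p}^2\ge 0$,
$$E(f)\ \ge\ \xi_{\alpha,p}-N_{\alpha,p}\big(1-\|f\|_{L^p}^2\big)\ \ge\ \xi_{\alpha,p}-\xi_{\alpha,p}\big(1-\|f\|_{L^p}^2\big)\ =\ \xi_{\alpha,p}\|f\|_{L^p}^2,$$
while $E(f)\le\xi_{\alpha,p}\|f\|_{L^p}^2$ by the very definition of $\xi_{\alpha,p}$. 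Hence equality holds and $f/\|f\|_{L^p}$ attains the supremum.

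The hard part is exactly this borderline case: there $I_\alpha$ is not compact from $L^p$ into $L^{p'}$ --- this is the ``bubbling'' of the Yamabe problem --- and it is precisely here that Proposition~\ref{epsilon_ineq_prop} is indispensable, since it caps the non-compact part of the energy by the flat-space constant $N_{\alpha,p}$, so that the strict gap $\xi_{\alpha,p}>N_{\alpha,p}$ forbids the maximizing sequence from escaping into a concentrating bubble. Two technical points still have to be handled with care: (i) the Brezis--Lieb norm splitting requires $f_j\to f$ a.e., which does not follow from weak $L^p$ convergence by itself and should be arranged by taking the maximizing sequence to consist of suitably regular extremals of the subcritical problems $\xi_{\alpha,p_k}$ with $p_k\downarrow p$, for which a.e.\ convergent subsequences are available; and (ii) one must record the compactness of $I_{\alpha+1}\colon L^p(M^n)\to L^q(M^n)$, which falls out of the proof of Proposition~\ref{prop-sharp-ineq}.
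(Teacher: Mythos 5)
Your argument is essentially the paper's own proof: both hinge on the compactness of $I_{\alpha+1}\colon L^p(M^n)\to L^q(M^n)$, the $\epsilon$-level inequality applied to the difference $f_j-f_*$, and a Brezis--Lieb splitting, with only the bookkeeping differing (you normalize $\|f_j\|_{L^p}=1$ and split the bilinear energy directly via weak convergence, while the paper normalizes $\|I_\alpha f_j\|_{L^q}=1$ and applies Brezis--Lieb to $\|I_\alpha f_j\|_{L^q}^q$ as well). The two caveats you flag --- the implicit restriction to the exponent with $q=p'$, and the a.e.\ convergence needed for the Brezis--Lieb norm splitting --- are present, and equally unaddressed, in the paper's version.
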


\begin{proof}{

Let $q$ be given by \eqref{power}. Choose  a maximizing sequence $\{f_i\}_{i=1}^{+\infty}\subset L^p(M^n)$ such that $\|I_{\alpha}f_i\|_{L^q(M^n)}=1$. Without loss of generality, we can also  assume that $f_i \ge 0$.

{Claim:} there exists a subsequence (still denoted as $\{f_i\}$) and $f_*\in L^p(M^n)$ such that
    $$\begin{array}{ccc}
        f_i\rightharpoonup f_*&\text{weakly in}&L^p(M^n),\\
        I_{\alpha}f_i\rightharpoonup I_{\alpha}f_*&\text{weakly in}&L^q(M^n),\\
        I_{\alpha+1}f_i\rightarrow I_{\alpha+1}f_*&\text{strongly in}&L^q(M^n).
    \end{array}$$
In fact, from  H\"older inequality, we know   $\|f_i\|_{L^p}(M^n)\le  \frac 1{\xi_\alpha}+1$ for large $i$, thus  $\{f_i\}$ is a bounded sequence in $L^p(M^n)$.  So, there exists a subsequence (still denoted as $\{f_i\}$) and a function $f_*\in L^p(M^n)$ such that
    $$f_i\rightharpoonup f_*\quad\text{weakly in}\quad L^p(M^n).$$
Meanwhile, for any $g\in L^{q'}(M^n)$, we have $I_\alpha g\in L^{p'}(M^n)$ and
    $$\|I_{\alpha+1}g\|_{L^{p'}(M^n)}\leq  C(M^n)\|I_{\alpha}g\|_{L^{p'}(M^n)}<+\infty,$$
where $q'$ is the conjugate of $q$ and $p'$ is the conjugate of $p$.
So,
    $$<I_\alpha f_i-I_\alpha f_*, g>=<f_i-f_*, I_\alpha g>\rightarrow 0\quad\text{ as }\quad i\rightarrow +\infty,$$
and
    $$<I_{\alpha+1} f_i-I_{\alpha+1} f_*, g>=<f_i-f_*, I_{\alpha+1} g>\rightarrow 0\quad\text{ as }\quad i\rightarrow +\infty.$$
Combining the compactness of $\{I_{\alpha+1}f_i\}$ concluded from  Proposition \ref{prop-sharp-ineq}, we have
    $$I_{\alpha+1}f_i\rightarrow I_{\alpha+1}f_*\quad\text{strongly in}\quad L^q(M^n).$$

Applying Brezis-Lieb Lemma \cite{BL1983}, we have
\begin{align*}
&\|f_i\|_{L^p(M^n)}^p-\|f_i-f_*\|_{L^p(M^n)}^p-\|f_*\|_{L^p(M^n)}^p=o(1),\\
&1-\|I_{\alpha}f_i-I_{\alpha}f_*\|_{L^q(M^n)}^q-\|I_{\alpha}f_*\|_{L^q(M^n)}=o(1).
\end{align*}
Also note:
$$
{\xi_{\alpha, p}}=\frac{(I_\alpha f_i, f_i)} {\|f_i\|^2_{L^{p}(M^n)}} +o(1)\le \frac 1{ \|f_i\|_{L^p(M^n)}}+o(1).
$$
Thus,
\begin{align*}
\frac 1{\xi_{\alpha, p}^p}\ge &\|f_i\|_{L^p(M^n)}^p+o(1)\\
=&\|f_i-f_*\|_{L^p(M^n)}^p+\|f_*\|_{L^p(M^n)}^p+o(1) \\
\geq &\frac 1{(N_{\alpha ,p}+\epsilon)^p}\|I_{\alpha}f_i-I_{\alpha}f_*\|_{L^q(M^n)}^p+\frac 1{\xi_{\alpha, p}^p}\|I_{\alpha}f_*\|_{L^q(M^n)}^p\\
&-\frac{C(\epsilon)}{(N_{\alpha ,p}+\epsilon)^p}\|I_{\alpha+1}f_i-I_{\alpha+1}f_*\|_{L^q(M^n)}^p+o(1)  \quad \text{(by $\epsilon$-level inequality (\ref{epsilon ineq}))}\\
=&\left(\frac 1{(N_{\alpha ,p}+\epsilon)^p}-\frac 1{\xi_{\alpha, p}^p}\right)\|I_{\alpha}f_i-I_{\alpha}f_*\|_{L^q(M^n)}^p\\
&+\frac 1{\xi_{\alpha, p}^p}\left(\|I_{\alpha}f_i-I_{\alpha}f_*\|_{L^q(M^n)}^p+\|I_{\alpha}f_*\|_{L^q(M^n)}^p\right)\\
&-\frac{C(\epsilon)}{(N_{\alpha ,p}+\epsilon)^p}\|I_{\alpha+1}f_i-I_{\alpha+1}f_*\|_{L^q(M^n)}^p+o(1)\\
\geq &\left(\frac 1{N_{\alpha ,p}+\epsilon)^p}-\frac 1{\xi_{\alpha, p}^p}\right)\|I_{\alpha}f_i-I_{\alpha}f_*\|_{L^q(M^n)}^p\\
&+\frac 1{\xi_{\alpha, p}^p}\left(\|I_{\alpha}f_i-I_{\alpha}f_*\|_{L^q(M^n)}^q+\|I_{\alpha}f_*\|_{L^q(M^n)}^q\right)+o(1)\\
=&\left(\frac 1{(N_{\alpha ,p}+\epsilon)^p}-\frac 1{\xi_{\alpha, p}^p}\right)\|I_{\alpha}f_i-I_{\alpha}f_*\|_{L^q(M^n)}^p +\frac 1{\xi_{\alpha, p}^p}+o(1).
\end{align*}
So
$$\lim_{i\rightarrow+\infty}\|I_{\alpha}f_i-I_{\alpha}f\|_{L^q(M^n)}=0.$$
On the other hand,
$$\|f_*\|_{L^p(M^n)}\le \liminf_{i\rightarrow+\infty}\|f_i\|_{L^p(M^n)},$$
we thus know
$$
\lim_{i \to \infty} {\frac{(I_\alpha f_i, f_i)} {\|f_i\|^2_{L^{p}(M^n)}}} \le \frac{(I_\alpha f_*, f_*)} {\|f_*\|^2_{L^{p}(M^n)}}
$$
So $f_*\in L^p(M^n)$ is a maximizer.
}
\end{proof}

Similarly, based on Corollary \ref{cor_e}, we can obtain the following
\begin{crl}If   $$  \xi_{\alpha,p, G} :=\sup_{f\in L^p(M^n)\backslash\{0\}} \frac{\left|\int_{M^n\times M^n}{f(x)f(y)}[G^g_x(y)]^{\frac{\alpha -n}{2-n}}dV_g(x)dV_g(y)\right|} {\|f\|^2_{L^{p}(M^n)}}>N_{\alpha ,p}, $$
 then the supremum $\xi_{\alpha, p, G}$ is attained.
\label{cor_exist}
\end{crl}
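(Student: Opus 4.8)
The plan is to run the proof of Proposition \ref{prop-criterior} essentially verbatim, with the operator $I_\alpha$ replaced throughout by $I_{M^n, g, \alpha}$ and with Remark \ref{rem2.2} and Corollary \ref{cor_e} used in place of Proposition \ref{prop-sharp-ineq} and Proposition \ref{epsilon_ineq_prop}. The two structural inputs that make this transcription legitimate are: first, the kernel $[G^g_x(y)]^{(\alpha-n)/(2-n)}$ is symmetric in $x$ and $y$, since the Green's function of the (self-adjoint) conformal Laplacian satisfies $G^g_x(y) = G^g_y(x)$, so that $I_{M^n, g, \alpha}$ is formally self-adjoint and the duality identity $(I_{M^n,g,\alpha} f, h) = (f, I_{M^n,g,\alpha} h)$ is available; second, because $G^g_x(y)$ is comparable to $|x-y|_g^{2-n}$ near the diagonal, the kernel is comparable to $|x-y|_g^{\alpha-n}$, so the boundedness $I_{M^n,g,\alpha}\colon L^p \to L^q$, the compactness of $I_{M^n,g,\alpha+1}\colon L^p \to L^r$ for $r<q$, and the $\epsilon$-level inequality \eqref{epsilon ineq-1} all hold as already recorded.

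Concretely, I would fix $q$ by \eqref{power}, take a maximizing sequence $\{f_i\}\subset L^p(M^n)$ with $f_i \ge 0$ and $\|I_{M^n,g,\alpha} f_i\|_{L^q(M^n)} = 1$, and use H\"older's inequality to get a uniform bound $\|f_i\|_{L^p(M^n)} \le \xi_{\alpha,p,G}^{-1} + 1$ for large $i$. Passing to a subsequence, $f_i \rightharpoonup f_*$ weakly in $L^p(M^n)$; testing against $h \in L^{q'}(M^n)$ and using self-adjointness together with the mapping properties of $I_{M^n,g,\alpha}$ and $I_{M^n,g,\alpha+1}$ gives $I_{M^n,g,\alpha} f_i \rightharpoonup I_{M^n,g,\alpha} f_*$ weakly in $L^q(M^n)$, while the compactness in Remark \ref{rem2.2} upgrades the $\alpha+1$ statement to $I_{M^n,g,\alpha+1} f_i \to I_{M^n,g,\alpha+1} f_*$ strongly in $L^q(M^n)$.

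Then I would apply the Brezis-Lieb lemma to $\{f_i\}$ in $L^p(M^n)$ and to $\{I_{M^n,g,\alpha} f_i\}$ in $L^q(M^n)$, substitute the two splittings into the $\epsilon$-level inequality \eqref{epsilon ineq-1} applied to $f_i - f_*$, and observe that the error term $C(\epsilon)\|I_{M^n,g,\alpha+1}(f_i-f_*)\|_{L^q(M^n)}^p$ tends to $0$ by the strong convergence. Choosing $\epsilon$ small enough that $N_{\alpha,p} + \epsilon < \xi_{\alpha,p,G}$ --- this is precisely where the hypothesis $\xi_{\alpha,p,G} > N_{\alpha,p}$ enters --- the resulting chain of inequalities forces $\|I_{M^n,g,\alpha}(f_i - f_*)\|_{L^q(M^n)} \to 0$, hence $\|I_{M^n,g,\alpha} f_*\|_{L^q(M^n)} = 1$ and in particular $f_* \ne 0$. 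Finally, weak lower semicontinuity of $\|\cdot\|_{L^p(M^n)}$ gives $\|f_*\|_{L^p(M^n)} \le \liminf_i \|f_i\|_{L^p(M^n)}$, so $(I_{M^n,g,\alpha} f_*, f_*)/\|f_*\|_{L^p(M^n)}^2 \ge \lim_i (I_{M^n,g,\alpha} f_i, f_i)/\|f_i\|_{L^p(M^n)}^2 = \xi_{\alpha,p,G}$, which exhibits $f_*$ as a maximizer.

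The only point needing genuine care --- though it is routine --- is the first structural input above: one must record the symmetry $G^g_x(y) = G^g_y(x)$ and check that the dual-space bookkeeping of Proposition \ref{prop-criterior} survives the replacement of the distance kernel by the Green's-function kernel. Once that is in place, the argument is a line-by-line copy of the earlier proof and no new analytic obstacle appears; the strict-inequality hypothesis does all the real work, exactly as in Proposition \ref{prop-criterior}.
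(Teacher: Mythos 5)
Your proposal is correct and matches the paper's intent exactly: the paper gives no separate proof of Corollary \ref{cor_exist}, merely remarking that it follows ``similarly, based on Corollary \ref{cor_e}'' from the argument of Proposition \ref{prop-criterior}, which is precisely the line-by-line transcription you carry out. Your explicit remarks on the symmetry $G^g_x(y)=G^g_y(x)$ and the near-diagonal comparability of the kernel to $|x-y|_g^{\alpha-n}$ are the (unstated) ingredients that justify the paper's ``similarly,'' so no gap remains.
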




%
%

\subsubsection{Genaralized Yamabe problem}

%

We shall prove Theorem \ref{yamabe} for $\alpha<n$  in this subsection. Due to Corollary \ref{cor_exist}, we only need to prove
\begin{prop}
If $(M^n, g)$ is locally conformally flat, but  not conformally equivalent to the standard sphere $(\mathbb{S}^n ,g_0)$, then for $\alpha<n$, $Y_\alpha (M^n, g)>Y_\alpha (\mathbb{S}^n, g_0).$
\label{Yamabe}
\end{prop}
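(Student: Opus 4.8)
The plan is to construct, for each small $\epsilon>0$, a nonnegative $u_\epsilon\in C^0(M^n)$ with $J_{g,\alpha}(u_\epsilon)>Y_\alpha(\mathbb S^n,g_0)$; since $Y_\alpha(M^n,g)=\sup J_{g,\alpha}$ by \eqref{min}, this gives the conclusion. The structural fact that makes everything go is that $J_{g,\alpha}$, hence $Y_\alpha(M^n,g)$, depends only on the conformal class of $g$: this is the conformal covariance of $I_{M^n,g,\alpha}$ established in \cite{Zhu2014}, and may also be checked directly from the transformation law $G^{\bar g}_x(y)=(\varphi(x)\varphi(y))^{-(n-2)/(n-\alpha)}G^g_x(y)$ for $\bar g=\varphi^{4/(n-\alpha)}g$, which yields $J_{\bar g,\alpha}(\bar u)=J_{g,\alpha}\big(\varphi^{(n+\alpha)/(n-\alpha)}\bar u\big)$. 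As $(M^n,g)$ is locally conformally flat, I would therefore replace $g$ by a conformal metric --- still denoted $g$ --- which is \emph{flat} on a geodesic ball $B_{2\delta}(P)$ about a fixed point $P$. On that ball the conformal Laplacian is $-\Delta$, so for $x,y\in B_{2\delta}(P)$ one has $G^g_x(y)=|x-y|^{2-n}+H_x(y)$ with $y\mapsto H_x(y)$ harmonic, jointly smooth in $(x,y)$ and bounded on $\overline{B_\delta(P)}\times\overline{B_\delta(P)}$, and $H_P(P)=A$, the constant in $G^g_P(y)=|y|^{2-n}+A+O(|y|)$.

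Next I would take $u_\epsilon=\chi f_\epsilon$, where $f_\epsilon(x)=\big(\epsilon/(\epsilon^2+|x|^2)\big)^{(n+\alpha)/2}$ is the extremal bubble written in normal coordinates at $P$ and $\chi$ is a fixed smooth cutoff, $\chi\equiv1$ on $B_\delta(P)$, $\operatorname{supp}\chi\subset B_{2\delta}(P)$. Since $f_\epsilon=O(\epsilon^{(n+\alpha)/2})$ on $\{|x|\ge\delta\}$, the cutoff annulus contributes only $O(\epsilon^{n})$ to both numerator and denominator of $J_{g,\alpha}(u_\epsilon)$, so the computation reduces to integrals over $B_\delta(P)$, where $dV_g=dx$; by scaling, $\|u_\epsilon\|_{L^{2n/(n+\alpha)}(M^n)}^2=\big(\int_{\mathbb R^n}f_1^{2n/(n+\alpha)}\big)^{(n+\alpha)/n}\big(1+O(\epsilon^{n})\big)$. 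For the numerator, on $B_\delta(P)\times B_\delta(P)$ and for $\delta$ small (so $|x-y|\le2\delta$ makes $H_x(y)|x-y|^{n-2}$ small), I expand, using $\frac{\alpha-n}{2-n}=\frac{n-\alpha}{n-2}>0$,
\begin{align*}
\big[G^g_x(y)\big]^{\frac{n-\alpha}{n-2}}
&=|x-y|^{\alpha-n}\big(1+H_x(y)|x-y|^{n-2}\big)^{\frac{n-\alpha}{n-2}}\\
&=|x-y|^{\alpha-n}+\frac{n-\alpha}{n-2}\,H_x(y)\,|x-y|^{\alpha-2}+O\big(|x-y|^{\alpha+n-4}\big).
\end{align*}
Integrated against $f_\epsilon(x)f_\epsilon(y)$, the first term gives $Y_\alpha(\mathbb S^n,g_0)\,\|f_\epsilon\|_{L^{2n/(n+\alpha)}(\mathbb R^n)}^2-O(\epsilon^{n})$, exactly by the scaling estimates in the proof of Proposition \ref{prop-compare-constant} together with $Y_\alpha(\mathbb S^n,g_0)=N_{\alpha,2n/(n+\alpha)}$ from \eqref{8-5-1}. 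The second, decisive term: the rescaled integral concentrates at scale $|x|,|y|\lesssim\epsilon$, on which $H_x(y)=A+o(1)$, so it equals $\frac{n-\alpha}{n-2}\,A\,\epsilon^{n-2}\,L\,(1+o(1))$, where $L:=\iint_{\mathbb R^n\times\mathbb R^n}f_1(x)f_1(y)|x-y|^{\alpha-2}\,dx\,dy$ is a \emph{finite positive} constant (integrability of the singularity needs only $\alpha-2>-n$, and the inner convolution decays like $|x|^{\alpha-2}$ at infinity). A scaling estimate shows the remainder is $o(\epsilon^{n-2})$ in every dimension $n\ge3$ (the borderline cases $n=3,4$ requiring a little extra care). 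Collecting, $J_{g,\alpha}(u_\epsilon)=Y_\alpha(\mathbb S^n,g_0)+c\,A\,\epsilon^{n-2}(1+o(1))$ with $c>0$.

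It remains to invoke $A>0$. Since $(M^n,g)$ is locally conformally flat with positive scalar curvature, the conformal blow-up $\big(M^n\setminus\{P\},\,(G^g_P)^{4/(n-2)}g\big)$ is a scalar-flat, asymptotically flat manifold whose ADM mass is a positive multiple of $A$, so by the positive mass theorem (Schoen--Yau, in the locally conformally flat setting) $A\ge0$, with equality only if this blow-up is isometric to Euclidean space, i.e.\ only if $(M^n,g)$ is conformally equivalent to $(\mathbb S^n,g_0)$ --- which is excluded by hypothesis. Hence $A>0$, so $J_{g,\alpha}(u_\epsilon)>Y_\alpha(\mathbb S^n,g_0)$ once $\epsilon$ is small, and therefore $Y_\alpha(M^n,g)>Y_\alpha(\mathbb S^n,g_0)$.

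The main obstacle, as in the classical Yamabe comparison, is the error bookkeeping in the expansion of $\big[G^g_x(y)\big]^{(n-\alpha)/(n-2)}$: one must verify that, after integrating against the concentrating pair $f_\epsilon(x)f_\epsilon(y)$, every contribution beyond the Euclidean leading term and the mass term of order $\epsilon^{n-2}$ is genuinely $o(\epsilon^{n-2})$ uniformly in $n$ --- the remainder $O(|x-y|^{\alpha+n-4})$ and the cutoff annulus are the places to watch, and the low dimensions $n=3,4$ are where the margin is thinnest --- and that replacing $H_x(y)$ by its pole value $A$ costs only a factor $1+o(1)$. A secondary point is to make the reduction to a flat conformal representative rigorous and to identify $A$ (which is not conformally invariant, only its sign is) with a multiple of the ADM mass, so that the positive mass theorem applies exactly as quoted.
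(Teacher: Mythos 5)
Your proof is correct and follows essentially the same route as the paper: a bubble $f_\lambda$ truncated near a point $P$, the expansion $G^g_x(y)=|x-y|^{2-n}+A+\dots$ in the flat (conformal normal) coordinates of the locally conformally flat metric, extraction of the positive correction term of order $\lambda^{n-2}$ proportional to $A$ dominating the $O((\delta/\lambda)^{-n})$ truncation error, and the positive mass theorem to get $A>0$. Your version is in fact more careful than the paper's about the remainder bookkeeping (the $O(|x-y|^{\alpha+n-4})$ term and the replacement of $H_x(y)$ by $A$), but there is no structural difference.
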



%
%
%
%
%
%
%

From now on in this subsection, we will assume that  $(M^n, g)$ is a locally conformally flat manifold. We need the follow expansion for  Green's function of conformal Laplacian operator near its singular point (Lemma 6.4 in \cite{LP1987}, here we use the same notations).
\begin{lem} Let $(M^n, g)$ be a locally conformally flat manifold ($n\ne 2$).
In conformal normal coordinates $\{x^i\}$ at $x$, $G_x^g(y)$  has an asymptotic expansion
\begin{equation}\label{11-17-1}
G_x^g(y)=r^{2-n}+A+O''(r),  \ \ \   \  \forall   \  y \in B_{\delta_0}(x)
\end{equation}
where $A \ge 0$ is a constant.\label {expansion of Green function}
\end{lem}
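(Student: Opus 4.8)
This is Lemma~6.4 of \cite{LP1987}; the plan is to reduce the expansion, in the locally conformally flat case, to an elementary computation with the Euclidean fundamental solution, and then to read off the sign of $A$ from the positive mass theorem.

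First I would exploit the locally conformally flat hypothesis to simplify the representative of the conformal class. Following \cite{LP1987}, work in conformal normal coordinates $\{x^i\}$ at $x$ with associated metric $g$, so that $\det g_{ij}\equiv 1$ and $r=|x-y|$ equals the geodesic distance near $x$; when $(M^n,g)$ is locally conformally flat this metric may be chosen flat on a ball $B_{\delta_0}(x)$, so that $R_g\equiv 0$ there and the conformal Laplacian reduces to $L_g=-\Delta_g+\tfrac{n-2}{4(n-1)}R_g=-\Delta$, the flat Laplacian, on $B_{\delta_0}(x)$. Since the sign of the bottom eigenvalue of $L_g$ is a conformal invariant, the standing positivity assumption on the scalar curvature gives $\lambda_1(L_g)>0$, and hence $L_g$ is invertible on $M^n$.

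Next I would split off the singularity. With the normalization $G_x^g=n(n-2)\omega_n\Gamma_x^g$ one has $L_gG_x^g=(n-2)\omega_{n-1}\,\delta_x$, where $\omega_{n-1}=n\omega_n$ is the area of $\mathbb S^{n-1}$. Choose $\eta\in C_c^\infty(B_{\delta_0}(x))$ with $\eta\equiv1$ near $x$ and put $u:=G_x^g-\eta\,r^{2-n}$. From the classical identity $-\Delta(r^{2-n})=(n-2)\omega_{n-1}\,\delta_x$ together with $L_g=-\Delta$ near $x$, the distribution $F:=L_g(\eta\,r^{2-n})-(n-2)\omega_{n-1}\,\delta_x$ vanishes near $x$ (where $\eta\equiv1$) and outside $B_{\delta_0}(x)$, and equals a smooth function on the intermediate annulus; hence $F\in C_c^\infty(M^n)$. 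Then $L_gu=L_gG_x^g-L_g(\eta\,r^{2-n})=-F$, so by invertibility of $L_g$ and elliptic regularity $u=-L_g^{-1}F\in C^\infty(M^n)$. On the ball where $\eta\equiv1$ this reads $G_x^g(y)=r^{2-n}+u(y)$ with $u$ smooth --- in fact $u$ is Euclidean harmonic there, since $F$ vanishes near $x$. Putting $A:=u(x)$, the Taylor expansion of the smooth (harmonic) function $u$ gives $u(y)-A=O''(r)$, i.e. $u(y)-A=O(r)$ with $\nabla u=O(1)$ and $\nabla^2u=O(r^{-1})$; therefore $G_x^g(y)=r^{2-n}+A+O''(r)$ on $B_{\delta_0}(x)$.

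Finally, for the sign of $A$ I would invoke the positive mass theorem, which is the one genuinely nontrivial ingredient. The metric $(G_x^g)^{4/(n-2)}g$ on $M^n\setminus\{x\}$ is scalar flat, and under the inversion $y\mapsto y/|y|^2$ it becomes an asymptotically flat manifold whose ADM mass is a positive multiple of the constant term $A$ in the expansion; by the positive mass theorem of Schoen--Yau (available in all dimensions in the locally conformally flat case) this mass is $\ge 0$, whence $A\ge 0$, with equality if and only if $(M^n,g)$ is conformally equivalent to $(\mathbb S^n,g_0)$ --- a rigidity statement we shall use later. The main obstacle is thus not the expansion, which the conformal flatness renders elementary, but the positivity of the mass, and that is quoted rather than reproved here.
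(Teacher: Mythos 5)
Your sketch is correct and is essentially the argument of the cited source: the paper itself offers no proof of this lemma, merely quoting Lemma~6.4 of \cite{LP1987}, and your reduction (flat conformal representative in conformal normal coordinates, splitting off $\eta\,r^{2-n}$, invertibility of $L_g$ from positive scalar curvature, smoothness and harmonicity of the remainder, positive mass theorem for $A\ge 0$) is exactly the standard Lee--Parker proof. The only point worth flagging is that the sign statement $A\ge 0$ genuinely requires the positive mass theorem and is not part of Lee--Parker's Lemma~6.4 itself, which you correctly isolate as the one nontrivial ingredient.
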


\smallskip

\noindent{\bf Proof of Proposition \ref{Yamabe}}.
Let  $P\in M^n$ be a fixed point. In a conformal normal coordinate around $P$, $G_x^g(y)$ satisfies \eqref{11-17-1}. Further, since the manifold is not conformally equivalent to the standard sphere, $A>0$ by the positive mass theorem. For simplicity, we denote $B_{\delta}(P)$ as $B_{\delta}$.


For small enough $\delta>0$,  choose a  test function
\begin{equation}\label{local flat 1}
    u=\left\{\begin{array}{ll}
    f_\lambda(x), & B_{\delta},\\
    0, & M^n\backslash B_{\delta}.
    \end{array}\right.
\end{equation}
Then by a similar argument to  { Proposition \ref{prop-compare-constant}}, we can obtain
\begin{equation}\label{local flat 2}
    \begin{split}
    Y_\alpha(M^n,g)\geq &J_{g,\alpha}(u)\\
    \geq & Y_\alpha(\mathbb{S}^n,g_0)-C\left(\frac\delta\lambda\right)^{-n}\\ &+A\cdot \frac{\int_{B_\delta\times B_\delta}|x-y|_g^{\alpha-2}f_\lambda(x)f_\lambda(y)dxdy} {\|u\|_{L^{\frac{2n}{n+\alpha}}(M^n)}^2}.
    \end{split}
\end{equation}
Since
\begin{equation}\label{local flat 3}
    \begin{split}
    &\int_{B_\delta\times B_\delta}|x-y|_g^{\alpha-2}f_\lambda(x)f_\lambda(y)dxdy\\ =&\lambda^{-(n+\alpha)}\int_{B_\delta\times B_\delta}|x-y|_g^{\alpha-2} \left(1+\frac{|x|^2}{\lambda^2}\right)^{-\frac{n+\alpha}2} \left(1+\frac{|y|^2}{\lambda^2}\right)^{-\frac{n+\alpha}2}dxdy\\
    =&\lambda^{n-2}\int_{B_{\delta/\lambda}\times B_{\delta/\lambda}}|u-v|^{\alpha-2} (1+|u|^2)^{-\frac{n+\alpha}2} (1+|v|^2)^{-\frac{n+\alpha}2} dudv\\
    \geq&C_1\lambda^{n-2},
    \end{split}
\end{equation}
thus
\begin{equation}\label{local flat 4}
    \begin{split}
    &-C\left(\frac\delta\lambda\right)^{-n}+A \cdot \frac{\int_{B_\delta\times B_\delta}|x-y|_g^{\alpha-2}f_\lambda(x)f_\lambda(y)dxdy} {\|u\|_{L^{\frac{2n}{n+\alpha}}(M^n)}^2}\\
    \geq&-C\left(\frac\delta\lambda\right)^{-n}+C_2A\lambda^{n-2} =\lambda^{n-2}(C_2A-C\lambda^2\delta^{-n})>0
    \end{split}
\end{equation}
by choosing $\lambda$ much smaller than $\delta$. We hereby obtain
\begin{equation}\label{local flat 5}
    Y_\alpha(M^n,g)>Y_\alpha(\mathbb{S}^n,g_0).
\end{equation}

\section{Case of $\alpha>n$}\label{case_ge}

We first establish the reversed HLS inequality on general compact Riemannian manifolds.

\subsection{Reversed  HLS inequality on Manifolds}



We need the follwing two lemmas.

\begin{lem}[Conversed Young's Inequality]\label{lem2-1-1}
There is a constant $C>0$, such that
$$||g*h||_{L^r} \ge C||g||_{L^q} \cdot ||h||_{L^p}$$
where $p\in (0,1), q, r<0$ and satisfying
$$1+\frac 1r =\frac 1q+\frac 1p.$$
\end{lem}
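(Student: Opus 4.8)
The plan is to follow the proof of the classical reversed Young convolution inequality on $\mathbb{R}^n$, parallel to the forward case (cf.\ \cite{LL2001} and Lemma~\ref{lem2-1}); the extra ingredient is a comparison, on $M^n$, of distance-kernel integrals such as $\int_{M^n}h(|x-y|_g)^{p}\,dV_y$ with their Euclidean analogues, since on a manifold these depend on the base point. We may assume $g,h\ge 0$ and, discarding the trivial cases, that $\|g\|_{L^q}$ and $\|h\|_{L^p}$ are finite and positive, so that $g,h>0$ a.e.; put $k(x,y)=h(|x-y|_g)$.

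The first step is a pointwise lower bound for $(g*h)(x)$. For a.e.\ $x$, factor
\[
g(y)\,k(x,y)=\big(g(y)^{q}k(x,y)^{p}\big)^{1/r}\cdot g(y)^{\,1-q/r}\cdot k(x,y)^{\,1-p/r},
\]
and apply the reversed H\"older inequality in the variable $y$ with the three exponents $r$, $s_1:=(1/q-1/r)^{-1}$ and $s_2:=(1/p-1/r)^{-1}$. The hypothesis $1+1/r=1/q+1/p$ gives $1/r+1/s_1+1/s_2=1$, while from $p\in(0,1)$ and $q,r<0$ one computes $1/s_1=1-1/p<0$ and $1/s_2=1-1/q>1$; hence exactly one of $r,s_1,s_2$ lies in $(0,1)$ and the remaining two are negative, which is precisely the configuration in which reversed H\"older is valid (the three-function version following by iterating the two-function one). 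This yields, for a.e.\ $x$,
\[
(g*h)(x)\ \ge\ \|g\|_{L^q}^{\,1-q/r}\ \Big(\int_{M^n}k(x,y)^{p}\,dV_y\Big)^{1/p-1/r}\ \Big(\int_{M^n}g(y)^{q}k(x,y)^{p}\,dV_y\Big)^{1/r}.
\]

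Next one takes $L^r$-norms in $x$: since $r<0$, this reverses the inequality twice and therefore preserves it. Fubini on $M^n\times M^n$ turns the last double integral into $\int_{M^n}g(y)^{q}\big(\int_{M^n}k(x,y)^{p}\,dV_x\big)dV_y$, and collecting the powers of $\|g\|_{L^q}$ --- using $1/s_1+1/r=1/q$ and $1/s_2+1/r=1/p$ --- makes its total exponent equal to $1$; one is left with a single power of a distance-kernel integral of $h$, which on $\mathbb{R}^n$ would just be $\|h\|_{L^p}^{p}$ (so there the inequality comes out with constant $1$). The only manifold-specific point --- and the main obstacle --- is therefore to bound $\int_{M^n}h(|x-y|_g)^{p}\,dV_y$ and $\int_{M^n}h(|x-y|_g)^{p}\,dV_x$ by fixed multiples of $\|h\|_{L^p}^{p}$ uniformly in the base point. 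This I would obtain by covering the compact $M^n$ with finitely many normal-coordinate charts in which the Riemannian distance and volume element are comparable, from both sides, to the Euclidean ones with constants depending only on $(M^n,g)$. The subtle part is bookkeeping of directions: these kernel integrals carry negative (or $(0,1)$) exponents, so each comparison has to be applied on whichever side does not flip the desired inequality; once that is sorted out the powers of $h$ combine into a single $\|h\|_{L^p}$ and the product of all the comparison constants gives the admissible $C=C(p,q,r,M^n,g)$.

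An alternative that avoids the triple factorization is to bound $\|g*h\|_{L^r}$ from below by reversed-H\"older duality against an arbitrary $\phi>0$, rewrite $\int_{M^n}\phi\,(g*h)=\int_{M^n}g(y)\big(\int_{M^n}\phi(x)h(|x-y|_g)\,dV_x\big)dV_y$, apply reversed H\"older once more, and reduce to a reversed Young inequality with all three exponents in $(0,1)$; the same normal-coordinate comparison of the distance kernel is still needed, so this is not a genuine shortcut past the main difficulty.
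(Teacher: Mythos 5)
Your proposal is correct and follows exactly the route the paper intends: the paper gives no proof of this lemma, saying only that it is proved ``in a similar way to that for Conversed Young's Inequality in $\mathbb{R}^n$'' (cf.\ \cite{LL2001}, \cite{DZ2}), and your sketch is precisely that adaptation --- the three-factor reversed H\"older splitting with exponents $r,s_1,s_2$ (one in $(0,1)$, two negative), Fubini, and a uniform two-sided comparison of $\int_{M^n}h(|x-y|_g)^p\,dV$ with $\|h\|_{L^p}^p$ on the compact manifold. The only point worth flagging is one the paper itself leaves vague rather than a gap in your argument: the lemma never specifies how $\|h\|_{L^p}$ is to be read for a function of the distance, and the uniform-in-$x$ \emph{lower} bound on the kernel integral (needed because its exponent $1/p-1/r$ is positive) is clear for the kernels actually used but would require a word of justification for radii near the diameter of $M^n$.
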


This can be proved in a simiar way to that for   Conversed Young's Inequality in $\mathbb{R}^n$. We skip details here.

For a given measurable function $f(x)$ on $M^n$ and $0<p<+\infty$, the weak $L^p$ norm of $f(x)$ is defined by
    $$\|f\|_{L_W^p}=\inf\{A>0:\ meas\{|f(x)|>t\}\cdot t^p\leq A^p\},$$
For $p<0$, the norm is defined as
    $$\|f\|_{L_W^p}=\sup\{A>0:\ meas\{|f(x)|<t\}\cdot t^p\leq A^p\}.$$
Thus, for $p<0$,
    $$\|f\|_{L_W^p}^p=\inf\{B>0:\ meas\{|f(x)|<t\}\cdot t^p\leq B\}.$$

Let $T:L^p(M^n)\rightarrow L^q(M^n)$ be a linear operator. We recall that for $0<p,q<+\infty$, operator $T$ is called the weak type $(p,q)$ if there exists a constant $C(p,q)>0$ such that for all $f\in L^p(M^n)$
    $$meas\{x: |Tf(x)|>\tau\}\leq\left(C(p,q)\frac{\|f\|_{L^p}}\tau \right)^q, \quad \forall\tau>0.$$
For $q<0<p<1$, we say operator $T$ is of the weak type $(p,q)$, if there exists a constant $C(p,q)>0$, such that for all $f\in L^p(M^n)$,
    $$meas\{x: |Tf(x)|<\tau\}\leq\left(C(p,q)\frac{\|f\|_{L^p}}\tau \right)^q, \quad \forall\tau>0.$$

\begin{lem}[Marcinkiewicz type interpolation theorem]\label{lem Marcinkiewicz}
Let $T$ be a linear operator which maps any nonnegative function to a nonnegative function. For a pair of numbers $(p_1,q_1),\ (p_2,q_2)$ satisfying $q_i<0<p_i<1,\ i=1,2,\ p_1<p_2$ and $q_1<q_2$, if $T$ is weak type $(p_1,q_1)$ and $(p_2,q_2)$ for all nonnegative functions, then for any $\theta\in(0,1)$, and
    $$\frac 1p=\frac{1-\theta}{p_1}+\frac\theta{p_2},\quad \frac 1q=\frac{1-\theta}{q_1}+\frac\theta{q_2},$$
$T$ is reversed strong type $(p,q)$ for all nonnegative functions, that is,
    $$\|Tf\|_{L^q}\geq C\|f\|_{L^p},\quad \forall f\in L^p\quad\text{and}\quad f\geq 0,$$
for some constant $C=C(p_1,p_2,q_1,q_2)>0$.
\end{lem}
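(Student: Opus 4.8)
The plan is to mimic the classical Marcinkiewicz interpolation argument, but with every monotonicity reversed because the target exponent $q$ is negative. By the homogeneity of $T$ (it is linear) and of the Lebesgue functionals, it suffices to prove $\|Tf\|_{L^q}\ge C$ for every nonnegative $f$ with $\|f\|_{L^p}=1$; and since $q<0$ this is equivalent to the upper bound $\int_{M^n}(Tf)^q\,dV\le C^q$. So the whole problem reduces to bounding $\int_{M^n}(Tf)^q\,dV$ from above by a constant depending only on $p_1,p_2,q_1,q_2$ and $\mathrm{vol}(M^n)$.

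First I would record the layer-cake identity for a negative exponent: for a nonnegative measurable $h$ on $M^n$,
\[
\int_{M^n} h^q\,dV=|q|\int_0^\infty \mathrm{meas}\{x:h(x)<t\}\,t^{q-1}\,dt ,
\]
obtained from the usual distribution-function formula by the substitution $t=s^{1/q}$ (note that $x\mapsto x^q$ is decreasing, so $\{h^q>s\}=\{h<s^{1/q}\}$). Applying this with $h=Tf$, the task becomes to control $D(t):=\mathrm{meas}\{Tf<t\}$ for all $t>0$. For $t$ bounded away from $0$ the trivial bound $D(t)\le\mathrm{vol}(M^n)$, together with $\int_1^\infty t^{q-1}\,dt<\infty$ (here $q-1<-1$), already suffices; the real content is the decay of $D(t)$ as $t\to 0^+$.

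To estimate $D(t)$ for small $t$ I would split $f=f^{\mathrm{low}}_t+f^{\mathrm{high}}_t$ into nonnegative pieces by truncating at a level $c(t)$, and use positivity: $Tf=Tf^{\mathrm{low}}_t+Tf^{\mathrm{high}}_t\ge Tf^{\mathrm{low}}_t$ and $\ge Tf^{\mathrm{high}}_t$, whence $D(t)\le\mathrm{meas}\{Tf^{\mathrm{low}}_t<t\}$ and $D(t)\le\mathrm{meas}\{Tf^{\mathrm{high}}_t<t\}$. Choosing $c(t)$ as a suitable power of $t$ and feeding the bounded (resp. the high) piece into the weak-type $(p_2,q_2)$ (resp. $(p_1,q_1)$) hypothesis — together with the elementary bounds $\int (f^{\mathrm{low}}_t)^{p_2}\le c(t)^{p_2-p}\|f\|_{L^p}^p$ and $\|f\|_{L^{p_1}}\le \mathrm{vol}(M^n)^{1/p_1-1/p}\|f\|_{L^p}$, valid since $p_1<p<p_2$ on a finite-measure space — yields a bound of the form $D(t)\le C\,t^{a}$ with a positive exponent $a$. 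The role of the interpolation relation $\frac1q=\frac{1-\theta}{q_1}+\frac{\theta}{q_2}$ with $\theta\in(0,1)$, equivalently $q_1<q<q_2<0$, is precisely that $a$ can be arranged to satisfy $a>-q=|q|$, so that $\int_0^1 D(t)\,t^{q-1}\,dt\le C\int_0^1 t^{a+q-1}\,dt<\infty$. Combining the two ranges of $t$ gives $\int_{M^n}(Tf)^q\,dV\le C$, hence the claim; the same estimate also shows $\mathrm{meas}\{Tf=0\}=0$, so $\|Tf\|_{L^q}$ is a genuine positive number.

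The step I expect to be the main obstacle is the bookkeeping in the small-$t$ estimate: one must pick the truncation exponent so that, after substitution into the reversed weak-type inequalities — where a small $L^{p_i}$-norm of a piece makes the weak-type bound \emph{larger}, not smaller, because $q_i<0$ — the resulting power $a$ of $t$ strictly exceeds $|q|$, and one must check that the constraints $p_1<p<p_2$, $q_1<q<q_2$ leave enough room for such a choice. It is here that the hypothesis $\theta\in(0,1)$ is genuinely needed; everything else (the layer-cake rewriting, the positivity-based set inclusions, the large-$t$ bound) is routine once the monotonicity reversals are tracked carefully.
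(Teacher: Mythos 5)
Your skeleton --- normalizing $\|f\|_{L^p}=1$, rewriting $\int_{M^n}(Tf)^q\,dV=|q|\int_0^\infty \mathrm{meas}\{Tf<t\}\,t^{q-1}\,dt$, disposing of $t\ge 1$ by $\mathrm{meas}\{Tf<t\}\le \mathrm{vol}(M^n)$, and using positivity of $T$ to get $\mathrm{meas}\{Tf<t\}\le\min\bigl(\mathrm{meas}\{Tf^{\mathrm{low}}_t<t\},\mathrm{meas}\{Tf^{\mathrm{high}}_t<t\}\bigr)$ instead of a sum --- is the right frame and is what the argument the paper defers to (Dou--Zhu [DZ2]; the paper gives no proof of its own) also does. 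But the small-$t$ step, which you rightly call the crux, is set up in a way that cannot close. Since $q_i<0$, the weak-type bound $\bigl(C\|g\|_{L^{p_i}}/t\bigr)^{q_i}$ is a \emph{decreasing} function of $\|g\|_{L^{p_i}}$, so the estimates you propose to substitute --- the \emph{upper} bounds $\int (f^{\mathrm{low}}_t)^{p_2}\le c(t)^{p_2-p}$ and $\|f\|_{L^{p_1}}\le \mathrm{vol}(M^n)^{1/p_1-1/p}\|f\|_{L^p}$ --- only make the weak-type bound \emph{larger} and yield nothing. One needs \emph{lower} bounds on the norms of the pieces, and these force the opposite assignment from the classical one: the high part $f\chi_{\{f>s\}}$ must go with $(p_2,q_2)$, because $\|f\chi_{\{f>s\}}\|_{L^{p_2}}^{p_2}\ge s^{p_2-p}\int_{\{f>s\}}f^p$ is bounded below when that piece carries mass; the low part goes with $(p_1,q_1)$ via $\|f\chi_{\{f\le s\}}\|_{L^{p_1}}^{p_1}\ge s^{p_1-p}\int_{\{f\le s\}}f^p$; the level $s(t)$ must tend to $\infty$ as $t\to 0$; and one needs a dichotomy guaranteeing that at least one piece retains a fixed fraction of $\int f^p$.

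There is a second, more structural gap: the reduction to a pointwise bound $\mathrm{meas}\{Tf<t\}\le Ct^a$ with $a>|q|$ uniformly over normalized $f$ is unattainable. The conclusion of the lemma (reversed strong type $(p,q)$) only implies the weak-type decay $\mathrm{meas}\{Tf<t\}\le C t^{|q|}$, and concentrating families such as the bubbles $f_\lambda$ of \eqref{buble} actually saturate the exponent $|q|$ at their natural scale, so no decomposition can produce a uniform gain $a>|q|$. Indeed, if you run the corrected case analysis above with $s(t)=t^{-\gamma}$, the two cases give exponents $E_1(\gamma)$ (increasing from $|q_2|$) and $E_2(\gamma)$ (decreasing from $|q_1|$), and the interpolation relation makes $\max_\gamma\min(E_1,E_2)$ equal to $|q|$ \emph{exactly}; with only $\mathrm{meas}\{Tf<t\}\lesssim t^{|q|}$ the integral $\int_0^1 \mathrm{meas}\{Tf<t\}\,t^{q-1}\,dt$ diverges logarithmically. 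As in the classical Marcinkiewicz theorem, the upgrade from weak to strong type does not come from improved pointwise decay of the distribution function; it comes from keeping the $t$-dependent truncation inside the $t$-integral and controlling the resulting double integral by a Fubini/Hardy-type argument, and that is where $\theta\in(0,1)$ (i.e.\ $p_1<p<p_2$, $q_1<q<q_2$) is genuinely used.
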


The proof of Lemma \ref{lem Marcinkiewicz} is almost identical to that for the same inequality in $\mathbb{R}^n$, see  Dou and Zhu \cite{DZ2}.

\smallskip

{\bf Proof of Theorem \ref{prop-rough-ineq_R}.} The proof is quite standard. We shall follow the proof of reversed Hardy-Littlewood-Sobolev inequality, given in Dou and Zhu \cite{DZ2} (proof of Proposition 2.3 there). To prove \eqref{1-1-1}, we only need to show that there is a constant $C>0$, such that for any $\lambda>0$,
\begin{equation}\label{2-1-1}
m\{x\in M^n \ : \ |I_{\alpha} f|<\lambda\} \le C\big( \frac{||f||_{L^p}}{\lambda}\big)^q.
\end{equation}
Inequality \eqref{1-1-1} follows from the above inequality via the above Marcinkiewicz type interpolation theorem (Lemma \ref{lem Marcinkiewicz}).

For any $\gamma>0$, define
 \[I_{\alpha}^1 f(x)=\int_{|y-x|_g\le \gamma}\frac{f(y)}{|x-y|^{n-\alpha}}dy,
  \]
  and
   \[I_{\alpha}^2 f(x)=\int_{|y-x|_g> \gamma}\frac{f(y)}{|x-y|^{n-\alpha}}dy.
  \]
Thus, for any $\tau>0$,
\begin{equation}\label{HD-2-1}
    m\{x: I_{\alpha} f(x)<2\tau\}\le m\{x: I_{\alpha}^1 f (x)<\tau\}+m\{x: I_{\alpha}^2 f(x)<\tau\}.
\end{equation}
 We note that  it suffices to prove  inequality \eqref{2-1-1} with $2\tau$ in place of $\tau$ in the left side of the
inequality, and we can further assume $\|f\|_{L^p}=1.$
From  Conversed Young's inequality (Lemma \ref{lem2-1-1}), we have
    $$||I_{\alpha}^1 f||_{L^{r_1}} \ge C \left(\int_{M^n}\left(\frac{\chi_\gamma(y)}{|y|^{n-\alpha}} \right)^{t_1}dy\right)^{\frac 1{t_1}} ||f||_{L^p} =:D_1,$$
where $\frac 1p+\frac 1{t_1}=1+\frac 1{r_1}$ with $t_1\in(\frac n{n-\alpha},0),\ r_1<0$, $\chi_\gamma(x)=1$ for $|x|_g\leq\gamma$ and $\chi_\gamma(x)=0$ for $|x|_g>\gamma$, and
    $$D_1=\left(\int_{M^n}\left(\frac{\chi_\gamma(y)} {|y|^{n-\alpha}} \right)^{t_1}dy\right)^{\frac 1{t_1}}=C_1(n,\alpha)\gamma^{\frac{n-(n-\alpha)t_1}{t_1}}.$$
Thus
\begin{equation}\label{2-5}
    m\{x: I_{\alpha}^1 f (x)<\tau\} \le \frac{ ||I_{\alpha}^1 f||^{r_1}_{L^{r_1}}}{\tau^{r_1}} \le \frac{C_2(n,\alpha)\gamma^{\frac{r_1[n-(n-\alpha)t_1]}{t_1}}} {\tau^{r_1}}.
\end{equation}
On the other hand, Conversed Young's inequality implies
    $$||I_{\alpha}^2 f||_{L^{r_2}} \ge C \left(\int_{M^n}\left(\frac{1-\chi_\gamma(y)}{|y|^{n-\alpha}} \right)^{t_2}dy\right)^{\frac 1{t_2}} ||f||_{L^p} =:D_2,$$
where $\frac 1p+\frac 1{t_2}=1+\frac 1{r_2}$ with $t_2<\frac n{n-\alpha},\ r_2<0$ and
    $$D_2=\left(\int_{M^n}\left(\frac{1-\chi_\gamma(y)} {|y|^{n-\alpha}} \right)^{t_2}dy\right)^{\frac 1{t_2}}=C_3(n,\alpha)\gamma^{\frac{n-(n-\alpha)t_2}{t_2}}.$$
It  follows that $r_1<\frac{np}{n-\alpha p}<r_2$ and
\begin{equation}\label{2-6}
    m\{x: I_{\alpha}^2 f (x)<\tau\} \le \frac{ ||I_{\alpha}^1 f||^{r_2}_{L^{r_2}}}{\tau^{r_2}} \le \frac{C_4(n,\alpha)\gamma^{\frac{r_2[n-(n-\alpha)t_2]}{t_2}}} {\tau^{r_2}}.
\end{equation}
Bringing \eqref{2-5} and \eqref{2-6} into \eqref{HD-2-1}, we have
    $$m\{x: I_{\alpha} f(x)<2\tau\}\le \frac{C_2(n,\alpha)\gamma^{\frac{r_1[n-(n-\alpha)t_1]}{t_1}}} {\tau^{r_1}}+\frac{C_4(n,\alpha)\gamma^{\frac{r_2[n-(n-\alpha)t_2]}{t_2}}} {\tau^{r_2}}.$$

Choose $\gamma=\tau^{\frac p{p\alpha-n}}$. We have
\begin{equation*}
    \frac{pr_1}{p\alpha-n}\left[\frac n{t_1}-(n-\alpha)\right]-r_1 =\frac{pr_2}{p\alpha-n}\left[\frac n{t_2}-(n-\alpha)\right]-r_2=-\frac{np}{n-p\alpha}=-q.
\end{equation*}
 We thus  obtain \eqref{2-1-1} and complete the proof of Theorem \ref{prop-rough-ineq_R}.

\subsection{Sharp constant and the generalized Yamabe proble for $\alpha>n$}\label{sec-compare-constant-1}

\subsubsection{Best constant}

Recall, for $\alpha>n$,
\begin{equation*}
  Y_\alpha (M^n, g)=\inf_{f\in L^{2n/(n+\alpha)}(M^n)\backslash\{0\}} \frac{\left|\int_{M^n\times M^n}{f(x)f(y)}[G_x^g(y)]^{\frac{\alpha-n}{2-n}}dV_g(x)dV_g(y)\right|} {\|f\|^2_{L^{2n/(n+\alpha)}(M^n)}}.
\end{equation*}
It follows from Dou-Zhu's result \cite{DZ2}, that the infimum on  the standard sphere or flat plane is achieved, $Y_\alpha(\mathbb{S}^n, g)$ is  given by \eqref{8-5-1} and extremal functions  on flat plane $\mathbb{R}^n $ are given by \eqref{buble}.

We first give an upper bound estimate for the optimal constant $Y_\alpha (M^n, g)$.
\begin{prop}\label{prop-compare-constant-1}
If $\alpha>n$, then
    $$Y_\alpha (M^n, g) \leq Y_\alpha(\mathbb{S}^n, g_0).$$

\end{prop}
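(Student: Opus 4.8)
The plan is to mimic the test-function argument used for Proposition \ref{prop-compare-constant} and its Corollary, but now with the roles of $\sup$ and $\inf$ reversed, and with the kernel $[G_x^g(y)]^{\frac{\alpha-n}{2-n}}$ in place of $|x-y|_g^{\alpha-n}$. Fix a point $P\in M^n$ and work in conformal normal coordinates centered at $P$. Since $\alpha>n$, the exponent $\frac{\alpha-n}{2-n}<0$, so one must be careful with signs, but the leading behaviour of $G_x^g(y)$ near the pole is $|y|^{2-n}$, hence $[G_x^g(y)]^{\frac{\alpha-n}{2-n}}\sim |y-x|^{\alpha-n}$ to leading order, exactly as in the flat case. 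First I would take the test function $u=f_\lambda$ (the bubble \eqref{buble}) truncated to a small ball $B_\delta=B_\delta(P)$ and extended by zero, as in \eqref{local flat 1}. Plugging this into $J_{g,\alpha}$, the dominant term reproduces $Y_\alpha(\mathbb{S}^n,g_0)\|f_\lambda\|_{L^{2n/(n+\alpha)}}^2$ after rescaling $x=\lambda u$, $y=\lambda v$, with error terms coming from (i) the metric distortion $(1-\epsilon)|x-y|\le |x-y|_g\le(1+\epsilon)|x-y|$ on $B_\delta$, (ii) the truncation tails outside $B_\delta$, and (iii) the lower-order part of the Green's function expansion. Because we now seek an upper bound on an infimum, the inequalities run the opposite way to Proposition \ref{prop-compare-constant}: one wants the numerator to be \emph{small} relative to the denominator, so the metric distortion factors $(1\pm\epsilon)$ and the tail contributions should be arranged so that after sending $\lambda\to0$ and then $\epsilon,\delta\to0$ the quotient is $\le Y_\alpha(\mathbb{S}^n,g_0)+o(1)$.

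The key steps, in order, are: (1) record that in conformal normal coordinates $G_x^g(y)=|y|^{2-n}(1+O(|y|^{?}))$, so that $[G_x^g(y)]^{\frac{\alpha-n}{2-n}}=|y-x|^{\alpha-n}(1+o(1))$ uniformly on $B_\delta\times B_\delta$ as $\delta\to0$; for a locally conformally flat manifold this follows from Lemma \ref{expansion of Green function}, and in general from the more refined expansion alluded to in the footnote. (2) Compute $\|u\|_{L^{2n/(n+\alpha)}(M^n)}^2$: since $\alpha>n$, the bubble $f_\lambda$ has $f_\lambda^{2n/(n+\alpha)}$ with slowly decaying tail, but the relevant scaling identities are the same as in the reversed-HLS computation of Dou--Zhu \cite{DZ2}, giving $\|f_\lambda\|_{L^{2n/(n+\alpha)}(\mathbb{R}^n)}^2=c\lambda^{n-\alpha}(1+o(1))$ after accounting for the truncation. (3) Compute the numerator $\int_{B_\delta\times B_\delta}u(x)u(y)[G_x^g(y)]^{\frac{\alpha-n}{2-n}}\,dV_g\,dV_g$ and, using step (1) and the metric comparison, bound it above by $(1+C\epsilon)\int_{\mathbb{R}^n\times\mathbb{R}^n}f_\lambda(x)f_\lambda(y)|x-y|^{\alpha-n}\,dx\,dy+(\text{tail terms})$; the full-space integral equals $Y_\alpha(\mathbb{S}^n,g_0)\|f_\lambda\|^2$ by Lieb's formula \eqref{8-5-1}. (4) Estimate the tails $\mathbf{I},\mathbf{II}$ exactly as in \eqref{est-1}, using the identity $\int_{\mathbb{R}^n}f_\lambda(x)|x-y|^{\alpha-n}dx=Bf_\lambda^{\frac{n-\alpha}{n+\alpha}}(y)$; here for $\alpha>n$ the quantity $f_\lambda^{(n-\alpha)/(n+\alpha)}$ has positive exponent, so these tails are genuinely of order $(\delta/\lambda)^{-n}$ or smaller and are negligible compared with the main term after $\lambda\to0$. (5) Combine to get $Y_\alpha(M^n,g)\le J_{g,\alpha}(u)\le Y_\alpha(\mathbb{S}^n,g_0)+C\epsilon+C(\delta/\lambda)^{-n}$ (or the analogous one-sided bound), and send $\lambda\to0$, then $\epsilon\to0$, then $\delta\to0$.

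The main obstacle is step (3)–(4) bookkeeping in the reversed regime: because $[G_x^g(y)]^{\frac{\alpha-n}{2-n}}$ is a negative power and $f_\lambda$ is positive, every inequality must be taken in the direction that produces an \emph{upper} bound for the quotient, and one must verify that the lower-order term in the Green's function expansion (the constant $A$ and the $O''(r)$ remainder of Lemma \ref{expansion of Green function}) does \emph{not} help decrease the numerator in a way that would spoil the bound — i.e. that its contribution is $o(1)$ relative to $\lambda^{n-\alpha}$ rather than of definite favorable sign. Concretely, the term analogous to $A\cdot\int_{B_\delta\times B_\delta}|x-y|_g^{\alpha-2}f_\lambda f_\lambda$ in \eqref{local flat 2} must be shown to be negligible here (whereas in the $\alpha<n$ case it was the crucial positive gain); this is where the precise exponent counting $\lambda^{n-2}$ versus $\lambda^{n-\alpha}$, with $\alpha>n$, has to be checked carefully. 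Once that sign/size analysis is pinned down, the rest is a routine repetition of the cut-off estimates already carried out for Proposition \ref{prop-compare-constant}.
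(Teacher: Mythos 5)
Your proposal follows essentially the same route as the paper's proof: truncate the bubble $f_\lambda$ to a small ball, compute the flat-space quotient using $\int_{\mathbb{R}^n}f_\lambda(x)|x-y|^{\alpha-n}dx=Bf_\lambda^{\frac{n-\alpha}{n+\alpha}}(y)$ together with the reversed HLS inequality to control the truncation tails, and then transplant to $M^n$ in normal coordinates with $(1\pm\epsilon)$ metric distortion exactly as in Proposition \ref{prop-compare-constant}. One correction to your bookkeeping: by scale invariance $\|f_\lambda\|_{L^{2n/(n+\alpha)}(\mathbb{R}^n)}$ is independent of $\lambda$ (it is not $c\lambda^{(n-\alpha)/2}$), so the Green's-function correction of size $\lambda^{n-2}$ must be compared against a constant-order main term; it is still $o(1)$ for $n>2$, and since $A\ge 0$ the expansion \eqref{11-17-1} gives $[G_x^g(y)]^{\frac{\alpha-n}{2-n}}\le|x-y|_g^{\alpha-n}$, which is precisely the favorable direction for bounding the infimum from above, so your conclusion stands.
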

\begin{proof}
Take
    $$\tilde{f}=\begin{cases}f_\lambda(x), &\text{ in } B_\delta(0),\\
        0, &\text{ in }\mathbb{R}^n\backslash B_\delta(0),\end{cases}$$
where $\delta>0$ is a fixed constant to be determined later. Then, for small enough $\lambda$, $\tilde{f}\in L^p(\mathbb{R}^n)$ and
\begin{align}\label{est-1-1}
    &\int_{\mathbb{R}^n\times\mathbb{R}^n}\tilde{f}(x)\tilde{f}(y)|x-y|^{\alpha-n}dxdy \nonumber\\
    =&\int_{\mathbb{R}^n\times\mathbb{R}^n}f_\lambda(x)f_\lambda(y)|x-y|^{\alpha-n}dxdy \nonumber\\
    &-\int_{\mathbb{R}^n\times(\mathbb{R}^n\backslash B_\delta(0))}f_\lambda(x)f_\lambda(y)|x-y|^{\alpha-n}dxdy \nonumber\\
    &-\int_{B_\delta(0)\times(\mathbb{R}^n\backslash B_\delta(0))}f_\lambda(x)f_\lambda(y)|x-y|^{\alpha-n}dxdy \nonumber\\
    =&Y_\alpha(\mathbb{S}^n, g_0)\|f_\lambda\|_{L^{2n/(n+\alpha)}(\mathbb{R}^n)}^2 -\textbf{I}-\textbf{II},
\end{align}
where
\begin{align*}
    &\textbf{I}=\int_{\mathbb{R}^n\times(\mathbb{R}^n\backslash B_\delta(0))}f_\lambda(x)f_\lambda(y)|x-y|^{\alpha-n}dxdy,\\
    &\textbf{II}=\int_{B_\delta(0)\times(\mathbb{R}^n\backslash B_\delta(0))}f_\lambda(x)f_\lambda(y)|x-y|^{\alpha-n}dxdy.
\end{align*}

Note (see \cite{DZ2})
$$
\int_{\mathbb{R}^n}f_\lambda(x)|x-y|^{\alpha-n}dx=Bf_\lambda^{\frac{n-\alpha}{n+\alpha}}(y),
$$
where $B=\pi^{\frac n2}\frac{\Gamma(\alpha/ 2)}{\Gamma((n+\alpha)/2)}.$
We have
\begin{align*}
    \textbf{I}=&C\int_{\mathbb{R}^n\backslash B_\delta(0)}|f_\lambda|^{2n/(n+\alpha)}dx\\
    =&C\int_\delta^{+\infty}\left(\frac\lambda{\lambda^2+r^2}\right)^n r^{n-1}dr\\
    =&C\int_{\frac\delta\lambda}^{+\infty}(1+t^2)^{-n}t^{n-1}dt=O(\frac\delta\lambda)^{-n},\quad\text{as }\lambda\rightarrow 0.
\end{align*}
On the other hand, from the reversed HLS inequality, we know that \text{II} can be estimated  as
\begin{align*}
    &\textbf{II}\geq C \|f_\lambda\|_{L^{2n/(n+\alpha)}(B_\delta(0)}\|f_\lambda\|_{L^{2n/(n+\alpha)}(\mathbb{R}^n\backslash B_\delta(0))}=O(\frac\delta\lambda)^{-\frac{n+\alpha}2}\quad\text{as }\lambda\rightarrow 0.
\end{align*}

Note that  $\frac{n+\alpha}n>1$.
We have
\begin{equation*}
    \begin{split}
    \|f_\lambda\|_{L^{2n/(n+\alpha)}(\mathbb{R}^n)}^2= &\left(\int_{B_\delta(0)}|f_\lambda|^{2n/(n+\alpha)}dx +\int_{\mathbb{R}^n\backslash B_\delta(0)}|f_\lambda|^{2n/(n+\alpha)}dx\right)^{(n+\alpha)/n}\\
    \leq &\|f_\lambda\|_{L^{2n/(n+\alpha)}(B_\delta(0))}^2 +C\|f_\lambda\|_{L^{2n/(n+\alpha)}(B_\delta(0))}^{2-p} \|f_\lambda\|_{L^{2n/(n+\alpha)}(\mathbb{R}^n\backslash B_\delta(0))}^p.
    \end{split}
\end{equation*}
So, for small enough  $\lambda$,
\begin{equation}\label{est-2-1}
    \frac{\int_{\mathbb{R}^n\times\mathbb{R}^n}\tilde{f}(x) \tilde{f}(y)|x-y|^{\alpha-n}dxdy} {\|\tilde{f}\|_{L^p(\mathbb{R}^n)}^2}\leq Y_\alpha(\mathbb{S}^n, g_0)+C(\frac\delta\lambda)^{-n}.
\end{equation}
The rest of the argument can be carried out in the same way as in the proof of Proposition \ref{prop-compare-constant}.
\end{proof}

  To prove  Theorem \ref{yamabe} for $\alpha>n$, we first prove
%
\begin{prop}
If $Y_\alpha (M^n, g)< Y_\alpha(\mathbb{S}^n, g_0)$, then the infimum is attained.
\label{criterior_R}
\end{prop}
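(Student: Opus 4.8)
The plan is to run a concentration--compactness argument adapted to the fact that $p:=\frac{2n}{n+\alpha}\in(0,1)$; unlike in the case $\alpha<n$, no $\epsilon$--level inequality is available, and a blow-up analysis must replace it. Let $\{u_i\}\subset C^0(M^n)$ be a minimizing sequence for $Y:=Y_\alpha(M^n,g)$; we may take $u_i\ge0$ and normalize $\int_{M^n}u_i^p\,dV_g=1$, so that $B(u_i):=\int_{M^n}\int_{M^n}u_i(x)u_i(y)[G^g_x(y)]^{\frac{\alpha-n}{2-n}}\,dV_g(x)\,dV_g(y)=J_{g,\alpha}(u_i)\to Y$. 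As $M^n$ is compact, after passing to a subsequence the probability measures $d\mu_i:=u_i^p\,dV_g$ converge weakly-$*$ to a probability measure $\mu$ on $M^n$. The crux is the dichotomy: \emph{either} (i) $\mu=\delta_P$ for a single $P\in M^n$, \emph{or} (ii) $\{u_i\}$ is uniformly integrable on $(M^n,dV_g)$ (equivalently, the minimizing sequence has no blow-up point).

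The point behind the dichotomy is that, since $p<1$, a concentrating lump $b_i:=u_i\,\chi_{B(P,\delta_i)}$ (with $\delta_i\to0$) either has $\int_{M^n}b_i\,dV_g\to\infty$, or, if $\int_{M^n}b_i\,dV_g$ stays bounded, has $\int_{M^n}b_i^p\,dV_g\to0$ (by H\"older, since $|B(P,\delta_i)|\to0$). Writing $v_i:=u_i-b_i\ge0$ and using that the kernel $[G^g_x(y)]^{\frac{\alpha-n}{2-n}}$ is bounded, continuous and strictly positive off the diagonal, the nonnegative cross term in $B(u_i)=B(v_i)+2\int_{M^n}\!\int_{M^n}v_i(x)b_i(y)[G^g_x(y)]^{\frac{\alpha-n}{2-n}}\,dV_g(x)\,dV_g(y)+B(b_i)$ is $\ge c_0\big(\int_{M^n}b_i\,dV_g\big)\big(\int_{\{d(\cdot,P)\ge r_0\}}v_i\,dV_g\big)$ for fixed $c_0,r_0>0$. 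If the $L^1$--norm of the lump diverges while $v_i$ keeps positive $L^1$--mass away from $P$, this cross term diverges and $B(u_i)\to\infty$ --- impossible; if $\int_{M^n}b_i\,dV_g$ stays bounded, then $\|v_i\|_{L^p}\to1$ while $B(v_i)=B(u_i)-2\kappa+o(1)$ with $\kappa>0$, so $J_{g,\alpha}(v_i)\to Y-2\kappa<Y$ --- also impossible. Thus a minimizing sequence admits at most one blow-up point, which must absorb \emph{all} the $L^p$--mass (alternative (i)); otherwise $\{u_i\}$ is uniformly integrable (alternative (ii)). Turning this informal bookkeeping into a rigorous argument --- notably the ``at most one blow-up point'' step --- is where I expect the main difficulty to lie, precisely because $p<1$ deprives us of a weak $L^p$ topology and of an $\epsilon$--level inequality.

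Alternative (i) is ruled out by the hypothesis $Y<Y_\alpha(\mathbb{S}^n,g_0)$. If $\mu=\delta_P$, choose $\delta_i\to0$ with $\int_{B(P,\delta_i)}u_i^p\,dV_g\to1$, rescale $u_i$ about $P$ in conformal normal coordinates at scale $\delta_i$, and use $[G^g_x(y)]^{\frac{\alpha-n}{2-n}}=|x-y|^{\alpha-n}(1+o(1))$ near $P$: the rescaled functions converge to a nontrivial $\phi\in L^p(\mathbb{R}^n)$ with $\Big(\int_{\mathbb{R}^n}\!\int_{\mathbb{R}^n}\phi(x)\phi(y)|x-y|^{\alpha-n}\,dx\,dy\Big)\big/\|\phi\|^2_{L^p(\mathbb{R}^n)}\le\liminf_i J_{g,\alpha}(u_i)=Y$; by the sharp reversed HLS inequality of \cite{DZ2} the left-hand side is $\ge Y_\alpha(\mathbb{R}^n,g_E)=Y_\alpha(\mathbb{S}^n,g_0)$, forcing $Y\ge Y_\alpha(\mathbb{S}^n,g_0)$ --- a contradiction. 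So we are in alternative (ii).

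In alternative (ii), uniform integrability gives, along a subsequence, $u_i\rightharpoonup u_*\ge0$ weakly in $L^1(M^n)$; weak $L^1$ convergence and concavity of $t\mapsto t^p$ give $\|u_*\|_{L^p}^p\ge1$, so $u_*\neq o$. Since $[G^g_x(y)]^{\frac{\alpha-n}{2-n}}$ is bounded, continuous on $M^n\times M^n$ and vanishes on the diagonal, the family $\{I_{M^n,g,\alpha}u_i\}$ is equibounded and equicontinuous, hence converges uniformly (Arzel\`a--Ascoli) to $I_{M^n,g,\alpha}u_*$; combined with $u_i\rightharpoonup u_*$ in $L^1$ this yields $B(u_i)\to B(u_*):=\int_{M^n}\int_{M^n}u_*(x)u_*(y)[G^g_x(y)]^{\frac{\alpha-n}{2-n}}\,dV_g(x)\,dV_g(y)$. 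Therefore $J_{g,\alpha}(u_*)=B(u_*)/\|u_*\|_{L^p}^2\le B(u_*)=\lim_i B(u_i)=Y\le J_{g,\alpha}(u_*)$, so $J_{g,\alpha}(u_*)=Y$: the infimum is attained. Finally, standard regularity for the Euler--Lagrange equation $u_*^{\,p-1}=c\,I_{M^n,g,\alpha}u_*$, whose right-hand side is continuous and everywhere positive, shows that the minimizer may be chosen in $C^0(M^n)$.
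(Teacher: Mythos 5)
Your overall strategy is sound and it is genuinely different from the paper's. The paper does not attack the critical minimizing sequence directly: it first proves (Lemma \ref{subcritical}) that the subcritical infima $Y_{\alpha,p}(M^n,g)$, $p<\frac{2n}{n+\alpha}$, are attained, and then passes to the limit $p\to\frac{2n}{n+\alpha}$ along the subcritical minimizers $u_p$, running at each stage the same dichotomy you use: at most one concentration point, because the kernel is bounded below off the diagonal and, since $p<1$, a concentrating lump with bounded $L^1$-mass carries no $L^p$-mass; and a single concentration point is excluded by localizing the quotient and comparing with the Euclidean sharp constant from the reversed HLS inequality of \cite{DZ2}. Your one-stage argument buys a shorter proof and, in the compact alternative, a cleaner convergence mechanism (upper semicontinuity of the concave functional $u\mapsto\int u^p$ under weak convergence, plus uniform convergence of $I_{M^n,g,\alpha}u_i$ via Arzel\`a--Ascoli) than the paper's Fatou/reversed-HLS Cauchy-sequence argument. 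Note also that the paper's exclusion of the single blow-up point explicitly uses local conformal flatness to replace the kernel by $|x-y|^{\alpha-n}$; your version via conformal normal coordinates and the expansion of $G^g_x$ is in the same spirit and no less rigorous.

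Two steps need repair. First, in alternative (i) you assert that the rescaled functions ``converge to a nontrivial $\phi\in L^p(\mathbb{R}^n)$'' with quotient at most $Y$; this compactness is unproved and is of essentially the same nature as what the proposition is trying to establish. It is also unnecessary: once $\mu=\delta_P$, drop the (nonnegative) off-diagonal contributions to the numerator, transplant numerator and denominator over $B(P,r)$ into flat coordinates where the kernel is $(1+o(1))|x-y|^{\alpha-n}$ and the denominator tends to $1$, and bound the localized quotient from below by the infimum of $\int\!\!\int u(x)u(y)|x-y|^{\alpha-n}dxdy\,/\,\|u\|^2_{L^{2n/(n+\alpha)}(\mathbb{R}^n)}$ over nonnegative $u$, which equals $Y_\alpha(\mathbb{S}^n,g_0)$; no limit function is needed, and this is how the paper closes this case. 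Second, the negation of alternative (i) yields only $\sup_i\|u_i\|_{L^1}<\infty$, not uniform integrability, so Dunford--Pettis cannot be invoked as stated; you must allow the weak-$*$ limit of the measures $u_i\,dV_g$ to carry a singular part. This is harmless: by H\"older with $p<1$, mass concentrating on sets of vanishing measure contributes nothing to $\|u_i\|^p_{L^p}$ and only increases $B(u_i)$, so the absolutely continuous part $u_*$ still satisfies $\|u_*\|^p_{L^p}\ge1$ and $B(u_*)\le\lim_iB(u_i)=Y$ --- but the step should be written that way. With these two repairs your proof is complete.
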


The proof will base on a new blowup analysis.
For subcritical power  $p\in (0, \frac{2n}{n+\alpha})$, we consider
the infimum
\begin{equation}
Y_{\alpha,p} (M^n, g):=\inf_{u\in C^0(M^n)\setminus \{0\}, u\ge 0} J_{g,\alpha, p}(u).
\end{equation}
where
\begin{equation*}
    J_{g,\alpha,p}(u):=\frac{\int_{M^n}\int_{M^n}{u(x)u(y)}[G^{g}_x(y)]^{\frac {\alpha-n}{2-n}}dV_g(x)dV_g(y) }{\|u\|^2_{L^p(M^n)}}.
    \label{infimum-2}
\end{equation*}

\begin{lem}\label{subcritical}
For subcritical power  $p\in (0, \frac{2n}{n+\alpha})$. infimum $Y_{\alpha,p} (M^n, g)$ is attained.
\end{lem}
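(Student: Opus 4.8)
The plan is to run the direct method for $J_{g,\alpha,p}$; the one genuine difficulty is that $\alpha>n$ forces $p<\tfrac{2n}{n+\alpha}<1$, so $L^p$ is not locally convex and bounded minimizing sequences need not be weakly precompact --- subcriticality will be used twice to repair this. Write $K(x,y):=[G_x^g(y)]^{\frac{\alpha-n}{2-n}}$, $I_Kf:=\int_{M^n}K(\cdot,y)f(y)\,dV_g(y)$, and $D(u):=\langle u,I_Ku\rangle=\iint_{M^n\times M^n}u(x)u(y)K(x,y)\,dV_g\,dV_g$, so $J_{g,\alpha,p}(u)=D(u)/\|u\|_{L^p}^2$. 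The kernel $K$ is continuous and bounded on $M^n\times M^n$: for $n\ge 3$ it vanishes on the diagonal (there $G_x^g(y)\to+\infty$ while $\tfrac{\alpha-n}{2-n}<0$) and $K(x,y)\asymp|x-y|_g^{\alpha-n}$ near it, by the standard expansion $G_x^g(y)=c_n|x-y|_g^{2-n}(1+o(1))$; the case $n=1$ is analogous and simpler since then $K$ is bounded below by a positive constant. In all cases $K\ge 0$, $K(x,y)\ge c_0|x-y|_g^{\alpha-n}$, and $K(x,y)>0$ for $x\ne y$; $I_K$ maps $L^1(M^n)$ boundedly into $C(M^n)$ and bounded subsets of $L^1$ to equicontinuous families; $D$ is continuous on $L^1(M^n)$ and $D(u)=+\infty$ unless $u\in L^1$, so the minimization may be restricted to $u\in L^1(M^n)$, $u\ge 0$, $u\not\equiv 0$, on which class $J_{g,\alpha,p}$ is continuous and still has infimum $Y_{\alpha,p}(M^n,g)$ (density of $C^0$). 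For the record $Y_{\alpha,p}(M^n,g)>0$: for $u\ge 0$, $D(u)\ge c_0\langle u,I_\alpha u\rangle\ge c_0\|u\|_{L^{p_0}}\|I_\alpha u\|_{L^{p_0'}}\ge C\|u\|_{L^{p_0}}^2\ge C'\|u\|_{L^p}^2$, combining the reversed H\"older inequality with $p_0=\tfrac{2n}{n+\alpha}\in(\tfrac n\alpha,1)$, the reversed HLS inequality of Theorem~\ref{prop-rough-ineq_R} at the exponent $p_0$ --- whose paired exponent is exactly $p_0'=\tfrac{2n}{n-\alpha}$ --- and $p<p_0$ with $\mathrm{vol}_g(M^n)<\infty$.

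\emph{Step 1 (a uniform $L^1$ bound).} Let $u_i\ge 0$ be a minimizing sequence with $\|u_i\|_{L^p}=1$, so $D(u_i)\to Y_{\alpha,p}(M^n,g)$. For any $v\ge 0$ with $\|v\|_{L^p}=1$ and $\|v\|_{L^1}=m$, use $|x-y|_g^{\alpha-n}\ge\lambda^{\alpha-n}\chi_{\{|x-y|_g\ge\lambda\}}$ together with the H\"older bound $\int_{B(x,\lambda)}v\le\bigl(\int_{B(x,\lambda)}v^p\bigr)^{1/p}|B(x,\lambda)|^{1-1/p}\le C\lambda^{-n(1/p-1)}$; choosing $\lambda$ so that $Cm\lambda^{-n(1/p-1)}=m^2/2$ gives $D(v)\ge c\,m^{\,2-\frac{(\alpha-n)p}{n(1-p)}}$, and the exponent $2-\tfrac{(\alpha-n)p}{n(1-p)}$ is strictly positive \emph{precisely because} $p<\tfrac{2n}{n+\alpha}$. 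Since $D(u_i)$ is bounded, $M:=\sup_i\|u_i\|_{L^1}<\infty$; also $\|u_i\|_{L^1}\ge\|u_i\|_{L^p}\,\mathrm{vol}_g(M^n)^{1/p'}>0$ by the reversed H\"older inequality.

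\emph{Step 2 (no loss of mass, and conclusion).} I claim $\{u_i\}$ is uniformly integrable. If not, after passing to a subsequence there are $\varepsilon_0>0$ and measurable $E_i$ with $|E_i|\to 0$ and $\int_{E_i}u_i\ge\varepsilon_0$. Set $w_i=u_i\chi_{E_i}$, $\tilde u_i=u_i\chi_{M^n\setminus E_i}$, and (along a further subsequence) let $u_i\,dV_g\overset{\ast}{\rightharpoonup}\nu$ and $w_i\,dV_g\overset{\ast}{\rightharpoonup}\mu$, so $\tilde u_i\,dV_g\overset{\ast}{\rightharpoonup}\nu-\mu$ with $0\le\mu\le\nu$, $\mu(M^n)\ge\varepsilon_0$ and $\nu(M^n)=\lim\|u_i\|_{L^1}>0$. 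Since $\int_{E_i}u_i^p\le\bigl(\int_{E_i}u_i\bigr)^p|E_i|^{1-p}\le M^p|E_i|^{1-p}\to 0$, one has $\|\tilde u_i\|_{L^p}\to 1$; and since $K\in C(M^n\times M^n)$ and the masses are bounded, $D(\tilde u_i)\to\iint K\,d(\nu-\mu)\,d(\nu-\mu)=Y_{\alpha,p}(M^n,g)-2\!\int I_K\nu\,d\mu+\iint K\,d\mu\,d\mu$ (using $\iint K\,d\nu\,d\nu=\lim D(u_i)=Y_{\alpha,p}(M^n,g)$). Because $J_{g,\alpha,p}(\tilde u_i)\ge Y_{\alpha,p}(M^n,g)$ and $\|\tilde u_i\|_{L^p}\to 1$, letting $i\to\infty$ gives $\iint K\,d\mu\,d\mu\ge 2\int I_K\nu\,d\mu$; but $\mu\le\nu$ forces $I_K\nu\ge I_K\mu\ge 0$, hence $\int I_K\nu\,d\mu\ge\iint K\,d\mu\,d\mu$, so $\iint K\,d\mu\,d\mu=0$ and $\int I_K\nu\,d\mu=0$. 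Thus $I_K\nu$ vanishes at some point of $\mathrm{supp}\,\mu$; for $n=1$ this is impossible ($I_K\nu\ge c_0\nu(M^n)>0$), while for $n\ge 3$ it forces $\nu$ to be a single Dirac mass $b\delta_{x_0}$ with $b>0$ --- but then $u_i\,dV_g\overset{\ast}{\rightharpoonup}b\delta_{x_0}$, so $\int_{M^n\setminus B(x_0,\rho)}u_i\to 0$ for each $\rho>0$, whence $\int u_i^p\le M^p(C\rho^n)^{1-p}+o(1)$ and letting $\rho\to 0$ yields $1\le 0$, absurd. Hence $\{u_i\}$ is uniformly integrable, and by Dunford--Pettis $u_i\rightharpoonup u_*$ weakly in $L^1(M^n)$ along a subsequence, $u_*\ge 0$. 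Then $I_Ku_i\to I_Ku_*$ uniformly on $M^n$, so $D(u_i)\to D(u_*)=Y_{\alpha,p}(M^n,g)$; and by Mazur's lemma convex combinations $v_k:=\sum_j\lambda_j^{(k)}u_j\to u_*$ strongly in $L^1$, whence (concavity of $t\mapsto t^p$ and $\int u_j^p=1$) $\int u_*^p=\lim_k\int v_k^p\ge\lim_k\sum_j\lambda_j^{(k)}\int u_j^p=1$; in particular $u_*\not\equiv 0$. Therefore $J_{g,\alpha,p}(u_*)=D(u_*)/\|u_*\|_{L^p}^2\le Y_{\alpha,p}(M^n,g)$, and since $J_{g,\alpha,p}(u_*)\ge Y_{\alpha,p}(M^n,g)$ as well, the infimum is attained at $u_*$.

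\emph{Main obstacle and a remark.} Step 2 is the heart of the matter: with $p<1$ there is no reflexivity, and recovering compactness requires spending subcriticality a second time (beyond the $L^1$-bound of Step~1) to preclude loss of mass. The delicate point is the energy accounting for the excised piece --- pinning down the sign of $D(u_i)-D(\tilde u_i)$ in the limit via the inequality $\mu\le\nu$ --- together with tracking the weak-$\ast$ limits of $w_i\,dV_g$ and $\tilde u_i\,dV_g$ along a common subsequence; this is a concentration-compactness argument of Lions type. Once $u_*$ is a minimizer, its continuity follows from the Euler--Lagrange relation $\Lambda u_*^{p-1}=I_Ku_*$ together with the smoothing property of $I_K$.
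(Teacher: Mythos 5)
Your Step 1 contains a fatal error: the H\"older inequality you invoke is stated in the wrong direction. For $0<p<1$ the power--mean (Jensen) inequality gives
\[
\Bigl(\frac{1}{|E|}\int_E v^p\Bigr)^{1/p}\;\le\;\frac{1}{|E|}\int_E v,
\qquad\text{i.e.}\qquad
\int_E v\;\ge\;\Bigl(\int_E v^p\Bigr)^{1/p}|E|^{1-1/p},
\]
which is the \emph{opposite} of your claim $\int_{B(x,\lambda)}v\le(\int_{B(x,\lambda)}v^p)^{1/p}|B(x,\lambda)|^{1-1/p}$. No bound of the form $\int_{B(x,\lambda)}v\le C(\lambda)\|v\|_{L^p(M^n)}$ can hold for $p<1$: take $v=A\chi_F$ with $F\subset B(x,\lambda)$, $|F|=\varepsilon$, $A=\varepsilon^{-1/p}$, so that $\|v\|_{L^p}=1$ while $\int_{B(x,\lambda)}v=\varepsilon^{1-1/p}\to\infty$. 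Consequently your derivation of $D(v)\ge c\,m^{\,2-\frac{(\alpha-n)p}{n(1-p)}}$, and hence of the uniform bound $M=\sup_i\|u_i\|_{L^1}<\infty$, collapses. This is not a cosmetic issue: $M<\infty$ is load-bearing for everything in Step 2 --- it is used to show $\int_{E_i}u_i^p\to0$, to apply Dunford--Pettis, to pass from pointwise to uniform convergence of $I_Ku_i$, and in the final $\rho\to0$ contradiction. (The inequality $D(v)\ge c\,m^{\beta}$ with $\beta=2-\frac{(\alpha-n)p}{n(1-p)}>0$ is in fact plausibly true --- it is sharp on characteristic functions of small balls --- but proving it requires something like a Riesz-rearrangement or layer-cake argument, or exploiting the cross terms in $D$; the local H\"older step you wrote down cannot deliver it.)

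For comparison, the paper sidesteps the need for an unconditional $L^1$ bound entirely: it runs a dichotomy on the number of concentration points of $\int_{B_r(\cdot)}u_i^p$. With two concentration points, the off-diagonal positivity of the kernel gives $I_{g,\alpha}u_i\ge C$ everywhere, and boundedness of the energy then forces $I_{g,\alpha}u_i$ to stay bounded on a set of positive measure, which \emph{a posteriori} yields $\|u_i\|_{L^1}\le C$ and compactness; with a single concentration point, H\"older (in the correct direction, since $p<\tfrac{2n}{n+\alpha}$) forces $\|u_i\|_{L^{2n/(n+\alpha)}}\to\infty$, contradicting the bound supplied by the reversed HLS inequality of Theorem \ref{prop-rough-ineq_R}. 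Your Step 2 is a genuinely different and attractive Lions-type measure-theoretic argument, and the energy accounting there (the inequality $\mu\le\nu$ forcing $\iint K\,d\mu\,d\mu=0$) is correct as far as it goes; but to make the proof stand you must either repair Step 1 by a different mechanism or replace it with a dichotomy of the paper's kind.
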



\begin{proof}The lemma could be proved via establishing certain compactness embedding for $\alpha>n$, which is not known. To circumnavigate this difficulty, we here use a new blowup type argument. The main difference between our new blowup analysis with the traditional one is that: our argument is a global one since we do not have a  local Sobolev type inequality (the classic concentratione compactness, as well as Nash-Moser iteration are not available).

For fixed $p\in (0, \frac {2 n}{n +\alpha}),$ let $u_i$ be a minimizing positive sequence of $Y_{\alpha,p} (M^n, g)$ with $\|u\|^2_{L^p(M^n)}=1$. Easy to see that, up to further subsequence, $u_i \to u_* \in {L^p(M^n)}$  pointwise.

We consider two cases:

\noindent {\bf Case 1:} There are at least two  points on $M^n$, say $x_0, \ x_1$ and a universal positive constant $C>0$,  such that,  for any $r>0$, there is a subsequence of $u_i$, satisfying
$$
 \lim_{i \to \infty }\int_{B_r(x_0)} u_i^p >C; \   \  \  \  \  \   \lim_{i \to \infty}\int_{B_r(x_1)} u_i^p >C.
$$
Note $p<1$. The above inequality implies:
\begin{equation}\label{4-10-1+1}
 \lim_{i \to \infty}\int_{B_r(x_0)} u_i >0; \   \  \  \  \  \   \lim_{i \to \infty} \int_{B_r(x_1)} u_i >0.
\end{equation}

Denote
\begin{equation}
I_{g, \alpha}u_i(x)=\int_{{M}^n} u_i(y)[G^{g}_x(y)]^{\frac {\alpha-n}{2-n}})dV_g(y).
\label{I-int+1}
\end{equation}
We then know, due to \eqref{4-10-1+1} that there is a universal positive constant $C>0$, such that
\begin{equation}
I_{g, \alpha}u_i(\xi) \ge C \ \ \ \ \mbox{for \ \ \ all} \ \ \ \xi \in M^n.
\label{4-10-2+1}
\end{equation}
On the other hand, if $meas\{\xi\in M^n \ : I_{g, \alpha} u_i(\xi) \to \infty \ \mbox{as} \  i\to \infty\}=vol(M^n),$ then we have, using \eqref{4-10-1+1}, that $H_{\alpha, R}(u_i, u_i) \to \infty$, which contradicts the assumption that $u_i$ is a minimizing sequence. Thus $I_{g, \alpha}u_i(\xi)$ stays uniformly bounded in a set with positive measure. This implies: there is a constant $C_1>0$, such that
\begin{equation}
\int_{M^n}u_i(\xi) dS_\xi \le C_1.
\label{4-10-3+1}
\end{equation}
From \eqref{4-10-3+1} we know that sequence $\{I_{g, \alpha}u_i(\xi)\}_{i=1}^\infty$ is uniformly bounded and equiv-continuous on $M^n$. Up to a subsequence, $I_{g, \alpha}u_i(x) \to L(x)\in C(M^n)$.

Using Fatou Lemma and the reversed Hardy-Littlewood-Sobolev inequality (see Dou and Zhu \cite{DZ2}), we have, up to a further subsequence,  that, for any positive integer $m>0$,
\begin{align*}
0&\ge\big(\lim_{i  \to \infty}\int_{M^n}|I_{g, \alpha}u_{i}-I_{g, \alpha}u_{i+m}|^{2n/(n-\alpha)}  \big)^{(n-\alpha)/2n}\\
&\ge C\big(\lim_{i  \to \infty}||u_{i}-u_{i+m}||^{2n/(n-\alpha)}_{L^{2n/(n+\alpha)}}\big)^{(n-\alpha)/2n}.
\end{align*}

Thus $||u_i-u_{i+m}||_{L^{2n/n+\alpha)}}\to 0$. This implies $||u_i-u_*||_{L^{2n/(n+\alpha)}}\to 0$.  Thus the infimum  is achieved by $u_*$.  Easy to see that $u_*>0$ every where on $M^n$.

We are left to rule out the following case.

\noindent{\bf Case 2}.  Single point blow up point:  There is only one point $x_0 \in M^n$, such that for any $r>0$, there is a subsequence of $u_i$, such that
\begin{equation}\label{energy_c+1}
\lim_{i \to \infty}\int_{B_r(x_0)} u_i^p=1.
\end{equation}
It follows from \eqref{energy_c+1} and H\"older inequality that
$$
 \lim_{i \to \infty}\int_{B_r(x_0)} u_i^{\frac{2n}{n+\alpha}} \to \infty.
$$
On the other hand,  $||I_{g, \alpha}u_i(x)||_{L^{\frac{2n}{n-\alpha}}}$ is bounded, thus $||u_i||_{L^{\frac {2n}{n+\alpha}}}$ is bounded via the reversed HLS inequality. Contradiction. Thus case 2 can not happen.

\end{proof}

Lemma \ref{subcritical}  yields that the infimum $Y_{\alpha,p} (M^n, g)$  is attained.  Let $u_p$ be a minimizer such that $ \|u\|_{L^p(M^n)}=1$. It can be proved that $u_p$ is smooth function (see, for example, \cite{Li2004}, or \cite{DZ1}). To complete the proof of Proposition 3.4,  we discuss two cases.

\noindent {\bf Case 1:} There are at least two  points on $M^n$, say $x_0, \ x_1$ and a universal positive constant $C>0$,  such that,  for any $r>0$, there is a subsequence of $u_p$, satisfying
$$
 \lim_{p\to \frac{2n}{n+\alpha}}\int_{B_r(x_0)} u_p^p >C; \   \  \  \  \  \   \lim_{p\to \frac{2n}{n+\alpha}}\int_{B_r(x_1)} u_p^p >C.
$$
Note $p<1$. The above inequality implies:
\begin{equation}\label{4-10-1}
 \lim_{p\to \frac{2n}{n+\alpha}}\int_{B_r(x_0)} u_p >0; \   \  \  \  \  \   \lim_{p\to \frac{2n}{n+\alpha}}\int_{B_r(x_1)} u_p >0.
\end{equation}
%
%
We then know, due to \eqref{4-10-1} that there is a universal positive constant $C_4>0$, such that
\begin{equation}
I_{g, \alpha}u_p(\xi) \ge C_4 \ \ \ \ \mbox{for \ \ \ all} \ \ \ \xi \in M^n.
\label{4-10-2}
\end{equation}
On the other hand, if $meas\{\xi\in M^n \ : I_{g, \alpha} u_p(\xi) \to \infty \ \mbox{as} \  i\to \infty\}=vol(M^n),$ then we have, using \eqref{4-10-1}, that $H_{g, \alpha}(u_p, u_p) \to \infty$, which contradicts the assumption that $u_p$ is a minimizing sequence. Thus $I_{g, \alpha}u_p(\xi)$ stays uniformly bounded in a set with positive measure. This implies: there is a constant $C_5>0$, such that
\begin{equation}
\int_{M^n}u_p(\xi) dS_\xi \le C_5.
\label{4-10-3}
\end{equation}
From \eqref{4-10-3} we know that sequence $\{I_{g, \alpha}u_p(\xi)\}_{i=1}^\infty$ is uniformly bounded and equiv-continuous on $M^n$. Up to a subsequence, $I_{\alpha, R}u_p(x) \to L(x)\in C(M^n)$.

Using Fatou Lemma and the reversed Hardy-Littlewood-Sobolev inequality (see Dou and Zhu \cite{DZ2}), we have, up to a further subsequence,  that,
\begin{align*}
0&\ge\big(\lim_{p_1, p_2 \to 2n/(n+\alpha)}\int_{M^n}|I_{g, \alpha}u_{p_1}-I_{g, \alpha}u_{p_2}|^{2n/(n-\alpha)}  \big)^{(n-\alpha)/2n}\\
&\ge C\big(\lim_{p_1, p_2 \to 2n/(n+\alpha)}||u_{p_1}-u_{p_2}||^{2n/(n-\alpha)}_{L^{2n/(n+\alpha)}}\big)^{(n-\alpha)/2n}.
\end{align*}

Thus $||u_i-u_{j}||_{L^{2n/n+\alpha)}}\to 0$. This implies $||u_i-u_\circ||_{L^{2n/(n+\alpha)}}\to 0$ for some $u_\circ$. Thus,
 up to a further subsequence, $u_i \to u_\circ \ge 0$ almost everywhere.  Dominate convergence theorem yields that $||u_\circ||_{L^{2n/(n+\alpha)}}=1.$
It follows, via Fatou Lemma, that $\lim_{i, j\to \infty}H_{\alpha, R}(u_i, u_j)
\ge H_{\alpha, R}(u_\circ, u_\circ).$
Thus the infimum  is achieved by $f_\circ \ge 0$.

%
%
%
%
%

\smallskip
Using energy condition, we will rule out

\noindent {\bf Case 2:} Single point blow up point:  There is only one point $x_0 \in M^n$, such that for any $r>0$, there is a subsequence of $u_p$, such that
\begin{equation}\label{energy_c}
\lim_{p\to \frac{2n}{n+\alpha}}\int_{B_r(x_0)} u_p^p=1.
\end{equation}
It follows \eqref{energy_c} that
$$
 \lim_{p\to \frac{2n}{n+\alpha}}\int_{B_r(x_0)} u_p >0.
$$
If there is another point $x_1 \ne x_0$, such that for small $r>0$,
$$
 \lim_{p\to \frac{2n}{n+\alpha}}\int_{B_r(x_1)} u_p >0.
$$
We then again can obtain the existence of minimizer using the above argument.

 Finally,  if for any $x\ne x_0$, , such that for small  $r< dist (x, x_0)$,
\begin{equation}\label{sep-19-1}
 \lim_{p\to \frac{2n}{n+\alpha}}\int_{B_r(x)} u_p =0,
\end{equation}
we shall show that in this case $ Y_\alpha (M^n, g)\ge Y_\alpha (\mathbb{S}^n, g_0)$ which contradicts  to the energy constraint $Y_\alpha (M^n, g)<Y_\alpha (\mathbb{S}^n, g_0)$.

In fact, from the assumption of one blowup point \eqref{energy_c} and \eqref{sep-19-1} (also notice that $\alpha>n$), we know that for small enough $r>0$,
$$
\lim_{p\to \frac{2n}{n+\alpha}} J_{g,\alpha,p}(u_p)=  \lim_{p\to \frac{2n}{n+\alpha}} \frac{\left|\int_{B_r(x_0)}\int_{B_r(x_0)}{u_p(x)u_p(y)}[G^{g}_x(y)]^{\frac {\alpha-n}{2-n}}dV_g(x)dV_g(y) \right|}{\|u_p\|^2_{L^p(B_r(x_0))}}.
$$
Since $M^n$ is locally conformally flat, we know
\begin{align*}
&  \lim_{p\to \frac{2n}{n+\alpha}} \frac{ \int_{B_r(x_0)}\int_{B_r(x_0)}{u_p(x)u_p(y)}[G^{g}_x(y)]^{\frac {\alpha-n}{2-n}}dV_g(x)dV_g(y) }{\|u_p\|^2_{L^p(B_r(x_0))}}
\\
&= \lim_{p\to \frac{2n}{n+\alpha}} \frac{\int_{B_r(x_0)}\int_{B_r(x_0)}{u_p(x)u_p(y)}  [G^{g}_x(y)]^{\frac {\alpha-n}{2-n}})dV_g(x)dV_g(y) }{\|u_p\|^2_{L^{2n/(n+\alpha)}(B_r(x_0))}}
\\
&= \lim_{p\to \frac{2n}{n+\alpha}} \frac{\int_{S_R}\int_{S_R}{u_p(x)u_p(y)}|x-y|^{ {\alpha-n}}dx d y }{\|u_p\|^2_{L^{2n/(n+\alpha)}(S_R)}}
\\
&\ge \inf_{u\in L^{2n/(n+\alpha)}(\mathbb{R}^n )\setminus\{0\}, u \ge 0 } \frac{\int_{\mathbb{R}^n}\int_{\mathbb{R}^n}{u(x)u(y)}|x-y|^{ {\alpha-n}}dx d y }{\|u\|^2_{L^{2n/(n+\alpha)}(\mathbb{R}^n)}}
\\
&=Y_\alpha(\mathbb{S}^n, g_0),
\end{align*}
where $S_R \in \mathbb{R}^n$ is the image of $B_r(x_0)\in M^n$ under a conformal map from a local chart containingin  $B_r(x_0)$ to $ \mathbb{R}^n$.

We hereby complete the proof of Proposition \ref{criterior_R}.

\medskip

To  complete the proof  Theorem \ref{yamabe} for $\alpha>n$, we  are  left to show

\begin{prop} If $(M^n, g)$ is locally conformally flat, but  not conformally equivalent to the standard sphere $(\mathbb{S}^n ,g_0)$, then for $\alpha>n$,  $Y_\alpha (M^n, g)<Y_\alpha (\mathbb{S}^n, g_0).$
\label{conformal_flat}
\end{prop}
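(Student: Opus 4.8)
The plan is to mirror the proof of Proposition~\ref{Yamabe} (the case $\alpha<n$) line for line, the one decisive change being a reversal of sign. Since for $\alpha>n$ the number $Y_\alpha(M^n,g)$ is an \emph{infimum}, it suffices to produce a single competitor $u$ with $J_{g,\alpha}(u)<Y_\alpha(\mathbb{S}^n,g_0)$. Fix $P\in M^n$ and pass to a conformal normal coordinate centered at $P$; by the expansion \eqref{11-17-1}, $G^g_x(y)=r^{2-n}+A+O''(r)$, and since $(M^n,g)$ is not conformally equivalent to $(\mathbb{S}^n,g_0)$ the positive mass theorem gives $A>0$. Set $\beta:=\frac{\alpha-n}{2-n}$; because $\alpha-n>0$ and $2-n<0$ we have $\beta<0$. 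Writing $G^g_x(y)=r^{2-n}\bigl(1+(A+O''(r))r^{n-2}\bigr)$ and expanding $(1+s)^\beta=1+\beta s+O(s^2)$ with $s=(A+O''(r))r^{n-2}\to 0$,
\[
[G^g_x(y)]^{\beta}=r^{\alpha-n}+\beta A\,r^{\alpha-2}+O(r^{\alpha-1}),
\]
where now the coefficient $\beta A$ of $r^{\alpha-2}$ is \emph{negative}. This is precisely the reversed analogue of \eqref{local flat 2}: for $\alpha<n$ the mass term $\beta A\,r^{\alpha-2}$ is positive and pushes the supremum up, while for $\alpha>n$ it is negative and will push the infimum down.

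For small fixed $\delta$ take the truncated bubble $u=f_\lambda$ on $B_\delta$, $u=0$ on $M^n\setminus B_\delta$ (with $f_\lambda$ as in \eqref{buble}), and let $\lambda\to 0$. Inserting the expansion above into the numerator of $J_{g,\alpha}(u)$, the leading piece $\int\!\!\int_{B_\delta\times B_\delta}u(x)u(y)\,|x-y|^{\alpha-n}$ is estimated exactly as in the proof of Proposition~\ref{prop-compare-constant-1}, contributing at most $Y_\alpha(\mathbb{S}^n,g_0)\,\|u\|^2_{L^{2n/(n+\alpha)}(M^n)}+C(\delta/\lambda)^{-n}$, while the new piece obeys, by the same scaling change of variables used in \eqref{local flat 3},
\begin{align*}
\int_{B_\delta\times B_\delta}&|x-y|^{\alpha-2}f_\lambda(x)f_\lambda(y)\,dx\,dy\\
&=\lambda^{\,n-2}\!\!\int_{B_{\delta/\lambda}\times B_{\delta/\lambda}}\!\!|u-v|^{\alpha-2}(1+|u|^2)^{-\frac{n+\alpha}{2}}(1+|v|^2)^{-\frac{n+\alpha}{2}}\,du\,dv\ \ge\ c_1\lambda^{\,n-2},
\end{align*}
the limiting integral over $\mathbb{R}^n\times\mathbb{R}^n$ being a finite positive constant. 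As $\beta A<0$, the new piece is thus $\le -c_2A\lambda^{n-2}$; the remainder $O(r^{\alpha-1})$, the quadratic binomial terms $O(r^{\alpha+n-4})$, the conformal-normal-coordinate volume distortion $\sqrt{\det g}=1+O(r^N)$, and the truncation tails in $\|u\|^2_{L^{2n/(n+\alpha)}(M^n)}$ all contribute $o(\lambda^{n-2})$ after integration against the concentrating bubble (exactly as in Proposition~\ref{Yamabe}), and $\|u\|^2_{L^{2n/(n+\alpha)}(M^n)}$ stays between two positive constants. Therefore
\[
J_{g,\alpha}(u)\ \le\ Y_\alpha(\mathbb{S}^n,g_0)+C\lambda^{n}\delta^{-n}-c_2A\lambda^{n-2}+o(\lambda^{n-2})\ =\ Y_\alpha(\mathbb{S}^n,g_0)+\lambda^{n-2}\bigl(C\lambda^{2}\delta^{-n}-c_2A+o(1)\bigr),
\]
which is $<Y_\alpha(\mathbb{S}^n,g_0)$ once $\lambda$ is small enough relative to the fixed $\delta$, just as in \eqref{local flat 4}. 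Hence $Y_\alpha(M^n,g)<Y_\alpha(\mathbb{S}^n,g_0)$, which together with Proposition~\ref{criterior_R} completes the proof of Theorem~\ref{yamabe} for $\alpha>n$.

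I expect the only point requiring genuine care to be the error bookkeeping around $[G^g_x(y)]^{\beta}$: one must justify the binomial expansion uniformly for the \emph{negative} exponent $\beta$ on $B_\delta$, and then verify that, after integrating against $f_\lambda\,dV_g$, every term other than the displayed $\beta A\,r^{\alpha-2}$ contribution is genuinely $o(\lambda^{n-2})$. Here it is essential that, in conformal normal coordinates, the Green's function has the clean expansion \eqref{11-17-1} with no intermediate $r^{4-n},r^{6-n},\dots$ terms, which is exactly why such coordinates are used; with this in hand, everything else is a transcription of the $\alpha<n$ argument of Proposition~\ref{Yamabe} together with the truncation estimates already carried out in Proposition~\ref{prop-compare-constant-1}.
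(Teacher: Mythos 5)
Your proposal is correct and follows essentially the same route as the paper: the paper likewise takes the truncated bubble in conformal normal coordinates, invokes the positive mass theorem to get $A>0$, and uses the pointwise bound $(G_x(y))^{\frac{\alpha-n}{2-n}}\leq |x-y|_g^{\alpha-n}-A_0|x-y|_g^{\alpha-2}$ on $B_{\delta_0}(P)$ (which is exactly what your binomial expansion with negative exponent $\beta$ yields, with $A_0\sim|\beta|A$), followed by the same scaling estimate $\int_{B_\delta\times B_\delta}|x-y|_g^{\alpha-2}f_\lambda f_\lambda\gtrsim\lambda^{n-2}$ and the choice $\lambda\ll\delta$. Your explicit justification of the expansion of $[G^g_x(y)]^{\beta}$ is a slightly more detailed version of a step the paper simply asserts.
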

%
%

\begin{proof}
Let  $P\in M^n$ be a fixed point. In a conformal normal coordinate around $P$, $G_x^g(y)$ satisfies \eqref{11-17-1}. Further, since the manifold is not conformally equivalent to the standard sphere, $A>0$ by the positive mass theorem.

Since  $\alpha>n>2$, we know that there exist two positive constants $\delta_0, A_0$ such that
    $$(G_x(y))^{\frac{\alpha-n}{2-n}}\leq |x-y|_g^{\alpha-n}-A_0|x-y|_g^{\alpha-2},\quad \quad \forall\ x,y\in B_{\delta_0}(P).$$
In the sequel of the proof, we denote $B_{\delta}(P)$ as $B_{\delta}$.

For any fixed $\delta\in (0,\delta_0)$, take a specific test function as
$$
    u=\left\{\begin{array}{ll}
    f_\lambda(x), & B_{\delta},\\
    0, & M^n\backslash B_{\delta}.
    \end{array}\right.
$$
Similar to the computation in the proof of { Proposition \ref{prop-compare-constant}}, we can obtain
\begin{equation*}
\label{local flat 2 ge}
    \begin{split}
    Y_\alpha(M^n,g)\leq &J_{g,\alpha}(u)\\
    \leq & Y_\alpha(\mathbb{S}^n, g_0)+C\left(\frac\delta\lambda\right)^{-n}\\ &-A_0 \cdot \frac{\int_{B_\delta\times B_\delta}|x-y|_g^{\alpha-2}f_\lambda(x)f_\lambda(y)dxdy} {\|u\|_{L^{2n/(n+\alpha)}(M^n)}^2}.
    \end{split}
\end{equation*}
Since
\begin{equation*}\label{local flat 3 ge}
    \begin{split}
    &\int_{B_\delta\times B_\delta}|x-y|_g^{\alpha-2}f_\lambda(x)f_\lambda(y)dxdy\\ =&\lambda^{-(n+\alpha)}\int_{B_\delta\times B_\delta}|x-y|_g^{\alpha-2} \left(1+\frac{|x|^2}{\lambda^2}\right)^{-\frac{n+\alpha}2} \left(1+\frac{|y|^2}{\lambda^2}\right)^{-\frac{n+\alpha}2}dxdy\\
    =&\lambda^{n-2}\int_{B_{\delta/\lambda}\times B_{\delta/\lambda}}|u-v|^{\alpha-2} (1+|u|^2)^{-\frac{n+\alpha}2} (1+|v|^2)^{-\frac{n+\alpha}2} dudv\\
    \geq&C_1\lambda^{n-2},
    \end{split}
\end{equation*}
then
\begin{equation*}\label{local flat 4 ge}
    \begin{split}
    &-C\left(\frac\delta\lambda\right)^{-n}+A_0\frac{\int_{B_\delta\times B_\delta}|x-y|_g^{\alpha-2}f_\lambda(x)f_\lambda(y)dxdy} {\|u\|_{L^{2n/(n+\alpha)}(M^n)}^2}\\
    \geq&-C\left(\frac\delta\lambda\right)^{-n}+C_2A_0\lambda^{n-2} =\lambda^{n-2}(C_2A_0-C\lambda^2\delta^{-n})>0
    \end{split}
\end{equation*}
by choosing $\lambda$ much smaller than $\delta$. Therefore, we deduce that
$$
    Y_\alpha(M^n,g)<Y_\alpha(\mathbb{S}^n,g_0).
$$
\end{proof}

\begin{rem}
Due to the lack of local sharp  inequality for the  case of $\alpha>n$, it is not clear what is the form for the Aubin type $\epsilon-$inequality. It is also very interesting to analysz the blowup behavior of solutions to the equations with negative power, since  the concentration compactness principle does not hold, and the classical Nash-Moser type iteration does not work neither due to the lact of local sharp inequality.
\end{rem}

\begin{rem}
While we are working on this paper,  M. Zhu was informed by F. Hang and P. Yang of their recent work on $Q-$ curvature problem on 3 manifolds  \cite{HY2014-1}, where their estimates relies on the crucial sharp Sobolev inequality originally proved by Yang and Zhu \cite{YZ2004}. It seems that their argument is hard to be extended for operator with fractional order. The recent discovery of the reversed sharp  Hardy-Littlewood-Sobolev inequality \cite{DZ2} is the foundation for our current work for the case of $\alpha>n$. A unified approach for the Nirenberg problem for $\alpha<n$ was given in a recent paper \cite{JLX2014c}.
\end{rem}






\noindent{\bf ACKNOWLEDGMENT.} M. Zhu would like to thank F. Hang and  P. Yang for some conversations that shape this paper. He also thank them for bringing \cite{HY2014-1}-\cite{HY2015} to his attention; and thank T. Jin  for bringing reference \cite{DPV2011}  to his attention.  M. Zhu is partially supported by a collaboration grant from Simons Foundation. The work of Y. Han is partially supported by the National Natural Science Foundation of China (Grant No. 11201443 and 11101319).


\begin{thebibliography}{99}

\bibitem{Br2003} S. Brendle, Global existence and convergence for a higher order flow in conformal geometry, Ann. of Math. 158(1) (2003), 323-343.
\bibitem{BL1983} H. Brezis, E. Lieb, A relation between pointwise convergence of functions and convergence of functionals, Proc. Amer. Math. Soc. 88 (1983), 486-490.

\bibitem{CS2007} L. Caffarelli, L. Silvestre, An extension problem related to the fractional Laplacian, Comm. Partial. Diff. Equ. 32 (2007), 1245-1260.
    
\bibitem{CGY2002}S. -Y. Chang, M.  Gursky, P. C. Yang,  An equation of Monge-Ampere type in conformal geometry, and four-manifolds of positive Ricci curvature, Ann. of Math. 155(3) (2002), 709-787.
    
\bibitem{CHLYZ}W. Chen, R. Howard, E. Lutwak, D. Yang, G. Zhang, A generalized affine isoperimetric inequality,  J. Geom. Anal. 14(4)(2004), 597--612.

\bibitem{DPV2011} E.  Di Nezzaa;  G.  Palatuccia; E.  Valdinocia,   Hitchhiker's guide to the fractional Sobolev spaces, Bulletin des Sciences Mathe\'matiques, 136(5) (2012) , 521-573.
    
\bibitem{DM2008} Z. Djadli, A.  Malchiodi,  Existence of conformal metrics with constant Q-curvature, Ann. of Math. 168(3) (2008), 813-858.


\bibitem{DZ1}  J. Dou, M. Zhu, Sharp Hardy-Littlewood-Sobolev inequality on the upper half space, Int. Math. Res. Notices (2013), DOI: 10.1093/imrn/rnt213.
    
\bibitem{DZ2}  J. Dou,  M. Zhu,  Reversed Hardy-Littewood-Sobolev inequality,  Int. Math. Res. Notices (2014), DOI: 10.1093/imrn/rnu241.


\bibitem{GM2011} M. Gonz\'{a}lez, R. Mazzeo, Y. Sire, Singular solutions of fractional order conformal Laplacians, J. Geom. Anal. 22(2) (2012), 845-863.
    
\bibitem{GQ2011} M. Gonz\'{a}lez, J. Qing, Fractional conformal Laplacians and fractional Yamabe problems, Analysis \& PDE 6(7) (2013), 1535-1576.
    
\bibitem{GZ2003} C.R. Graham, M. Zworski, Scattering matrix in conformal geometry, Invent. Math. 152 (1) (2003), 89-118.

\bibitem{GLW2004} P.  Guan, C. S. Lin, G.  Wang, Application of the method of moving planes to conformally invariant equations, Math. Z. 247(1) (2004), 1-19.
    
\bibitem{GV2003} M.  Gursky, J.  Viaclovsky,  A fully nonlinear equation on four-manifolds with positive scalar curvature, J. Differential Geom. 63(1) (2003), 131-154.
    
\bibitem{GV2007} M.  Gursky, J.  Viaclovsky,  Prescribing symmetric functions of the eigenvalues of the Ricci tensor, Ann. of Math. 166(2) (2007), 475-531.
    
\bibitem{H2007} F.  Hang, On the higher order conformal covariant operators on the sphere, Commun. Contemp. Math. 9(3) (2007), 279-299.
\bibitem{HWY2009}  F. Hang, X. Wang, X. Yan, An integral equation in conformal geometry, Ann. Inst. H.  Poincar\'{e} Analyse Non Lin\'{e}aire 26 (2009),  1-21.

\bibitem{HY2004} F. Hang, P. Yang, The Sobolev inequality for Paneitz operator on three manifolds,  Calc. Var. Partial Differential Equations  21 (2004), 57-83.
    
\bibitem{HY2014-1} F. Hang, P. Yang,  $Q$ curvature on a class of 3 manifolds, Comm. Pure Appl. Math. (2014), DOI: 10.1002/cpa.21559.
    
\bibitem{HY2014-2} F. Hang, P. Yang,  Sign of Green's function of Paneitz operators and the Q curvature, Int. Math. Res. Notices (2014), DOI: 10.1093/imrn/rnu247.
    
\bibitem{HY2015} F. Hang, P. Yang,  Q curvature on a class of manifolds with dimension at least 5, arXiv: 1411.3926v1.

\bibitem{JLX2011a} T. Jin, Y. Y. Li, J.  Xiong, On a fractional Nirenberg problem, part I: blow up analysis and compactness of solutions, J. Eur. Math. Soc. 16(2014), 1111-1171.
    
\bibitem{JLX2011b} T. Jin, Y. Y. Li, J.  Xiong, On a fractional Nirenberg problem, part II: existence of solutions, Int. Math. Res. Notices (2013), DOI: 10.1093/imrn/rnt260.
    
\bibitem{JLX2014c} T. Jin, Y. Y. Li, J.  Xiong, The Nirenberg problem and its generalizations: A unified approach, arXiv:1411.5743vl.
    
\bibitem{JX2011} T. Jin, J. Xiong, A fractional Yamabe flow and some applications, arXiv: 1110. 5664v1
    
\bibitem{LP1987}J. M. Lee, T. H. Parker, The Yamabe problem, Bull. Amer. Math. Soc. 17(1) (1987), 37-91.
    
\bibitem{Li2004} Y. Y. Li, Remark on some conformally invariant integral equations: the method of moving spheres, J. Eur. Math. Soc.  6 (2004), 153-180.

\bibitem{LL2005} A. Li, Y.Y.  Li,  On some conformally invariant fully nonlinear equations. II. Liouville, Harnack and Yamabe, Acta Math. 195 (2005), 117-154.

\bibitem{Lieb1983} E. Lieb, Sharp constants in the Hardy-Littlewood-Sobolev and related inequalities, Ann. of Math. 118 (1983), 349-374.

\bibitem{LL2001}E. H. Lieb, M. Loss, Analysis, 2nd edition, Graduate Studies in Mathematics, 14. American Mathematical Society, Providence, R. I. 2001.

\bibitem {NZ1}Y. Ni, M.  Zhu,  Steady states for one dimensional curvature flows,  Commun. Contemp. Math. 10(2) (2008), 155--179.

\bibitem {NZ2}Y. Ni, M.  Zhu, One-dimensional conformal metric flow, Adv. Math. 218(4) (2008), 983--1011.
    
\bibitem {NZ3}Y. Ni, M.  Zhu, One-dimensional conformal metric  flow II, arXiv: 0710-4317v1.


\bibitem{YZ2004} P. Yang, M. Zhu, On the Paneitz energy on standard three sphere, ESAIM Control Optim. Calc. Var. 10 (2004), 211-223.
    \bibitem{Zhu2014} M. Zhu, On a prescribing integral curvature  equation, preprint, 2014. arXiv:1407.2967 



\end{thebibliography}
\end{document}